\pgfplotsset{compat=1.16}
\newtheorem{theorem}{Theorem}[section]
\newtheorem{lemma}[theorem]{Lemma}
\newtheorem{remark}[theorem]{Remark}
    \title{Pressure-robust staggered DG methods for the Navier-Stokes equations on general meshes\thanks{Submitted to the editors Month day, year.
    }}
    \author{
        Dohyun Kim\thanks{School of Mathematics and Computing (Computational Science and Engineering), Yonsei University, Seoul 03722, Korea
            ({\tt{kim92n@yonsei.ac.kr}}).
        }
    \and Lina Zhao\thanks{Department of Mathematics, City University of Hong Kong, Kowloon Tong, Hong Kong SAR, China
            ({\tt{linazha@cityu.edu.hk}}).
        }
    \and Eric Chung\thanks{Department of Mathematics, The Chinese University of Hong Kong, Hong Kong SAR, China
            ({\tt{tschung@math.cuhk.edu.hk}})
        }
    \and Eun-Jae Park\thanks{School of Mathematics and Computing (Computational Science and Engineering), Yonsei University, Seoul 03722, Korea
            ({\tt{ejpark@yonsei.ac.kr}}).
        }
    }
\newcommand{\dX}{\,\textup{d}\mathbf{x}}
\newcommand{\ds}{\,\textup{d}s}
\newcommand{\nn}{\mathbf{n}}
\newcommand{\tn}{\mathbf{t}}
\newcommand{\einner}[1]{\left\langle #1\right\rangle}
\newcommand{\infsup}[2]{\adjustlimits\inf_{#1} \sup_{#2}}
\newcommand{\avg}[1]{\ensuremath{\left\{\!\!\left\{ #1\right\}\!\!\right\}}}
\newcommand{\jump}[1]{\ensuremath{\left\llbracket #1\right\rrbracket}}
\newcommand{\norm}[1]{\left\|#1\right\|}
\newcommand{\tdiv}{\mathrm{div}\,}
\begin{document}

\date{}
\maketitle

\begin{abstract}
    In this paper, we design and analyze staggered discontinuous Galerkin methods of arbitrary polynomial orders for the stationary Navier-Stokes equations on polygonal meshes.
    The exact divergence-free condition for the velocity is satisfied without any postprocessing.
    The resulting method is pressure-robust so that the pressure approximation does not influence the velocity approximation.
    A new nonlinear convective term that earning non-negativity is proposed. The optimal convergence estimates for all the variables in $L^2$ norm are proved.
    Also, assuming that the rotational part of the forcing term is small enough, we are able to prove that the velocity error is independent of the Reynolds number and of the pressure.
    Furthermore, superconvergence can be achieved for velocity under a suitable projection.
    Numerical experiments are provided to validate the theoretical findings and demonstrate the performances of the proposed method.
\end{abstract}

\textbf{Keywords:} Staggered grid, Discontinuous Galerkin method, Divergence free, Pressure robustness, Navier-Stokes equations, General meshes, Superconvergence



\pagestyle{myheadings} \thispagestyle{plain} \markboth{Kim, Zhao,
Chung, and Park} {Pressure-robust SDG for NSE on general meshes}

\section{Introduction}\label{sec:intro}

The Navier-Stokes equations play an important role in fluid dynamics.
A number of finite element methods were proposed to solve the incompressible Navier-Stokes equations and accomplished many advances in the past half-century \cite{Girault1986,taylor1973,Franca1992,Cockburn05,Cockburn2009NS,Riviere14,Cesmelioglu2017,Sander18}.
Traditional inf-sup stable finite element methods do not give robust velocity approximation when large irrotational force is considered in general.
Such methods yield velocity approximations which depend on a pressure-dependent error contribution $\nu^{-1}\inf_{q\in Q_h}\|p-q_h\|_{L^2(\Omega)}$ where $\nu$ is the viscosity and $Q_h$ is the discrete pressure space.
The influence of the pressure-dependent error contribution is most pronouncing in the no-flow example which was first considered in \cite{Dorok1994}.
One of the main reason for the pressure-dependent error contribution is that the velocity approximation does not satisfy the incompressibility condition.
To satisfy the incompressibility condition, divergence-free finite element methods were proposed, see, for example \cite{Cockburn2007,Wang07,Guzman13,Guzman14,Fu2018,Wang2009}.
It is by no means trivial to construct finite element spaces that satisfy inf-sup condition and at the same time yield divergence free velocity. One of the approach exploited is to
enrich the velocity space without violating inf-sup condition. However, the construction of a suitable space to enrich velocity space is tricky. As an alternative, one can use divergence free velocity reconstruction operator to modify the right hand side \cite{Brennecke15,Linke16,Frerichs2020,Quiroz2020}.
When the nonlinear Navier-Stokes equations are considered, the reconstruction operator should be applied to the velocity for each nonlinear iteration and this leads to additional computational cost.
For the case of evolutionary incompressible Navier-Stokes equations, several pressure-robust numerical approaches were considered such as $H(\tdiv)$-conforming discontinuous Galerkin (DG) \cite{Schroeder2018, Han2020}, $H^1$-conforming mixed finite element method with grad-div stabilization \cite{Frutos2018, Arndt2014}, and continuous interior penalty methods \cite{Burman2007}.
While some of numerical simulations show optimal convergence behavior, error estimates provided in the aforementioned works are suboptimal.


In recent years, a large effort has been devoted to the design and
analysis of discretization schemes that apply to general polygonal
meshes. Among all the methods, we mention several polygonal
methods that have been designed for arbitrary polygonal orders
such as polygonal DG \cite{Cangiani2017} methods, virtual element
methods (VEM) \cite{veiga2013}, hybrid high-order (HHO)
\cite{pietro2015} methods, and weak Galerkin (WG) methods
\cite{Wang14}. In the present work, we will devise a new approach
within the framework of staggered DG methods. Staggered DG methods
as a new generation of numerical schemes were firstly introduced
by Chung and Engquist for wave propagation on triangular meshes
\cite{chung2006}. Since then it has been successfully applied for
various problems
\cite{chung2009,ChungLee11,chung2013,Kim13,chung2013b,cheung2015,ChungDu17,ChungQiu17}.
Recently, Zhao and Park \cite{zhao2018} extended this method to
general polygonal meshes for the Poisson equation. Then, a high
order staggered DG method for general second-order elliptic
problems is developed in \cite{Zhao2020e}, and it is applied to
various physical problems arising from practical applications
\cite{Zhao2019,Zhao2020e,dohyun2020,Zhao2020d,Zhao2020c,Zhao2020b,Zhao2020a,Zhao2021}.
For a pressure-robust method, Zhao et al. \cite{Zhao2020arxiv}
proposed a lowest-order pressure-robust staggered DG method for
the Stokes equations. They used a reconstruction operator based on
an $H$(div) conforming function space on polygonal meshes.
However, its extension to high-order methods is not trivial.

The objective of this paper is to design and analyze a pressure
robust staggered DG method of arbitrary polynomial orders on
polygonal meshes for the Navier-Stokes problem. The discrete
formulation involves velocity gradient, velocity and pressure, and
the continuities for all the variables are staggered on the
inter-element boundaries in line with \cite{Zhao2020c}. The
staggered continuity naturally gives an inter-element flux term
which is free of stabilization parameters and meanwhile ensures
the inf-sup stability of the resulting bilinear forms. The
proposed method is pressure-robust without enriching the velocity
space nor introducing the divergence-preserving reconstruction
operator. To the best of the authors' knowledge, this is the first
pressure-robust method on polygonal meshes without enriching the
discrete spaces and without establishing reconstruction operators.
As such, the construction of the method is relatively simple by
using standard polynomial spaces and the extension to arbitrary
polynomial orders is straightforward. Another novel contribution
lies in the design of a new nonlinear convective term that
guarantees non-negativity. We prove that the resulting method
yields a divergence-free velocity approximation without any
postprocessing, which is particularly important for Navier-Stokes
equations. In addition, the unique solvability of the discrete
solution is proved under a smallness condition involving only the
solenoidal part of the body force. Importantly, the convergence
estimate for velocity is proved to be independent of the pressure
and of the viscosity. The staggered DG method designed herein
offers some unique features which makes it advantageous over other
polygonal methods: It is locally conservative over each
dual-element; superconvergence can be obtained;
it is stable without numerical flux nor penalty term; exact
divergence free condition is satisfied. It should be also
emphasized that developing a divergence free element on 3D is much
more difficult and our approach can be extended to 3D
straightforwardly. The current paper focuses on 2D to present the
core of the method and simplify the presentation.



The organization of the paper is as follows.
In Section~\ref{sec:sdg}, we introduce the pressure-robust staggered DG method on polygonal meshes and give some useful lemmas including the divergence-free property of the velocity approximation.
In Section~\ref{sec:unisolv}, we prove the existence and uniqueness of the nonlinear discrete problem using fixed-point argument.
Then \textit{a priori} error estimates for all variables are proved in Section~\ref{sec:pri}.
In Section~\ref{sec:num}, several numerical experiments are conducted to verify the pressure robustness and the optimality of the method.
We end in Section~\ref{sec:con} with some concluding remarks.

\section{Staggered discontinuous Galerkin method}\label{sec:sdg}
Let $\Omega\subset\mathbb{R}^2$ be a bounded simply connected polygonal domain with Lipschitz boundary $\partial\Omega$.
For given data $\bm{f}\in [L^2(\Omega)]^2$ and $\bm{g}\in [H^{1/2}(\partial\Omega)]^2$, the incompressible Navier-Stokes problem in the conservative form seeks the unknown velocity $\bm{u}$ and the pressure $p$ satisfying
\begin{equation}\label{eq:model-conserve}
    \begin{aligned}
        -\nu\Delta \bm{u}+\tdiv(\bm{u}\otimes\bm{u})+\nabla p&=\bm{f}&&\text{in }\Omega,\\
        \nabla\cdot\bm{u}&=0&&\text{in }\Omega,\\
        \bm{u}&=\bm{g}&&\text{on }\partial\Omega,\\
        \einner{p}_\Omega&=0.
    \end{aligned}
\end{equation}
Here, $\einner{p}_\Omega=\int_\Omega p\dX$ and $\nu>0$ is a real number representing the kinematic viscosity of the fluid.
We introduce an auxiliary variable $\bm{G}=\nu\nabla\bm{u}$, then \eqref{eq:model-conserve} can be recast into the following first order system
\begin{equation}\label{eq:model-conserve-with-G}
    \begin{aligned}
        \nu^{-1}\bm{G}-\nabla\bm{u}&=0,&&\text{in }\Omega,\\
        -\tdiv\bm{G}+\tdiv(\bm{u}\otimes\bm{u})+\nabla p&=\bm{f},&&\text{in }\Omega,\\
        \nabla\cdot\bm{u}&=0,&&\text{in }\Omega,\\
        \bm{u}&=\bm{g},&&\text{on }\partial\Omega,\\
        \einner{p}_\Omega&=0.
    \end{aligned}
\end{equation}
In the remainder of this paper, we assume $\bm{g}=\bm{0}$ for simplicity.

We denote the Sobolev space $W^{k,p}(\Omega)$ and we write $H^k(\Omega)$ when $p=2$.
Those spaces are equipped with norm $\norm{\cdot}_{W^{k,p}(\Omega)}$ and $\norm{\cdot}_{H^k(\Omega)}$.
$W^{k,p}_0(\Omega)$ and $H^k_0(\Omega)$ denote the closure of $C^\infty_c(\Omega)$ with respective norms.
Here, $C^\infty_c(\Omega)$ is the infinitely differentiable function spaces with compact support.
We also define
\begin{equation*}
    H(\tdiv;\Omega)=\{\bm{v}\in [L^2(\Omega)]^2:\nabla\cdot\bm{v}\in L^2(\Omega)\},\quad L^2_0(\Omega)=\{q\in L^2(\Omega):\einner{q}_\Omega=0\}.
\end{equation*}
In the sequel we use $C$ to denote a generic positive constant which may have different values at different occurrences.
The weak solution to \eqref{eq:model-conserve-with-G} is defined by
\begin{equation}
    \begin{aligned}
        (\nu^{-1}\bm{G}-\nabla\bm{u},\bm{H})&=0&&\forall \bm{H}\in [L^2(\Omega)]^{2\times 2},\\
        (\bm{G}-\bm{u}\otimes\bm{u}-p\bm{I},\nabla\bm{v})&=(\bm{f},\bm{v})&&\forall \bm{v}\in [H^1_0(\Omega)]^2,\\
        (\nabla\cdot\bm{u},q)&=0&&\forall q\in L^2_0(\Omega).
    \end{aligned}
\end{equation}



We can prove the following stability estimate proceeding similarly to \cite{Quiroz2020} and the proof is omitted for simplicity.
\begin{lemma}\label{lem:bound_by_f0}
    For $\bm{f}\in [L^2(\Omega)]^2$, let $\bm{f}=\bm{f}_0+\nabla\chi$ be the Helmholtz-Hodge decomposition with $(\bm{f}_0,\chi)\in H(\tdiv;\Omega)\times (H^1(\Omega)\cap L^2_0(\Omega))$. Further, $\bm{f}_0$ satisfies $\nabla \cdot \bm{f}_0=0$.
    If $(\bm{G},\bm{u},p)$ is the solution to \eqref{eq:model-conserve-with-G},
   then, there holds
    \begin{equation*}
        \norm{\bm{u}}_{H^1(\Omega)}\leq C\nu^{-1}\norm{\bm{f}_0}_{L^2(\Omega)}.
    \end{equation*}
    In other words, $\bm{u}$ is independent of the irrotational force $\nabla\chi$.
\end{lemma}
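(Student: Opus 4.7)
The plan is to derive an energy estimate by testing the weak formulation with the solution itself, and to exploit the fact that a solenoidal, zero-trace test function annihilates both the pressure gradient and the irrotational part of the force. Since $\bm{g} = \bm{0}$, we have $\bm{u} \in [H^1_0(\Omega)]^2$, so it is an admissible test function. First I would substitute the identity $\bm{G} = \nu\nabla\bm{u}$ coming from the first weak equation into the second, reducing the momentum equation to the standard form
\begin{equation*}
    \nu(\nabla\bm{u},\nabla\bm{v}) - (\bm{u}\otimes\bm{u},\nabla\bm{v}) - (p,\nabla\cdot\bm{v}) = (\bm{f},\bm{v}) \qquad \forall\, \bm{v}\in[H^1_0(\Omega)]^2.
\end{equation*}

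Next, I would set $\bm{v} = \bm{u}$ and dispose of the nonlinear, pressure, and irrotational terms one by one. The pressure contribution is immediate: $(p, \nabla\cdot\bm{u}) = 0$ by incompressibility. For the convective term, I would rewrite $(\bm{u}\otimes\bm{u},\nabla\bm{u}) = \tfrac{1}{2}\int_\Omega \bm{u}\cdot\nabla|\bm{u}|^2\dX$, and then integrate by parts to get $-\tfrac{1}{2}(\nabla\cdot\bm{u},|\bm{u}|^2) + \tfrac{1}{2}\langle |\bm{u}|^2,\bm{u}\cdot\bm{n}\rangle_{\partial\Omega}$, both of which vanish by $\nabla\cdot\bm{u}=0$ and the homogeneous boundary condition. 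The key step that isolates $\bm{f}_0$ is to use the Helmholtz-Hodge decomposition: since $\chi\in H^1(\Omega)$ and $\bm{u}\in [H^1_0(\Omega)]^2$ with $\nabla\cdot\bm{u}=0$, integration by parts yields
\begin{equation*}
    (\bm{f},\bm{u}) = (\bm{f}_0,\bm{u}) + (\nabla\chi,\bm{u}) = (\bm{f}_0,\bm{u}) - (\chi,\nabla\cdot\bm{u}) + \langle\chi,\bm{u}\cdot\bm{n}\rangle_{\partial\Omega} = (\bm{f}_0,\bm{u}).
\end{equation*}

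The remaining identity $\nu\|\nabla\bm{u}\|_{L^2(\Omega)}^2 = (\bm{f}_0,\bm{u})$ is closed by Cauchy--Schwarz and the Poincaré inequality on $[H^1_0(\Omega)]^2$, yielding $\|\nabla\bm{u}\|_{L^2(\Omega)} \leq C\nu^{-1}\|\bm{f}_0\|_{L^2(\Omega)}$, which together with Poincaré gives the claimed $H^1$ bound. I would not expect any genuine obstacle here, since the computation is standard once $\bm{u}$ is used as the test function; the only subtlety is to record that $\chi$ need only lie in $H^1(\Omega)$ (not $H^1_0$) for the boundary integral with $\bm{u}\cdot\bm{n}=\bm{0}$ to drop. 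The point of the lemma is really conceptual: at the continuous level the pressure-dependent and irrotational-force contributions are annihilated exactly because the velocity is pointwise divergence-free, and this is precisely the property the discrete scheme proposed in the paper is designed to mimic.
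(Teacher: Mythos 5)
Your argument is correct and is exactly the standard energy estimate the paper has in mind: the paper itself omits the proof, deferring to the cited reference, and that reference's reasoning is the same test-with-$\bm{u}$ computation in which incompressibility kills the pressure term, the divergence-free structure plus the homogeneous boundary condition kill the convective term, and integration by parts against the solenoidal, zero-trace velocity removes $\nabla\chi$, leaving $\nu\norm{\nabla\bm{u}}_{L^2(\Omega)}^2=(\bm{f}_0,\bm{u})$ and the bound via Cauchy--Schwarz and Poincar\'e. Your remark that $\chi$ only needs to be in $H^1(\Omega)$ (the boundary term vanishing through $\bm{u}\in[H^1_0(\Omega)]^2$, or equivalently $\bm{u}\cdot\nn=0$) is the right observation, and no gap remains.
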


We now introduce some basic notations regarding staggered meshes that will be exploited in the construction of the staggered DG method.
Let $\mathcal{S}$ be a star-shaped polygonal partition of $\Omega$, and the set of all the edges is called primal-edges and is denoted by $\mathcal{F}_{pr}$.
In addition, we use $\mathcal{F}_{pr}^i$ and $\mathcal{F}_{pr}^b$ to stand for the interior edges and boundary edges, respectively.
For each primal-element $T\in \mathcal{S}$, we select an interior point $\bm{x}$ and connect it to all the vertices of $T$, thereby sub-triangular grids are generated during this process.
The resulting triangulation is denoted as $\mathcal{T}_h$ and the edges generated during the subdivision process are called dual-edges and are denoted by $\mathcal{F}_{dl}$.
We rename the union of sub-triangles sharing the common vertex $\bm{x}$ as $S(\bm{x})$.
Here, we choose $\bm{x}$ to be an interior point of the kernel of $S(\bm{x})\in \mathcal{S}$ and we use $\mathcal{N}$ to represent the union of all interior points $\bm{x}$.
For each interior edge $e\in \mathcal{F}_{pr}^i$, we use $\mathcal{D}(e)$ to stand for the dual-mesh, which is the union of two triangles in $\mathcal{T}_h$ sharing the common edge $e$.
For each boundary edge $e\in \mathcal{F}_{pr}^b$, we use $\mathcal{D}(e)$ to denote a triangle in $\mathcal{T}_h$ having the edge $e$, see Figure~\ref{fig:mesh_exam} for an illustration.
For each edge $e\in \mathcal{F}_h=\mathcal{F}_{pr}\cup \mathcal{F}_{dl}$, we define a unique normal vector $\bm{n}$ and tangential vector $\bm{t}$ by the outward normal vector and counter-clockwise tangential vector of one of its neighboring element, respectively.
\begin{figure}\label{fig:mesh_exam}
    \centering
    \includegraphics[width=0.8\textwidth]{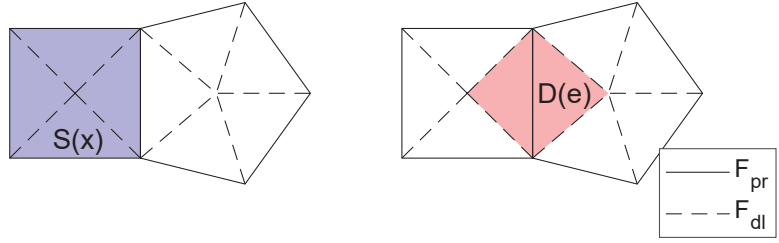}
    \caption{
        Schematic of primal- and dual-meshes.
        Solid lines are primal-edges $\mathcal{F}_{pr}$ and dashed lines are dual-edges $\mathcal{F}_{dl}$.
        A polygon surrounded by primal-edges is called primal-element $S(\bm{x})$ and quadrilaterals surrounded by dual-edges are called dual-element $\mathcal{D}(e)$.}
\end{figure}

For later analysis, we employ the general mesh regularity assumption (cf. \cite{Cangiani17,zhao2018}):
For every element $S(\bm{x})\in\mathcal{S}$ and every edge $e\in\partial S(\bm{x})$, it satisfies $h_e\geq \rho_E h_{S(\bm{x})}$ for a positive constant $\rho_E$, where $h_e$ is the length of edge $e$ and $h_{S(\bm{x})}$ is the diameter of $S(\bm{x})$.
Second, each element $S(\bm{x})$ in $\mathcal{S}$ is star-shaped with respect to a ball of radius $\geq \rho_B h_{S(\bm{x})}$, where $\rho_B$ is a positive constant.
We remark that the above assumptions ensure that the triangulation $\mathcal{T}_h$ is shape regular.
We employ the general mesh regularity assumption just for the sake of simplicity. In fact, our numerical results indicate that the proposed method allows elements with arbitrarily small edges and a rigorous analysis for a mesh with small edges will be present in our future work.

The jump $\jump{\cdot}$ and the average $\avg{\cdot}$ is defined by $\jump{v}_e=v|_{\tau_+}-v|_{\tau_-}$ and $\avg{v}_e=(v|_{\tau_+}+v|_{\tau_-})/2$ where $\tau_+$ and $\tau_-$ are two elements in $\mathcal{T}_h$ sharing the common edge $e\in \mathcal{F}_{pr}^i\cup \mathcal{F}_{dl}$.
For $e\in \mathcal{F}_{pr}^b$, we simply take $\jump{v}_e=\avg{v}_e=v|_{\tau_+}$.
The subscript $e$ will be omitted when there is no ambiguity.
We denote the $L^2$-inner product by $(f,g)_\tau=\int_\tau fg\dX$ for 2D and $\einner{f,g}_e=\int_efg\ds$ for 1D.
When $\bm{f}$ and $\bm{g}$ are vectors (or tensors), then $(\cdot,\cdot)_{\tau}$ and $\einner{\cdot,\cdot}_e$ are defined by their component-wise sum. That is, when $\bm{f},\bm{g}:\mathbb{R}^2\rightarrow\mathbb{R}^2$,
\begin{equation*}
    (\bm{f},\bm{g})_\tau=\int_\tau\bm{f}\cdot\bm{g}\dX,\quad\einner{\bm{f},\bm{g}}_e=\int_e\bm{f}\cdot\bm{g}\ds,
\end{equation*}
and when $\bm{f},\bm{g}:\mathbb{R}^2\rightarrow \mathbb{R}^{2\times 2}$,
\begin{equation*}
    (\bm{f},\bm{g})_\tau=\int_\tau \bm{f}:\bm{g}\dX,\quad\einner{\bm{f},\bm{g}}_e=\int_e \bm{f}:\bm{g}\ds.
\end{equation*}
Here, $A:B=\sum_{ij}A_{ij}B_{ij}$ is the Frobenius inner product.
The discrete spaces are defined based on the definition described above.
Let $(H_h,V_h,Q_h)$ be the discrete spaces defined by
\begin{align*}
    H_h&=\{{\bm H}\in [\mathbb{P}_k(\mathcal{T}_h)]^{2\times 2}:
        \jump{\bm{H}\nn}_e=\bm{0}\;\forall e\in\mathcal{F}_{pr}^0,\;\jump{\tn\cdot \bm H\nn}_e=0\;\forall e\in\mathcal{F}_{dl}\},\\
    V_h&=\{\bm{v}\in[\mathbb{P}_k(\mathcal{T}_h)]^2:
        \jump{\bm{v}\cdot\nn}_e=0\;\forall e\in\mathcal{F}_{dl}\},\\
    Q_h&=\{q\in \mathbb{P}_k(\mathcal{T}_h):\jump{q}_e=0\;\forall e\in\mathcal{F}_{pr}^0\},
\end{align*}
where $\mathbb{P}_k(\mathcal{T}_h)$ is a complete polynomial space of degree less than or equal to $k$ on each triangle $\tau\in\mathcal{T}_h$.
The discrete space $V_h$ is equipped with the norm
\begin{equation*}
    \norm{\bm{v}}_{h}^2=\norm{\nabla\bm{v}}_{L^2(\mathcal{T}_h)}^2 + \sum_{e\in\mathcal{F}_{pr}}h_e^{-1}\norm{\jump{\bm{v}}}_{L^2(e)}^2+\sum_{e\in\mathcal{F}_{dl}}h_e^{-1}\norm{\jump{\bm{v}\cdot\tn}}_{L^2(e)}^2.
\end{equation*}
Here, $\norm{\cdot}_{L^2(\mathcal{T}_h)}$ is the discrete $L^2$-norm on the triangulation $\mathcal{T}_h$.
We also define the discrete $L^4$-norm for $V_h$ by
\begin{equation}\label{eq:discL4norm}
    \norm{\bm{v}}_{0,4,h}^4=\norm{\bm{v}}_{L^4(\mathcal{T}_h)}^4
    +\sum_{e\in\mathcal{F}_h}h_e^{-1}\norm{\avg{\bm{v}}}_{L^4(e)}^4
\end{equation}
and discrete $H^1$-seminorm for $Q_h$ by
\begin{equation*}
    \norm{q}_{1,h}^2=\norm{\nabla q}_{L^2(\mathcal{T}_h)}^2+\sum_{e\in\mathcal{F}_{dl}}h_e^{-1}\norm{\jump{q}}_{L^2(e)}^2.
\end{equation*}
To impose the mean zero condition for the pressure variable, we introduce
\begin{equation*}
    Q_h^0=\{q\in Q_h:\einner{q}_\Omega=0\}.
\end{equation*}
Note that $\norm{\cdot}_{1,h}$ is a norm on $Q_h^0$.

In the following, we introduce the (discrete) trace inequality and the discrete Sobolev embedding theorem.

\begin{lemma}\label{lem:discIneqs}
    There exists $C$ independent of $h$ such that for all $e\in\mathcal{F}_h$ and $e\subset \partial \tau\in \mathcal{T}_h$ (cf. \cite{Karakashian98})
    \begin{align}
          \norm{\bm{v}}_{L^4(e)}\leq C(h_\tau^{-1/4}\norm{\bm{v}}_{L^4(\tau)}+h_\tau^{1/4}\norm{\nabla \bm{v}}_{L^2(\tau)})\quad\forall\bm{v}\in H^1(\tau).\label{eq:traceL4}
    \end{align}
    Also, we have (cf. \cite{Pietro2012})
    \begin{equation*}
        \norm{\bm{v}}_{L^2(e)}\leq Ch_\tau^{-1/2}\norm{\bm{v}}_{L^2(\tau)}\quad\forall\bm{v}\in V_h.\label{eq:discTrace}
    \end{equation*}

\end{lemma}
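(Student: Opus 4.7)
The plan is to establish both bounds by the standard reference-element-plus-scaling argument, since each statement is local on a single triangle $\tau\in\mathcal{T}_h$ with edge $e\subset\partial\tau$, and the mesh regularity assumption on $\mathcal{T}_h$ (inherited from the regularity of the primal partition $\mathcal{S}$) guarantees that the affine map $F:\hat\tau\to\tau$ from a fixed reference triangle $\hat\tau$ has $|\det DF|\sim h_\tau^2$, $|DF|\sim h_\tau$, and $|DF^{-1}|\sim h_\tau^{-1}$, with the images of the reference edges of length comparable to $h_\tau$. This gives the scaling relations $\|v\|_{L^p(\tau)}\sim h_\tau^{2/p}\|\hat v\|_{L^p(\hat\tau)}$, $\|v\|_{L^p(e)}\sim h_\tau^{1/p}\|\hat v\|_{L^p(\hat e)}$, and $\|\nabla v\|_{L^2(\tau)}\sim\|\hat\nabla\hat v\|_{L^2(\hat\tau)}$ that drive both proofs.

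For the second inequality, which applies only to $\bm{v}\in V_h$, I would use that on the reference triangle $[\mathbb{P}_k(\hat\tau)]^2$ is finite-dimensional, so all norms are equivalent and in particular
\begin{equation*}
\|\hat{\bm v}\|_{L^2(\hat e)}\leq C\|\hat{\bm v}\|_{L^2(\hat\tau)}\qquad\forall\,\hat{\bm v}\in[\mathbb{P}_k(\hat\tau)]^2.
\end{equation*}
Pulling this back through $F$ turns $\|\hat{\bm v}\|_{L^2(\hat e)}$ into $h_\tau^{-1/2}\|\bm v\|_{L^2(e)}$ and $\|\hat{\bm v}\|_{L^2(\hat\tau)}$ into $h_\tau^{-1}\|\bm v\|_{L^2(\tau)}$, so after clearing factors of $h_\tau$ one obtains exactly $\|\bm v\|_{L^2(e)}\leq Ch_\tau^{-1/2}\|\bm v\|_{L^2(\tau)}$.

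For the $L^4$ trace inequality, which must hold for arbitrary $\bm{v}\in H^1(\tau)$, I would invoke the continuous trace theorem on the reference triangle composed with the two-dimensional Sobolev embedding $H^{1/2}(\hat e)\hookrightarrow L^4(\hat e)$ (equivalently, the direct embedding $H^1(\hat\tau)\hookrightarrow L^q(\hat e)$ for every $q<\infty$), yielding
\begin{equation*}
\|\hat{\bm v}\|_{L^4(\hat e)}\leq C\bigl(\|\hat{\bm v}\|_{L^4(\hat\tau)}+\|\hat\nabla\hat{\bm v}\|_{L^2(\hat\tau)}\bigr).
\end{equation*}
Translating each term via the scaling relations gives $h_\tau^{-1/4}\|\bm v\|_{L^4(e)}$ on the left and $C\bigl(h_\tau^{-1/2}\|\bm v\|_{L^4(\tau)}+\|\nabla\bm v\|_{L^2(\tau)}\bigr)$ on the right; multiplying through by $h_\tau^{1/4}$ produces precisely the weights $h_\tau^{-1/4}$ and $h_\tau^{1/4}$ in the statement.

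The one point that requires care, and the reason the $L^4$ estimate is not perfectly scale-invariant, is the mixed-exponent character of its right-hand side: since the volume term is measured in $L^4$ but the gradient term in $L^2$, the two rescalings land at different powers of $h_\tau$, which is exactly why the two terms pick up $h_\tau^{-1/4}$ and $h_\tau^{+1/4}$ rather than a single common factor. Since both bounds are classical and provided with textbook references \cite{Karakashian98,Pietro2012}, I would, in the final write-up, simply cite them and skip the routine change-of-variables calculation.
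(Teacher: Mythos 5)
Your proposal is correct. The paper does not prove this lemma at all --- it simply cites \cite{Karakashian98} for the $L^4$ trace inequality and \cite{Pietro2012} for the discrete trace inequality --- and your reference-element scaling argument (norm equivalence on $[\mathbb{P}_k(\hat\tau)]^2$ for the polynomial trace bound, and the trace theorem with the embedding $H^{1/2}(\hat e)\hookrightarrow L^4(\hat e)$ for the $H^1$ case) is exactly the standard proof underlying those citations; the exponents $h_\tau^{-1/4}$ and $h_\tau^{1/4}$ come out as you computed, and the shape regularity of $\mathcal{T}_h$ needed for the scaling is guaranteed by the paper's mesh assumptions.
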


\begin{lemma}{\cite{pietro2010}}\label{lem:discSobolev}
    There exists $C$ independent of $h$ such that
    \begin{equation*}
        \norm{\bm{v}}_{0,4,h}\leq C\norm{\bm{v}}_h\quad\forall \bm{v}\in V_h
    \end{equation*}
    and
    \begin{equation}
        \norm{\bm{v}}_{L^q(\Omega)}\leq C\norm{\bm{v}}_h\quad 1\leq q\leq 6,\;\forall\bm{v}\in V_h.\label{eq:Lq}
    \end{equation}
\end{lemma}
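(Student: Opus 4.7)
The plan is to follow the standard strategy for discrete Sobolev embeddings in broken piecewise-polynomial spaces: build an $H^1$-conforming reconstruction of $\bm v\in V_h$ and reduce the claim to the continuous Sobolev embedding. Concretely, I would introduce an Oswald-type averaging operator $\mathcal A_h\colon V_h\to [H^1_0(\Omega)]^2\cap[\mathbb{P}_k(\mathcal T_h)]^2$, defined by prescribing each interior Lagrange nodal value as the arithmetic mean of the values taken by $\bm v$ on the triangles meeting that node, and zero at boundary nodes (so as to exploit the boundary contributions already present in $\|\cdot\|_h$). A classical local estimate of Karakashian--Pascal type then yields, for each $\tau\in\mathcal T_h$,
\begin{equation*}
\|\bm v-\mathcal A_h\bm v\|_{L^2(\tau)}^2+h_\tau^2\|\nabla(\bm v-\mathcal A_h\bm v)\|_{L^2(\tau)}^2\leq C\sum_{e\in\mathcal E_\tau^\star}h_e\|\jump{\bm v}\|_{L^2(e)}^2,
\end{equation*}
where $\mathcal E_\tau^\star$ collects the edges of $\tau$ and of its neighboring triangles. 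A key observation is that the staggered continuity $\jump{\bm v\cdot\nn}=0$ on $\mathcal F_{dl}$ removes the normal contributions on dual edges, so after summation the right-hand side is controlled precisely by $\|\bm v\|_h^2$.

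With this in hand, the bound \eqref{eq:Lq} follows by a triangle inequality. The continuous Sobolev embedding $H^1_0(\Omega)\hookrightarrow L^q(\Omega)$, valid for every finite $q$ in dimension two, gives $\|\mathcal A_h\bm v\|_{L^q(\Omega)}\leq C\|\nabla\mathcal A_h\bm v\|_{L^2(\Omega)}\leq C\|\bm v\|_h$ via the preceding estimate. The discrepancy $\bm v-\mathcal A_h\bm v$ is a piecewise polynomial of bounded degree, so a local inverse inequality $\|\bm w\|_{L^q(\tau)}\leq Ch_\tau^{2/q-1}\|\bm w\|_{L^2(\tau)}$, combined with the local $L^2$-bound above, yields $\|\bm v-\mathcal A_h\bm v\|_{L^q(\Omega)}\leq C\|\bm v\|_h$. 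The discrete $L^4$ bound $\|\bm v\|_{0,4,h}\leq C\|\bm v\|_h$ then follows by combining the $L^4(\Omega)$-estimate just proved with the trace inequality \eqref{eq:traceL4}: raising \eqref{eq:traceL4} to the fourth power and multiplying by $h_e^{-1}$ gives an edgewise bound of the form
\begin{equation*}
h_e^{-1}\|\avg{\bm v}\|_{L^4(e)}^4\leq C\bigl(h_\tau^{-2}\|\bm v\|_{L^4(\tau)}^4+\|\nabla\bm v\|_{L^2(\tau)}^4\bigr),
\end{equation*}
after which shape-regularity, summation over edges, the elementary inequality $\sum_\tau \|\nabla \bm v\|_{L^2(\tau)}^4\leq \|\nabla\bm v\|_{L^2(\mathcal T_h)}^4\leq \|\bm v\|_h^4$, and the $L^4(\Omega)$-estimate close the argument.

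The main obstacle is the construction and sharpness of the averaging operator in the staggered setting: standard Oswald operators do not automatically respect the asymmetric treatment of normal versus tangential components on dual edges, so one must verify that the approximation error involves only the jumps that actually appear in $\|\cdot\|_h$ (the full jumps on $\mathcal F_{pr}$ and the tangential jumps on $\mathcal F_{dl}$), with the homogeneous boundary condition enforced via the $\mathcal F_{pr}^b$ contributions. Once this operator is in place, every remaining step is a routine scaling or inverse-inequality manipulation.
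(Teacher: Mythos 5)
The paper does not actually prove this lemma; it simply cites \cite{pietro2010} (whose own argument is based on slicing/BV-type estimates rather than an averaging operator). So your proposal is judged on its own merits. The backbone — an Oswald/Karakashian–Pascal averaging operator $\mathcal A_h$, the observation that $\jump{\bm v\cdot\nn}=0$ on $\mathcal F_{dl}$ so that the full jumps entering the averaging estimate are controlled by $\norm{\bm v}_h$, zero boundary values absorbed by the $\mathcal F_{pr}^b$ terms, then the continuous embedding $H^1_0(\Omega)\hookrightarrow L^q(\Omega)$ plus a local inverse inequality for $\bm v-\mathcal A_h\bm v$ — is a standard and viable route, and it does deliver \eqref{eq:Lq}.

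The genuine gap is in your closing step for $\norm{\bm v}_{0,4,h}$. Your edgewise bound $h_e^{-1}\norm{\avg{\bm v}}_{L^4(e)}^4\leq C\bigl(h_\tau^{-2}\norm{\bm v}_{L^4(\tau)}^4+\norm{\nabla\bm v}_{L^2(\tau)}^4\bigr)$ is correct, but after summing over edges the term $\sum_\tau h_\tau^{-2}\norm{\bm v}_{L^4(\tau)}^4$ is \emph{not} controlled by $\norm{\bm v}_{L^4(\Omega)}^4$ (nor by $\norm{\bm v}_h^4$): the factor $h_\tau^{-2}$ is never compensated, so "the $L^4(\Omega)$-estimate closes the argument" fails as written. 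In fact, with the weight $h_e^{-1}$ exactly as printed in \eqref{eq:discL4norm} the asserted inequality cannot hold: take $\bm v\in V_h$ to be the continuous, globally $H^1_0$-conforming interpolant of a fixed nonzero smooth field; then $\norm{\bm v}_h$ stays bounded while $\sum_{e\in\mathcal F_h}h_e^{-1}\norm{\avg{\bm v}}_{L^4(e)}^4\sim\#\mathcal F_h\sim h^{-2}$ blows up. The resolution is that \eqref{eq:discL4norm} carries a sign typo: the weight actually used in the paper is $h_e$ (see the Appendix, where the edge contribution appears as $\bigl(\sum_e h_e\norm{\avg{\cdot}}_{L^4(e)}^4\bigr)^{1/4}$, and note that only the weight $h_e$ is compatible with the $O(h^{k+1})$ bound for $\norm{\bm v-J_h\bm v}_{0,4,h}$ in Lemma~\ref{lem:intperr}). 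With that weight your scheme does close — indeed for $\bm v\in V_h$ the edge terms follow even more directly from the discrete trace inequality $\norm{\avg{\bm v}}_{L^4(e)}\leq Ch_\tau^{-1/4}\norm{\bm v}_{L^4(\tau)}$ together with the already-established bound $\norm{\bm v}_{L^4(\Omega)}\leq C\norm{\bm v}_h$, without invoking \eqref{eq:traceL4}. You should state the corrected weight explicitly and rerun the summation; as written, the last step is invalid.
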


We can define the following degrees of freedom in the spirit of \cite{chung2009,Zhao2020c}.
\begin{lemma}[Degrees of freedom]\label{lem:dofs}
    Any function $\bm{H}\in H_h$ is uniquely determined by the following degrees of freedom: 
    \begin{itemize}
        \item[(XD1)]
            For $e\in \mathcal{F}_{pr}$, we have
            \begin{equation*}
                \Phi^{pr}_e(\bm{H}) := \einner{\bm{H}\nn,\bm{p}_k}_e\quad\forall\bm{p}_k\in[\mathbb{P}_k(e)]^2.
            \end{equation*}
        \item[(XD2)]
            For $e\in \mathcal{F}_{dl}$, we have
            \begin{equation*}
                \Phi^{dl}_e(\bm{H}) := \einner{\tn\cdot\bm{H}\nn, p_k}_e\quad\forall p_k\in\mathbb{P}_k(e).
            \end{equation*}
        \item[(XD3)]
            For each $\tau\in \mathcal{T}_h$, we can obtain
            \begin{equation*}
                \Phi_\tau(\bm{H}) := (\bm{H},\bm{p}_{k-1})_\tau\quad \forall \bm{p}_{k-1}\in [\mathbb{P}_{k-1}(\tau)]^{2\times 2}.
            \end{equation*}
    \end{itemize}
    Similarly, any function $\bm{v}\in V_h$ is uniquely determined by the following degrees of freedom:
    \begin{itemize}
        \item[(VD1)]
            For $e\in \mathcal{F}_{dl}$, we have
            \begin{equation*}
                \bm{\phi}_e(\bm{v}) := \einner{\bm{v}\cdot\nn,p_k}_e\quad \forall p_k\in \mathbb{P}_k(e).
            \end{equation*}
        \item[(VD2)]
            For each $\tau\in \mathcal{T}_h$, we can obtain
            \begin{equation*}
                \bm{\phi}_\tau(\bm{v}) := (\bm{v},\bm{p}_{k-1})_\tau\quad \forall \bm{p}_{k-1}\in [\mathbb{P}_{k-1}(\tau)]^2.
            \end{equation*}
    \end{itemize}
    Finally, any function $q\in Q_h$ is uniquely determined by
    \begin{itemize}
        \item[(SD1)]
            For $e\in \mathcal{F}_{pr}$, we have
            \begin{equation*}
                \phi_e(q) := \einner{ q,p_k}_e\quad \forall p_k\in \mathbb{P}_k(e).
            \end{equation*}
        \item[(SD2)]
            For each $\tau\in \mathcal{T}_h$, we can obtain
            \begin{equation*}
                \phi_\tau(q) := (q,p_{k-1})_\tau\quad \forall p_{k-1}\in \mathbb{P}_{k-1}(\tau).
            \end{equation*}
    \end{itemize}
\end{lemma}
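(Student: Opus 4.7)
My plan is to verify each of the three unisolvence statements via the standard two-step recipe: (i) match the number of DOFs with the dimension of the underlying local polynomial space on a generic triangle $\tau\in\mathcal{T}_h$, and (ii) show that when all DOFs on $\tau$ vanish the corresponding function must be identically zero on $\tau$. A useful structural fact is that the staggered subdivision gives each $\tau$ exactly one primal edge and two dual edges, so the counts line up cleanly: for $Q_h$, $(k+1)(k+2)/2=(k+1)+k(k+1)/2$; for $V_h$, $(k+1)(k+2)=2(k+1)+k(k+1)$; and for $H_h$, $2(k+1)(k+2)=2(k+1)+2(k+1)+2k(k+1)$. Once the dimensions match, it suffices to prove linear independence of the DOFs.

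I would treat $Q_h$ first. If all SD1, SD2 vanish, SD1 forces $q$ to vanish on the primal edge $e_{pr}\subset\partial\tau$, so one can factor $q=\ell_{pr}\tilde q$ with $\ell_{pr}\in\mathbb{P}_1(\tau)$ the equation of $e_{pr}$ (of one sign on $\tau$) and $\tilde q\in\mathbb{P}_{k-1}(\tau)$; testing SD2 against $\tilde q$ yields $\int_\tau \ell_{pr}\tilde q^{\,2}\dX=0$, which forces $\tilde q=0$. The vector case $V_h$ is analogous but requires factoring along two edges: VD1 gives $\bm v\cdot\nn=0$ on both dual edges, which meet at the interior point $\bm x$. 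After placing those edges along the coordinate axes through an affine map, the boundary conditions become $v^1|_{x=0}=0$ and $v^2|_{y=0}=0$, so $v^1=x\tilde v^{\,1}$ and $v^2=y\tilde v^{\,2}$ with $\tilde v^{\,i}\in\mathbb{P}_{k-1}(\tau)$; plugging $(\tilde v^{\,1},0)$ and $(0,\tilde v^{\,2})$ into VD2 and invoking positivity of $x,y$ on $\tau$ forces $\tilde v^{\,1}=\tilde v^{\,2}=0$.

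The main obstacle is the tensor case $H_h$, because the edge DOFs are of different types on primal and dual edges: XD1 controls the full vector $\bm H\nn$ on the (single) primal edge, whereas XD2 controls only the scalar $\tn\cdot\bm H\nn$ on each dual edge. My approach is to work in the same affine reference frame as for $V_h$ (with the dual edges along the coordinate axes) and to handle the four scalar entries of $\bm H$ in a definite order. The two XD2 conditions give $H^{12}=y\tilde H^{12}$ and $H^{21}=x\tilde H^{21}$; the two components of the XD1 condition $\bm H\nn=\bm 0$ on $e_{pr}$ then let me write the diagonal entries as $H^{11}=-y\tilde H^{12}+\ell_{pr}\tilde H^{11}$ and $H^{22}=-x\tilde H^{21}+\ell_{pr}\tilde H^{22}$ with $\tilde H^{ij}\in\mathbb{P}_{k-1}(\tau)$. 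Testing XD3 first against the off-diagonal tensors with entries $\tilde H^{12}$ and $\tilde H^{21}$ kills $\tilde H^{12}$ and $\tilde H^{21}$ by positivity of $y$ and $x$ respectively; testing then against the diagonal tensor $\mathrm{diag}(\tilde H^{11},\tilde H^{22})$ and using positivity of $\ell_{pr}$ on $\tau$ kills the remaining two unknowns.

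Finally, the jump conditions built into the definitions of $H_h$, $V_h$, $Q_h$ guarantee that the edge quantities appearing in XD1--XD2, VD1, and SD1 are single-valued across shared edges, so the boundary DOFs are globally well-defined and the local unisolvence statements above upgrade to the global uniqueness claim in the lemma.
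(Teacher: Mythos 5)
The paper itself does not prove this lemma; it simply asserts the degrees of freedom ``in the spirit of'' \cite{chung2009,Zhao2020c}, so your proposal has to stand on its own. Your overall strategy is the standard one and is essentially sound: per triangle each $\tau\in\mathcal{T}_h$ has one primal edge and two dual edges, the local DOF counts match $\dim\mathbb{P}_k$, $\dim[\mathbb{P}_k]^2$, $\dim[\mathbb{P}_k]^{2\times2}$, and kernel-triviality follows by factoring out the affine functions vanishing on the edges and testing the interior moments against suitably chosen functions, using that these affine weights have one sign on $\tau$. The scalar case $Q_h$ is handled correctly.

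There is, however, a genuine flaw in how you reduce the $V_h$ and $H_h$ cases to the coordinate frame. An affine map sending the two dual edges to the coordinate axes does not transform the conditions $\bm{v}\cdot\nn=0$ and $\tn\cdot\bm{H}\nn=0$ into conditions on single Cartesian components, because normals and tangents are not preserved under non-orthogonal affine maps (the DOFs are defined with the physical Euclidean normals); the claims ``$v^1|_{x=0}=0$, $v^2|_{y=0}=0$'' and ``$H^{12}=y\tilde H^{12}$, $H^{21}=x\tilde H^{21}$'' are literally valid only if the two dual edges happen to be orthogonal. The repair is standard: work in physical coordinates and expand $\bm{v}=(\bm{v}\cdot\nn_1)\bm{a}_1+(\bm{v}\cdot\nn_2)\bm{a}_2$ in the constant dual basis of $\{\nn_1,\nn_2\}$ (linearly independent since no two edges of a triangle are parallel), and for $H_h$ expand $\bm{H}$ with respect to the four rank-one matrices $\tn_1\nn_1^T$, $\tn_2\nn_2^T$, $\tn_{pr}\nn_{pr}^T$, $\nn_{pr}\nn_{pr}^T$, whose linear independence again follows because $e_{pr}$ is parallel to neither dual edge. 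Each coefficient field then factors as $\ell_e$ times a $\mathbb{P}_{k-1}$ function by the vanishing edge DOFs, and since $[\mathbb{P}_{k-1}(\tau)]^2$ and $[\mathbb{P}_{k-1}(\tau)]^{2\times2}$ are invariant under constant invertible recombinations of components, you can choose interior test functions isolating one coefficient at a time and conclude exactly as you intended. Separately, the per-triangle count alone does not give the global statement: interior primal (resp.\ dual) edge DOFs are shared by two triangles while the definitions of $Q_h$, $H_h$ (resp.\ $V_h$, $H_h$) impose the same number of jump constraints there, so either carry out this global bookkeeping, or argue existence directly by solving the local problems triangle by triangle from prescribed single-valued edge data and observing that equal edge moments of polynomial traces force the jump conditions; your closing paragraph only addresses well-definedness and uniqueness, not this existence half.
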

\noindent
Using the degrees of freedom defined in Lemma~\ref{lem:dofs}, we define the interpolation operators so that
\begin{equation*}
    \bm{\Phi}(\bm{H}-\Pi_h\bm{H})=0,\quad\bm{\phi}(\bm{v}-J_h\bm{v})=0,\quad\phi(q-I_hq)=0.
\end{equation*}
By the polynomial preserving property of the interpolation operators, we obtain the following lemma with minor modification of \cite{chung2009}.
\begin{lemma}\label{lem:intperr}
    There exists $C$ independent of $h$ such that
    \begin{equation*}
        \begin{aligned}
        \norm{\bm{H}-\Pi_h\bm{H}}_{L^2(\Omega)}&\leq Ch^{k+1}\norm{\bm{H}}_{H^{k+1}(\Omega)}&&\forall\bm{H}\in [H^{k+1}(\Omega)]^{2\times 2},\\
        \norm{\bm{v}-J_h\bm{v}}_{L^2(\Omega)}&\leq Ch^{k+1}\norm{\bm{v}}_{H^{k+1}(\Omega)}&&\forall\bm{v}\in [H^{k+1}(\Omega)]^2,\\
        \norm{\bm{v}-J_h\bm{v}}_{0,4,h}&\leq Ch^{k+1}\norm{\bm{v}}_{W^{k+1,4}(\Omega)}&&\forall\bm{v}\in [W^{k+1,4}(\Omega)]^2,\\
        \norm{q-I_hq}_{L^2(\Omega)}&\leq Ch^{k+1}\norm{q}_{H^{k+1}(\Omega)}&&\forall q\in H^{k+1}(\Omega).
        \end{aligned}
    \end{equation*}
\end{lemma}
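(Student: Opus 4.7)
The plan is to reduce the global bounds to local (per-triangle) estimates via unisolvence of the degrees of freedom in Lemma~\ref{lem:dofs}, and then apply a standard Bramble–Hilbert / scaling argument on each $\tau\in\mathcal{T}_h$. The key structural fact to exploit is that each degree of freedom in (XD1)--(XD3), (VD1)--(VD2), (SD1)--(SD2) is supported on a single edge or a single triangle, so the interpolants $\Pi_h\bm{H}$, $J_h\bm{v}$, $I_hq$ may be analyzed element by element even though the overall discrete spaces couple across edges. Thus the proof consists of three ingredients: (i) unisolvence and polynomial reproduction on each triangle, (ii) a scaling/Bramble–Hilbert estimate in $L^2$, and (iii) a trace argument for the $0,4,h$ norm.

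First, I would verify unisolvence locally on a triangle $\tau\in\mathcal{T}_h$. A dimension count matches the number of DOFs in (XD1)--(XD3), (VD1)--(VD2), (SD1)--(SD2) with $\dim[\mathbb{P}_k(\tau)]^{2\times 2}$, $\dim[\mathbb{P}_k(\tau)]^2$ and $\dim\mathbb{P}_k(\tau)$ respectively; the fact that vanishing of all DOFs forces the polynomial to be zero then follows by the same integration-by-parts trick as in \cite{chung2009} (edge DOFs kill the boundary moments, interior DOFs kill the interior moments against $\mathbb{P}_{k-1}$). Unisolvence implies the local interpolants reproduce $[\mathbb{P}_k(\tau)]^{2\times 2}$, $[\mathbb{P}_k(\tau)]^2$ and $\mathbb{P}_k(\tau)$ respectively.

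Second, for the $L^2$ error bounds, I would map to a reference triangle $\hat\tau$ under the usual affine equivalence. On $\hat\tau$ the local interpolation operator is bounded (each DOF is continuous with respect to, e.g., the $H^{k+1}(\hat\tau)$ norm by continuous embedding and trace), so by Deny–Lions
\begin{equation*}
\norm{\bm{H}-\hat\Pi\bm{H}}_{L^2(\hat\tau)} \leq C\,|\bm{H}|_{H^{k+1}(\hat\tau)},
\end{equation*}
and analogous inequalities for $\hat J$ and $\hat I$. Pulling back under the affine map and summing over $\tau\in\mathcal{T}_h$, using the mesh-regularity assumption stated before Lemma~\ref{lem:dofs}, gives the three $L^2$ bounds of the lemma with the advertised order $h^{k+1}$.

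Finally, for the discrete $L^4$-norm bound on $\bm{v}-J_h\bm{v}$, the element contribution $\norm{\bm{v}-J_h\bm{v}}_{L^4(\tau)}\leq Ch_\tau^{k+1}|\bm{v}|_{W^{k+1,4}(\tau)}$ follows by the same reference-element argument using the $W^{k+1,4}(\hat\tau)$ embedding. For the edge contribution in \eqref{eq:discL4norm}, I would apply the trace inequality \eqref{eq:traceL4} of Lemma~\ref{lem:discIneqs} to $\avg{\bm{v}-J_h\bm{v}}$ on each $e\subset\partial\tau$, pick up factors $h_\tau^{-1/4}\norm{\bm{v}-J_h\bm{v}}_{L^4(\tau)}+h_\tau^{1/4}\norm{\nabla(\bm{v}-J_h\bm{v})}_{L^2(\tau)}$, and combine with the corresponding $W^{1,2}$ interpolation estimate (again by scaling) to absorb the gradient term. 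The $h_e^{-1}$ weight in the edge sum cancels the right power of $h$ so that the final order is $h^{k+1}$, as claimed. The main obstacle I anticipate is the edge part of the $0,4,h$ estimate, since it mixes a trace inequality with a $W^{k+1,4}$ interpolation bound and requires keeping the scaling powers of $h_\tau$ straight; everything else is routine once local unisolvence and polynomial reproduction are in hand.
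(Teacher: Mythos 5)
Your reduction to local estimates is sound: the degrees of freedom in Lemma~\ref{lem:dofs} determine each interpolant triangle by triangle, unisolvence gives reproduction of $\mathbb{P}_k$, and the Bramble--Hilbert/scaling argument under the stated mesh regularity delivers the three $L^2$ bounds. This is exactly the ``polynomial preserving property \ldots minor modification of \cite{chung2009}'' route the paper invokes without writing out details, so for those parts you and the paper are aligned.

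The gap is in the edge part of the $\norm{\cdot}_{0,4,h}$ estimate, precisely the step you flagged. Your power count does not close. Taking the weight $h_e^{-1}$ literally as in \eqref{eq:discL4norm} and using \eqref{eq:traceL4} with $w=\bm{v}-J_h\bm{v}$ gives $h_e^{-1}\norm{\avg{w}}_{L^4(e)}^4\leq C\bigl(h_\tau^{-2}\norm{w}_{L^4(\tau)}^4+\norm{\nabla w}_{L^2(\tau)}^4\bigr)$; inserting $\norm{w}_{L^4(\tau)}\leq Ch_\tau^{k+1}|\bm{v}|_{W^{k+1,4}(\tau)}$ and $\norm{\nabla w}_{L^2(\tau)}\leq Ch_\tau^{k}|\bm{v}|_{H^{k+1}(\tau)}$ and summing yields, after the fourth root, only $O(h^{k+1/2})$ and $O(h^{k})$ respectively --- the $h_e^{-1}$ weight does not ``cancel the right power,'' and indeed with that weight the rate $h^{k+1}$ is unattainable (a single-element scaling computation shows the edge sum is generically of size $h^{4k+2}$, so Lemma~\ref{lem:discSobolev} would also fail). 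The weight is evidently meant to be $h_e$: that is what is actually used in the appendix, where the term $\bigl(\sum_{e\in\mathcal{F}_h}h_e\norm{\avg{\bm{e}_*\cdot\nn}}_{L^4(e)}^4\bigr)^{1/4}$ appears and is bounded by $\norm{\bm{e}_*}_{0,4,h}$. But even with the weight $h_e$, your plan of combining \eqref{eq:traceL4} with the ``$W^{1,2}$ interpolation estimate'' still loses half an order, since $h_e^{1/4}\,h_\tau^{1/4}\norm{\nabla w}_{L^2(\tau)}\lesssim h^{1/2}\,h^{k}|\bm{v}|_{H^{k+1}(\tau)}$. To reach $h^{k+1}$ you must retain $L^4$ integrability on the gradient as well: either insert H\"older, $\norm{\nabla w}_{L^2(\tau)}\leq C h_\tau^{1/2}\norm{\nabla w}_{L^4(\tau)}\leq C h_\tau^{k+1/2}|\bm{v}|_{W^{k+1,4}(\tau)}$, before applying \eqref{eq:traceL4}, or use a purely $L^4$ scaled trace inequality of the form $\norm{w}_{L^4(e)}^4\leq C\bigl(h_\tau^{-1}\norm{w}_{L^4(\tau)}^4+h_\tau^{3}\norm{\nabla w}_{L^4(\tau)}^4\bigr)$ together with the $W^{k+1,4}$ interpolation bounds. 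With that correction (and the corrected weight) the third estimate follows; the rest of your argument stands.
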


Then the staggered discontinuous Galerkin method for \eqref{eq:model-conserve} is defined as follows: Find $(\bm{G}_h,\bm{u}_h,p_h)\in H_h\times V_h\times Q_h^0$ such that
\begin{subequations}\label{eq:discform}
    \begin{align}
        \label{eq:discform1}
        \nu^{-1}(\bm{G}_h,\bm{H}) + B_h^*(\bm{u}_h,\bm{H})&=\sum_{e\in\mathcal{F}_{pr}^b}\einner{\bm{g},\bm{H}\nn}_e&&\forall \bm H\in H_h,\\
        \label{eq:discform2}
        B_h(\bm{G}_h,\bm{v})+N_h(\bm{u}_h;\bm{u}_h,\bm{v})+b_h^*(p_h,\bm{v})&=(\bm{f},\bm{v})
        +\sum_{e\in\mathcal{F}_h^b}\einner{|\bm{g}\cdot\nn|-\bm{g}\cdot\nn,\bm{g}\cdot\bm{v}}_e&&\forall\bm{v}\in V_h,\\
        \label{eq:discform3}
        b_h(\bm{u}_h,q)&=\sum_{e\in\mathcal{F}_{pr}^b}\einner{\bm{g}\cdot\nn, q}_e&&\forall q\in Q_h^0.
    \end{align}
\end{subequations}
Here,
\begin{align}
    \label{eq:Bh}
    B_h(\bm{H},\bm{v})
        &=\sum_{\tau\in\mathcal{T}_h}(\bm{H},\nabla\bm{v})_\tau
        -\sum_{e\in\mathcal{F}_{pr}}\einner{\bm{H}\nn,\jump{\bm{v}}}
        -\sum_{e\in\mathcal{F}_{dl}}\einner{\tn\cdot\bm{H}\nn,\jump{\bm{v}\cdot\tn}}_e,\\
    \label{eq:Bh*}
    B_h^*(\bm{v},\bm{H})
        &=\sum_{\tau\in\mathcal{T}_h}(\bm{v},\tdiv \bm{H})_\tau
        -\sum_{e\in\mathcal{F}_{dl}}\einner{\bm{v}\cdot\nn,\jump{\nn\cdot\bm{H}\nn}}_e,\\
    \label{eq:bh}
    b_h(\bm{v},q)
        &=-\sum_{\tau\in\mathcal{T}_h}(\bm{v},\nabla q)_\tau
        +\sum_{e\in\mathcal{F}_{dl}}\einner{\bm{v}\cdot\nn,\jump{q}}_e,\\
    \label{eq:bh*}
    b_h^*(q,\bm{v})
        &=-\sum_{\tau\in\mathcal{T}_h}(q, \nabla\cdot\bm{v})_\tau
        +\sum_{e\in\mathcal{F}_{pr}}\einner{q, \jump{\bm{v}\cdot\nn}}_e,\\
    \label{eq:Nh}
    N_h(\bm{w};\bm{\psi},\bm{v})
        &=-\sum_{\tau\in\mathcal{T}_h}(\bm{\psi}\otimes \bm{w}, \nabla \bm{v})_\tau
        +\sum_{e\in\mathcal{F}_h^0}\einner{\avg{\bm{w}\cdot\nn},\avg{\bm{\psi}}\cdot \jump{\bm{v}}}_e\nonumber\\
        &\quad+\sum_{e\in\mathcal{F}_h}\einner{|\avg{\bm{w}\cdot\nn}|,\jump{\bm{\psi}}\cdot \jump{\bm{v}}}_e,
\end{align}
for any $(H,\bm{v},q)\in H_h\times V_h\times Q_h^0$ and $(\bm{\psi},\bm{w})\in V_h\times V_h$.

In the remainder of this paper, we take $\bm{g}=\bm{0}$ for simplicity.
Note that the formulation is based on the conservative form of the Navier-Stokes equation and the upwind term is added to ensure the non-negativity, see Lemma~\ref{lem:Nh-nonnegativity}. 
Also, integration by parts and the definitions of the discrete spaces lead to the following discrete adjoint properties
\begin{equation}\label{eq:adjoint}
    \begin{aligned}
        B_h(\bm{H},\bm{v})&=-B_h^*(\bm{v},\bm{H})&&\forall (\bm{H},\bm{v})\in H_h\times V_h,\\
        b_h(\bm{v},q)&=-b_h^*(q,\bm{v})&&\forall (\bm{v},q)\in V_h\times Q_h.
    \end{aligned}
\end{equation}
By the definition of the discrete bilinear forms and the interpolation operators, we have the following lemma.
\begin{lemma}\label{lem:intp}
    Assume that $(\bm{G},\bm{u},p)\in ([H(\tdiv;\Omega)]^2\cap [H^{1/2+\epsilon}(\Omega)]^{2\times2})\times [H^1(\Omega)]^2\times (L^2(\Omega)\cap H^{1/2+\epsilon}(\Omega))$ with $\epsilon>0$.
    Then, the following equalities hold:
    \begin{equation*}
        \begin{aligned}
            B_h(\Pi_h\bm{G}-\bm{G},\bm{v})&=0&&\forall \bm{v}\in V_h,&b_h(J_h\bm{u}-\bm{u},q)&=0&&\forall q\in Q_h,\\
            B_h^*(J_h\bm{u}-\bm{u},\bm{H})&=0&&\forall \bm{H}\in H_h,&b_h^*(I_hp-p,\bm{v})&=0&&\forall \bm{v}\in V_h.
        \end{aligned}
    \end{equation*}
\end{lemma}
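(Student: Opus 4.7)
The plan is to observe that each of the four identities is an assertion that a certain bilinear form, applied to an interpolation error, vanishes. In every case, the bilinear form (after integration by parts has already been used in its definition) decomposes into a volume integral on each $\tau\in\mathcal{T}_h$ and edge integrals over either $\mathcal{F}_{pr}$ or $\mathcal{F}_{dl}$. The key point is that each test quantity paired against the interpolation error is a piecewise polynomial whose restriction to the relevant element or edge has degree at most $k$ (on edges) or $k-1$ (in volumes), and therefore matches exactly one of the moment functionals listed in Lemma~\ref{lem:dofs}. Since the canonical interpolants $\Pi_h$, $J_h$, $I_h$ are defined so that all such moments of the interpolation error vanish, each term drops out.

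I carry this out concretely for the first identity and then read off the other three. For $B_h(\Pi_h\bm{G}-\bm{G},\bm{v})$, the volume contribution $(\Pi_h\bm{G}-\bm{G},\nabla\bm{v})_\tau$ vanishes because $\nabla\bm{v}|_\tau\in[\mathbb{P}_{k-1}(\tau)]^{2\times 2}$ and this is exactly what (XD3) kills. On a primal edge $e$, the restriction of $\bm{v}$ from either neighbouring triangle is in $[\mathbb{P}_k(e)]^2$, hence $\jump{\bm{v}}|_e\in[\mathbb{P}_k(e)]^2$ and the term $\einner{(\Pi_h\bm{G}-\bm{G})\nn,\jump{\bm{v}}}_e$ vanishes by (XD1); on a dual edge $e$, likewise $\jump{\bm{v}\cdot\tn}|_e\in\mathbb{P}_k(e)$, so $\einner{\tn\cdot(\Pi_h\bm{G}-\bm{G})\nn,\jump{\bm{v}\cdot\tn}}_e=0$ by (XD2). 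Boundary primal edges require no separate treatment since $\jump{\bm{v}}=\bm{v}|_{\tau_+}$ is still in $[\mathbb{P}_k(e)]^2$.

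The remaining three identities follow the same recipe and need only that one identifies which polynomial space each test quantity lives in and which degree of freedom eliminates it: for $b_h(J_h\bm{u}-\bm{u},q)$, the volume term dies by (VD2) (since $\nabla q|_\tau\in[\mathbb{P}_{k-1}(\tau)]^2$) and the dual-edge term by (VD1) (since $\jump{q}|_e\in\mathbb{P}_k(e)$); for $B_h^*(J_h\bm{u}-\bm{u},\bm{H})$, (VD2) handles $(J_h\bm{u}-\bm{u},\tdiv\bm{H})_\tau$ because $\tdiv\bm{H}|_\tau\in[\mathbb{P}_{k-1}(\tau)]^2$, and (VD1) handles the dual-edge term because $\jump{\nn\cdot\bm{H}\nn}|_e\in\mathbb{P}_k(e)$; for $b_h^*(I_hp-p,\bm{v})$, (SD2) handles $(I_hp-p,\nabla\cdot\bm{v})_\tau$ and (SD1) handles the primal-edge term using $\jump{\bm{v}\cdot\nn}|_e\in\mathbb{P}_k(e)$. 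The only place where a mistake could sneak in is degree counting for the volume terms, where one must be sure that the differential operator strictly reduces polynomial degree; this is the reason (XD3), (VD2), (SD2) are only required to test against $\mathbb{P}_{k-1}$, and it is precisely this matching that makes the lemma work. No analytic estimates or nontrivial inequalities are needed—the whole statement is an algebraic/orthogonality computation.
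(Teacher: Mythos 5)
Your proof is correct and is exactly the argument the paper intends: the paper states this lemma without proof as an immediate consequence of the definitions, and your term-by-term matching of each volume/edge pairing in $B_h$, $B_h^*$, $b_h$, $b_h^*$ with the corresponding vanishing moments (XD1)--(XD3), (VD1)--(VD2), (SD1)--(SD2) of the interpolation errors is precisely that implicit argument, with the degree counting done correctly.
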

We have the discrete inf-sup condition for $B_h(\cdot,\cdot)$ and $b_h(\cdot,\cdot)$ (cf. \cite{chung2009,Zhao2020c}).
\begin{lemma}\label{lem:infsup}
    There exists $C_B$ and $C_b$ independent of $h$ such that
    \begin{equation*}
        0<C_B\leq \infsup{\bm{v}\in V_h}{\bm{H}\in H_h}\frac{B_h(\bm{H},\bm{v})}{\norm{\bm{H}}_{L^2(\Omega)}\norm{\bm{v}}_h},\quad 0<C_b\leq \infsup{q\in Q_h}{\bm{v}\in V_h}\frac{b_h^*(q,\bm{v})}{\norm{\bm{v}}_{L^2(\Omega)}\norm{q}_{1,h}}.
    \end{equation*}
\end{lemma}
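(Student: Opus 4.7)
The plan is to establish both inequalities constructively by exploiting the staggered degrees of freedom of Lemma~\ref{lem:dofs}. Each of the three contributions in $B_h(\bm{H},\bm{v})$ (the volume term and the two jump terms) is naturally paired with one of the three groups of DOFs (XD3), (XD1), (XD2) of $H_h$, and the same pairing structure holds between $b_h^*(q,\bm{v})$ (rewritten via the adjoint identity \eqref{eq:adjoint}) and the DOFs (VD2), (VD1) of $V_h$. This allows us to build an explicit ``witness'' $\bm{H}$ (resp.\ $\bm{v}$) whose pairing with the given function reproduces the square of the relevant norm.

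For the first inequality, given $\bm{v}\in V_h$ I would define $\bm{H}\in H_h$ by prescribing its DOFs as $\Phi_\tau(\bm{H})=(\nabla\bm{v},\bm{p}_{k-1})_\tau$, $\Phi^{pr}_e(\bm{H})=-h_e^{-1}\einner{\jump{\bm{v}},\bm{p}_k}_e$, and $\Phi^{dl}_e(\bm{H})=-h_e^{-1}\einner{\jump{\bm{v}\cdot\tn},p_k}_e$. These prescriptions are admissible since $\nabla\bm{v}|_\tau\in[\mathbb{P}_{k-1}(\tau)]^{2\times 2}$, $\jump{\bm{v}}|_e\in[\mathbb{P}_k(e)]^2$, and $\jump{\bm{v}\cdot\tn}|_e\in\mathbb{P}_k(e)$. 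Inserting this $\bm{H}$ into \eqref{eq:Bh} we obtain
\begin{equation*}
    B_h(\bm{H},\bm{v})=\norm{\nabla\bm{v}}_{L^2(\mathcal{T}_h)}^2+\sum_{e\in\mathcal{F}_{pr}}h_e^{-1}\norm{\jump{\bm{v}}}_{L^2(e)}^2+\sum_{e\in\mathcal{F}_{dl}}h_e^{-1}\norm{\jump{\bm{v}\cdot\tn}}_{L^2(e)}^2=\norm{\bm{v}}_h^2.
\end{equation*}
It then suffices to establish a DOF-stability bound $\norm{\bm{H}}_{L^2(\Omega)}\le C\norm{\bm{v}}_h$, after which $B_h(\bm{H},\bm{v})/(\norm{\bm{H}}_{L^2(\Omega)}\norm{\bm{v}}_h)\ge 1/C$ yields the inf-sup constant $C_B$. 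For the second inequality I would analogously define $\bm{v}\in V_h$ from $q\in Q_h$ via $\bm{\phi}_\tau(\bm{v})=(\nabla q,\bm{p}_{k-1})_\tau$ and $\bm{\phi}_e(\bm{v})=-h_e^{-1}\einner{\jump{q},p_k}_e$; using the adjoint rewriting $b_h^*(q,\bm{v})=\sum_\tau(\bm{v},\nabla q)_\tau-\sum_{e\in\mathcal{F}_{dl}}\einner{\bm{v}\cdot\nn,\jump{q}}_e$ one obtains $b_h^*(q,\bm{v})=\norm{q}_{1,h}^2$, after which a DOF-stability estimate in $V_h$ concludes the argument.

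The main technical step is the DOF-stability (norm-equivalence) estimate on each element: for a polynomial in $[\mathbb{P}_k(\tau)]^{2\times 2}$ or $[\mathbb{P}_k(\tau)]^2$, its $L^2(\tau)$ norm must be controlled by the appropriately weighted DOFs. On a reference triangle this is a finite-dimensional linear-algebra statement about the unisolvence map of Lemma~\ref{lem:dofs}, and the shape regularity implicit in the assumptions on $\mathcal{S}$ (the constants $\rho_E,\rho_B$) transfers this to arbitrary elements by a standard scaling argument in combination with the discrete trace inequality from Lemma~\ref{lem:discIneqs}. The only bookkeeping subtlety is keeping track of the $h_e^{-1}$ weights so that the resulting bound matches exactly $\norm{\cdot}_h$ and $\norm{\cdot}_{1,h}$; the rest of the argument follows the template of \cite{chung2009,Zhao2020c}.
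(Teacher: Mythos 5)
Your construction is correct: prescribing the degrees of freedom of Lemma~\ref{lem:dofs} so that $B_h(\bm{H},\bm{v})=\norm{\bm{v}}_h^2$ and $b_h^*(q,\bm{v})=\norm{q}_{1,h}^2$, and then closing with the scaled norm-equivalence bounds $\norm{\bm{H}}_{L^2(\Omega)}\leq C\norm{\bm{v}}_h$ and $\norm{\bm{v}}_{L^2(\Omega)}\leq C\norm{q}_{1,h}$, is exactly the standard staggered-DG argument. The paper itself does not prove this lemma but simply cites \cite{chung2009,Zhao2020c}, and your proof is essentially the one given in those references, so no further comparison is needed.
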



Integration by parts reveals the consistency of the nonlinear trilinear form $N_h(\cdot;\cdot,\cdot)$.
\begin{lemma}[Consistency]\label{lem:Nh-consistency}
    Let $\bm{u}\in [H^1_0(\Omega)]^2$ be the solution to \eqref{eq:model-conserve}. Then the following holds:
    \begin{equation*}
        N_h(\bm{u};\bm{u},\bm{v}_h)=(\tdiv(\bm{u}\otimes\bm{u}),\bm{v}_h)\quad\forall \bm{v}_h\in V_h.
    \end{equation*}
\end{lemma}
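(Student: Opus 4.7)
The plan is to exploit the fact that the strong solution $\bm{u}\in[H^1_0(\Omega)]^2$ is single-valued (has zero jumps) across every interior edge and vanishes on $\partial\Omega$, so that most of the consistency terms in $N_h$ either collapse or annihilate exactly the edge contributions produced by elementwise integration by parts of the volume piece.

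First I would dispose of the upwind stabilization. Since $\bm{u}$ is continuous in $\Omega$, $\jump{\bm{u}}_e = 0$ on every $e\in\mathcal{F}_h^0$; on boundary edges $\bm{u}|_e = 0$ so $\jump{\bm{u}}_e=0$ in our convention as well. Therefore the last sum in the definition of $N_h(\bm{u};\bm{u},\bm{v}_h)$ vanishes identically. For the same reason, $\avg{\bm{u}\cdot\bm{n}}=\bm{u}\cdot\bm{n}$ and $\avg{\bm{u}}=\bm{u}$ on every interior edge.

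Next I would integrate by parts elementwise on the volume term. For each $\tau\in\mathcal{T}_h$,
\begin{equation*}
    -(\bm{u}\otimes\bm{u},\nabla\bm{v}_h)_\tau
    = (\tdiv(\bm{u}\otimes\bm{u}),\bm{v}_h)_\tau
    - \einner{(\bm{u}\otimes\bm{u})\bm{n}_\tau,\bm{v}_h}_{\partial\tau}.
\end{equation*}
Summing over $\tau$, the boundary pieces collect edge by edge. On any $e\in\mathcal{F}_h^b$ the integrand vanishes because $\bm{u}=0$ there. On any $e\in\mathcal{F}_h^0$, using continuity of $\bm{u}\otimes\bm{u}$ and the standard DG identity, the two contributions from the neighbouring elements combine to
\begin{equation*}
    \einner{(\bm{u}\otimes\bm{u})\bm{n},\jump{\bm{v}_h}}_e
    = \einner{(\bm{u}\cdot\bm{n})\bm{u},\jump{\bm{v}_h}}_e
    = \einner{\avg{\bm{u}\cdot\bm{n}}\avg{\bm{u}},\jump{\bm{v}_h}}_e,
\end{equation*}
which is precisely the second term in the definition of $N_h$ with opposite sign. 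Adding everything back together, the two interior-edge sums cancel, leaving $N_h(\bm{u};\bm{u},\bm{v}_h) = \sum_\tau(\tdiv(\bm{u}\otimes\bm{u}),\bm{v}_h)_\tau = (\tdiv(\bm{u}\otimes\bm{u}),\bm{v}_h)$.

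There is no real obstacle in this proof; the only care required is a consistent choice of normal orientation on each edge and a correct identification of the identity $\sum_\tau\int_{\partial\tau} f\bm{n}_\tau\cdot \bm{v}_h = \sum_e\int_e f\bm{n}\cdot\jump{\bm{v}_h}$ that holds when $f$ is single-valued, which is exactly our situation because $\bm{u}\otimes\bm{u}$ inherits continuity from $\bm{u}\in[H^1_0(\Omega)]^2$. The fact that $\bm{v}_h\in V_h$ has no continuity constraint across primal edges and only a normal-continuity constraint across dual edges is irrelevant: the identity uses only the continuity of $\bm{u}\otimes\bm{u}$, and the jumps of $\bm{v}_h$ are absorbed into the $\avg{\cdot}\jump{\cdot}$ term of $N_h$.
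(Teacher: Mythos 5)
Your proof is correct and follows exactly the route the paper intends (the paper only remarks that ``integration by parts reveals the consistency'' and omits the details): the single-valuedness of the trace of $\bm{u}\in[H^1_0(\Omega)]^2$ kills the upwind term, and elementwise integration by parts of the volume term produces edge contributions $\einner{(\bm{u}\cdot\nn)\bm{u},\jump{\bm{v}_h}}_e$ that cancel the $\avg{\cdot}$--$\jump{\cdot}$ term of $N_h$, with boundary edges vanishing since $\bm{u}=\bm{0}$ there. No gaps beyond the orientation bookkeeping you already note.
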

In the next lemma, we state the non-negative property for $N_h(\cdot;\cdot,\cdot)$, which is crucial for the subsequent analysis.
\begin{lemma}[Non-negativity]\label{lem:Nh-nonnegativity}
    Let $\bm{w}\in H(\tdiv;\Omega)$ with $\nabla\cdot\bm{w}=0$. Then we have
    \begin{equation*}
        N_h(\bm{w};\bm{v},\bm{v})\geq 0\quad\forall \bm{v}\in V_h+[H^1_0(\Omega)]^2.
    \end{equation*}
\end{lemma}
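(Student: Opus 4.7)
The plan is to start from the definition of $N_h(\bm{w};\bm{v},\bm{v})$, convert the volume term to a sum of boundary integrals via element-wise integration by parts, and then recombine with the two face terms already appearing in \eqref{eq:Nh} so that what remains is manifestly non-negative. The key algebraic observation is that
\begin{equation*}
    (\bm{v}\otimes \bm{w}) : \nabla \bm{v} = \sum_{i,j} v_i w_j\, \partial_j v_i = \bm{w}\cdot \nabla\bigl(\tfrac{1}{2}|\bm{v}|^2\bigr),
\end{equation*}
so that on every $\tau\in\mathcal{T}_h$, integration by parts yields
\begin{equation*}
    (\bm{v}\otimes\bm{w},\nabla\bm{v})_\tau = -\tfrac{1}{2}\bigl((\nabla\cdot\bm{w}),|\bm{v}|^2\bigr)_\tau + \tfrac{1}{2}\einner{\bm{w}\cdot\nn_\tau,|\bm{v}|^2}_{\partial\tau}.
\end{equation*}
The hypothesis $\nabla\cdot\bm{w}=0$ kills the first term, and the regularity $\bm{w}\in H(\tdiv;\Omega)$ is used to justify this identity elementwise when $\bm{v}\in V_h+[H^1_0(\Omega)]^2$ (on each $\tau$, $|\bm{v}|^2$ is either polynomial or, for the $H^1_0$ component, admits an $H^1(\tau)$ trace with $\bm{w}\cdot\nn\in H^{-1/2}(\partial\tau)$ acting on it through standard duality).

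Next I would regroup the boundary contributions by edges. For an interior edge $e\in\mathcal{F}_h^0$ shared by $\tau_+,\tau_-$ with $\nn_{\tau_-}=-\nn_{\tau_+}=-\nn$, the sum of the two $\partial\tau$ contributions over $e$ equals
\begin{equation*}
    \tfrac{1}{2}\einner{\bm{w}\cdot\nn,|\bm{v}_+|^2-|\bm{v}_-|^2}_e = \einner{\avg{\bm{w}\cdot\nn},\avg{\bm{v}}\cdot\jump{\bm{v}}}_e,
\end{equation*}
using $|\bm{v}_+|^2-|\bm{v}_-|^2=2\avg{\bm{v}}\cdot\jump{\bm{v}}$ and the fact that $\bm{w}\in H(\tdiv;\Omega)$ makes $\bm{w}\cdot\nn$ single-valued across $e$, so it equals $\avg{\bm{w}\cdot\nn}$. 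Inserting this back into $N_h(\bm{w};\bm{v},\bm{v})$, the middle sum in \eqref{eq:Nh} is exactly cancelled. On a boundary edge $e\in\mathcal{F}_{pr}^b$, there is only one $\tau$, $\avg{\bm{w}\cdot\nn}=\bm{w}\cdot\nn$ and $\jump{\bm{v}}=\bm{v}$, so the remaining contribution is
\begin{equation*}
    -\tfrac{1}{2}\einner{\bm{w}\cdot\nn,|\bm{v}|^2}_e + \einner{|\bm{w}\cdot\nn|,|\bm{v}|^2}_e = \einner{|\bm{w}\cdot\nn|-\tfrac{1}{2}\bm{w}\cdot\nn,|\bm{v}|^2}_e,
\end{equation*}
and $|a|-\tfrac{1}{2}a\geq \tfrac{1}{2}|a|\geq 0$. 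What remains on interior edges is $\einner{|\avg{\bm{w}\cdot\nn}|,|\jump{\bm{v}}|^2}_e\geq 0$, finishing the proof.

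The main obstacle is the bookkeeping of signs and factors of $\tfrac{1}{2}$ when combining the volume-IBP term with the two face terms in \eqref{eq:Nh}, and making sure that the cancellation of the $\avg{\bm{w}\cdot\nn}\avg{\bm{v}}\cdot\jump{\bm{v}}$ contributions uses exactly the continuity of $\bm{w}\cdot\nn$ afforded by $\bm{w}\in H(\tdiv;\Omega)$; once that cancellation is in place, non-negativity is immediate from the upwind term on interior edges together with the $|\bm{w}\cdot\nn|-\tfrac{1}{2}\bm{w}\cdot\nn\geq 0$ estimate on boundary edges. A secondary subtlety is checking the integration-by-parts identity when $\bm{v}$ has an $H^1_0$-component, but this is handled by a routine duality/density argument and is not the conceptual heart of the proof.
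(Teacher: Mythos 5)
Your proposal is correct and follows essentially the same route as the paper: element-wise integration by parts of the convective volume term using $\nabla\cdot\bm{w}=0$ (your identity $(\bm{v}\otimes\bm{w}):\nabla\bm{v}=\bm{w}\cdot\nabla(\tfrac12|\bm{v}|^2)$ is just a compact form of the paper's two intermediate identities), regrouping over edges using the single-valuedness of $\bm{w}\cdot\nn$, cancelling the central-flux term, and concluding non-negativity from the upwind term on interior edges and $|\bm{w}\cdot\nn|-\tfrac12\bm{w}\cdot\nn\geq 0$ on boundary edges. No substantive difference from the paper's argument.
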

\begin{proof}
    Let $\tau\in \mathcal{T}_h$ be given. From integration by parts, we have
    \begin{equation}\label{eq:Nh-intbypart}
        (\bm{v}\otimes\bm{w},\nabla \bm{v})_\tau=-(\tdiv(\bm{v}\otimes\bm{w}),\bm{v})_\tau+\einner{(\bm{v}\otimes\bm{w})\nn,\bm{v}}_{\partial\tau}.
    \end{equation}
    Since $\nabla\cdot\bm{w}=0$, integration by parts and straightforward computation yield
    \begin{equation*}
        (\tdiv(\bm{v}\otimes\bm{w}),\bm{v})_\tau=\frac{1}{2}\einner{\bm{w}\cdot\nn,\bm{v}\cdot\bm{v}}_{\partial\tau}
    \end{equation*}
    and
    \begin{equation*}
        \einner{(\bm{v}\otimes\bm{w})\nn,\bm{v}}_{\partial\tau}=\einner{\bm{w}\cdot\nn,\bm{v}\cdot\bm{v}}_{\partial\tau}.
    \end{equation*}
    Then \eqref{eq:Nh-intbypart} can be rewritten as
    \begin{equation*}
        (\bm{v}\otimes\bm{w},\nabla\bm{v})_\tau=\frac{1}{2}\einner{\bm{w}\cdot\nn,\bm{v}\cdot\bm{v}}_{\partial\tau}.
    \end{equation*}
    Summing over all the elements $\tau\in \mathcal{T}_h$, we have
    \begin{align*}
        \sum_{\tau\in\mathcal{T}_h}(\bm{v}\otimes\bm{w},\nabla\bm{v})_\tau
        &=\frac{1}{2}\sum_{\tau\in\mathcal{T}_h}\einner{\bm{w}\cdot\nn,\bm{v}\cdot\bm{v}}_{\partial\tau}\\
        &=\sum_{e\in\mathcal{F}_h^0}\einner{\bm{w}\cdot\nn,\avg{\bm{v}}\cdot\jump{\bm{v}}}_e
        + \frac{1}{2}\sum_{e\in\mathcal{F}_h^b}\einner{\bm{w}\cdot\nn,\bm{v}\cdot\bm{v}}_e.
    \end{align*}
    This and the definition of $N_h(\cdot;\cdot,\cdot)$ yield
    \begin{align*}
        N_h(\bm{w};\bm{v},\bm{v})
        &=-\frac{1}{2}\sum_{e\in\mathcal{F}_h^b}\einner{\bm{w}\cdot\nn,\bm{v}\cdot\bm{v}}_e
        +\sum_{e\in\mathcal{F}_h}\einner{|\bm{w}\cdot\nn|,\jump{\bm{v}}\cdot\jump{\bm{v}}}_e\\
        &\geq \frac{1}{2}\sum_{e\in\mathcal{F}_h}\einner{|\bm{w}\cdot\nn|,\jump{\bm{v}}\cdot\jump{\bm{v}}}_e\\
        &\geq 0.
    \end{align*}
\end{proof}

By using H\"older's inequality and the discrete Sobolev embedding theorem, we obtain the following lemma.
\begin{lemma}[Boundedness]\label{lem:Nh-bound}
    For any $\bm{z}_h,\bm{v}_h,\bm{w}_h\in V_h$, it holds
    \begin{equation*}
        N_h(\bm{z}_h;\bm{v}_h,\bm{w}_h)\leq C_N\norm{\bm{z}_h}_h\norm{\bm{v}_h}_h\norm{\bm{w}_h}_h.
    \end{equation*}
\end{lemma}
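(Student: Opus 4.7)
My strategy is to bound each of the three summands in the definition of $N_h(\bm{z}_h;\bm{v}_h,\bm{w}_h)$ by H\"older's inequality with exponents $(4,4,2)$, and then reduce every $L^4$-type factor to $\norm{\cdot}_h$ via the discrete Sobolev embedding of Lemma~\ref{lem:discSobolev} and every $L^2$-type factor (gradient or jump) directly to $\norm{\cdot}_h$. The grouping $(4,4,2)$ is dictated by the fact that $N_h$ is trilinear with two velocity-type factors and one gradient- or jump-type factor, mirroring the classical continuous estimate $|(\bm{v}\otimes\bm{w},\nabla\bm{u})|\leq\norm{\bm{v}}_{L^4}\norm{\bm{w}}_{L^4}\norm{\nabla\bm{u}}_{L^2}$.

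The volume piece is handled immediately: elementwise H\"older gives
\begin{equation*}
\Bigl|\sum_{\tau\in\mathcal{T}_h}(\bm{v}_h\otimes\bm{z}_h,\nabla\bm{w}_h)_\tau\Bigr|\leq \norm{\bm{v}_h}_{L^4(\mathcal{T}_h)}\norm{\bm{z}_h}_{L^4(\mathcal{T}_h)}\norm{\nabla\bm{w}_h}_{L^2(\mathcal{T}_h)},
\end{equation*}
and Lemma~\ref{lem:discSobolev} together with the definition of $\norm{\cdot}_h$ delivers the required bound $C\norm{\bm{z}_h}_h\norm{\bm{v}_h}_h\norm{\bm{w}_h}_h$. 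For the average-jump edge contribution I would apply pointwise H\"older and then discrete H\"older with exponents $(4,4,2)$ over the edges, introducing the weights $h_e^{-1/4},h_e^{-1/4},h_e^{-1/2}$ together with a compensating $h_e\leq\mathrm{diam}(\Omega)$, so that the three edge sums match $\norm{\bm{z}_h}_{0,4,h}^4$, $\norm{\bm{v}_h}_{0,4,h}^4$ and $\norm{\bm{w}_h}_h^2$, respectively. Here one uses that for $\bm{w}_h\in V_h$ the normal component of $\jump{\bm{w}_h}$ vanishes on every dual edge, so $\|\jump{\bm{w}_h}\|_{L^2(e)}$ is controlled by $\norm{\bm{w}_h}_h$ on all of $\mathcal{F}_h$.

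The main obstacle is the third, upwind jump term $\sum_{e\in\mathcal{F}_h}\einner{|\avg{\bm{z}_h\cdot\nn}|,\jump{\bm{v}_h}\cdot\jump{\bm{w}_h}}_e$, because the norm $\norm{\cdot}_{0,4,h}$ is defined through \emph{averages}, not jumps. To cope with this, my plan is to H\"older the integrand as $\|\avg{\bm{z}_h}\|_{L^4(e)}\|\jump{\bm{v}_h}\|_{L^4(e)}\|\jump{\bm{w}_h}\|_{L^2(e)}$, split the jump into its two one-sided traces, and apply the $L^4$ discrete trace inequality~\eqref{eq:traceL4} to each, which produces contributions of the form $h_\tau^{-1/4}\norm{\bm{v}_h}_{L^4(\tau)}+h_\tau^{1/4}\norm{\nabla\bm{v}_h}_{L^2(\tau)}$. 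A second weighted H\"older then packages these into $\norm{\bm{v}_h}_{0,4,h}+\norm{\bm{v}_h}_h$, which is again controlled by $C\norm{\bm{v}_h}_h$ via Lemma~\ref{lem:discSobolev}. Adding the three contributions produces the claim with $C_N$ depending only on shape regularity and $\mathrm{diam}(\Omega)$.
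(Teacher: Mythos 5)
Your argument is correct and follows exactly the route the paper indicates for this lemma (which it states without a detailed proof, citing only H\"older's inequality and the discrete Sobolev embedding): elementwise/edgewise H\"older with exponents $(4,4,2)$, the weights matching the definitions of $\norm{\cdot}_{0,4,h}$ and $\norm{\cdot}_h$, the $L^4$ trace inequality \eqref{eq:traceL4} for the jump factor in the upwind term, and Lemma~\ref{lem:discSobolev} to return to $\norm{\cdot}_h$. Your observation that $\jump{\bm{w}_h}$ reduces to its tangential part on dual edges, so that $\sum_{e\in\mathcal{F}_h}h_e^{-1}\norm{\jump{\bm{w}_h}}_{L^2(e)}^2\leq\norm{\bm{w}_h}_h^2$ for $\bm{w}_h\in V_h$, is precisely the point needed to make the edge sums match the staggered norm, and the treatment is consistent with how the paper handles the analogous terms in the appendix proof of Lemma~\ref{lem:Nh-uu-uhuh}.
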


To ease later analysis, we define the divergence-free subspace of $V_h$ by
\begin{equation*}
    V_h^{\tdiv}=\{\bm{v}\in V_h:\bm{v}\in H(\tdiv;\Omega),\;\nabla\cdot\bm{v}=0\}.
\end{equation*}
This subspace plays an important role in subsequent sections.
We close this section by observing the connection between $b_h(\cdot,\cdot)$ and $V_h^\tdiv$.
\begin{lemma}\label{lem:divfree}
    If $\bm{v}_h\in V_h$ satisfies
    \begin{equation}\label{eq:divfree}
        b_h(\bm{v}_h,q)=0\quad\forall q\in Q_h^0,
    \end{equation}
    then $\bm{v}_h\in H(\tdiv;\Omega)$ and it is divergence-free.
\end{lemma}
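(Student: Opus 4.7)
The plan is to reduce the orthogonality $b_h(\bm{v}_h,q)=0$ to a clean piecewise identity via integration by parts, then use the unisolvent degrees of freedom of $Q_h$ from Lemma~\ref{lem:dofs} to localize test functions. A useful preliminary observation is $b_h(\bm{v}_h,1)=0$, since a constant has vanishing gradient and vanishing jumps; this lets the hypothesis be extended from $Q_h^0$ to all of $Q_h$ by subtracting the global mean from any test function.

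I would first integrate $-(\bm{v}_h,\nabla q)_\tau$ by parts element by element and rewrite $\sum_\tau \einner{\bm{v}_h\cdot\nn,q}_{\partial\tau}$ using the standard edge-jump/average identity. On each dual edge the staggered constraint $\jump{\bm{v}_h\cdot\nn}=0$ from the definition of $V_h$ makes $\avg{\bm{v}_h\cdot\nn}$ single-valued, and the resulting dual-edge term $\einner{\bm{v}_h\cdot\nn,\jump{q}}$ cancels exactly the explicit $\sum_{e\in\mathcal{F}_{dl}}\einner{\bm{v}_h\cdot\nn,\jump{q}}_e$ term already present in the definition of $b_h$. On each interior primal edge, $\jump{q}=0$ from the definition of $Q_h$ kills the corresponding piece, leaving only normal-jump contributions. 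Together with the boundary convention for $\jump{\cdot}$ on $\mathcal{F}_{pr}^b$, the identity collapses to
\begin{equation*}
    0 = b_h(\bm{v}_h,q) = \sum_{\tau\in\mathcal{T}_h}(\nabla\cdot\bm{v}_h,q)_\tau - \sum_{e\in\mathcal{F}_{pr}}\einner{\jump{\bm{v}_h\cdot\nn},q}_e \qquad \forall q\in Q_h.
\end{equation*}

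Next I would localize to extract $\nabla\cdot\bm{v}_h\equiv 0$ element by element: fix $\tau_0\in\mathcal{T}_h$ and use Lemma~\ref{lem:dofs} to pick $q\in Q_h$ with all primal-edge moments $\phi_e(q)=0$ (so $q|_e=0$ on every primal edge and the edge sum drops out) and with the interior moment $\phi_{\tau_0}(q)$ chosen so that the $L^2$-projection of $q|_{\tau_0}$ onto $\mathbb{P}_{k-1}(\tau_0)$ equals $\nabla\cdot\bm{v}_h|_{\tau_0}$, while $\phi_\tau(q)=0$ for $\tau\neq\tau_0$. Since $\nabla\cdot\bm{v}_h|_{\tau_0}\in\mathbb{P}_{k-1}(\tau_0)$, this forces $\|\nabla\cdot\bm{v}_h\|_{L^2(\tau_0)}^2=0$. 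With piecewise divergence-freeness established the identity collapses further to $\sum_{e\in\mathcal{F}_{pr}}\einner{\jump{\bm{v}_h\cdot\nn},q}_e=0$. For each interior primal edge $e^*\in\mathcal{F}_{pr}^i$, Lemma~\ref{lem:dofs} furnishes a $q\in Q_h$ whose edge trace equals $\jump{\bm{v}_h\cdot\nn}|_{e^*}\in\mathbb{P}_k(e^*)$ on $e^*$, vanishes on every other primal edge, and has vanishing interior moments; this gives $\|\jump{\bm{v}_h\cdot\nn}\|_{L^2(e^*)}^2=0$. Combined with the dual-edge continuity built into $V_h$, the normal component of $\bm{v}_h$ is single-valued across every interior edge, so $\bm{v}_h\in H(\tdiv;\Omega)$ and its distributional divergence vanishes.

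The principal hurdle is the integration-by-parts bookkeeping in the first step: one must track signs carefully to see that the dual-edge pieces generated by the per-element boundary integrals cancel exactly against the dual-edge flux term built into $b_h$, which is precisely where the staggered continuity pattern of $V_h$ and $Q_h$ pays off. After that, the DOF-based construction of localized test functions is routine.
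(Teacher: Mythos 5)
Your proposal is correct and follows essentially the same route as the paper: extend the hypothesis from $Q_h^0$ to $Q_h$ via constants, pass to the adjoint form $b_h^*(q,\bm{v}_h)$ (which you re-derive by integration by parts rather than citing \eqref{eq:adjoint}), and use the unisolvence of the degrees of freedom (SD1)--(SD2) to test against the moments of $\nabla\cdot\bm{v}_h$ and $\jump{\bm{v}_h\cdot\nn}$. The only cosmetic difference is that you construct localized test functions element-by-element and edge-by-edge, whereas the paper prescribes a single global $q$ whose interior and primal-edge moments match $-\nabla\cdot\bm{v}_h$ and $\jump{\bm{v}_h\cdot\nn}$ simultaneously, yielding both conclusions at once from a sum-of-squares identity.
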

\begin{proof}
    Note that
    \begin{equation*}
        b_h(\bm{v}_h,c)=0
    \end{equation*}
    for any constant $c$ by the definition.
    Therefore, the condition \eqref{eq:divfree} is equivalent to
    \begin{equation*}
        b_h(\bm{v}_h,q)=0\quad\forall q\in Q_h.
    \end{equation*}
    By \eqref{eq:adjoint}, we obtain
    \begin{equation*}
        0=b_h(\bm{v}_h,q)=b_h^*(q,\bm{v}_h)\quad\forall q\in Q_h.
    \end{equation*}
    Take $q$ so that (cf. (SD1)-(SD2))
    \begin{equation*}
        \begin{aligned}
            (q+\nabla\cdot\bm{v}_h,p_{k-1})_\tau&=0&&\forall p_{k-1}\in\mathbb{P}_{k-1}(\tau),\;\tau\in\mathcal{T}_h,\\
            \einner{q-\jump{\bm{v}_h\cdot\nn},p_k}_e&=0&&\forall p_k\in\mathbb{P}_k(e),\;e\in\mathcal{F}_{pr}.
        \end{aligned}
    \end{equation*}
    Then we obtain
    \begin{equation*}
        0=b_h^*(q,\bm{v}_h)=\sum_{\tau\in\mathcal{T}_h}\norm{\nabla\cdot\bm{v}_h}_{L^2(\tau)}^2+\sum_{e\in\mathcal{F}_{pr}}\norm{\jump{\bm{v}_h\cdot\nn}}_{L^2(e)}^2.
    \end{equation*}
    Therefore, $\jump{\bm{v}_h\cdot\nn}=0$ for all $e\in\mathcal{F}_{pr}$.
    Since $\jump{\bm{v}_h\cdot\nn}_e=0$ for all $e\in\mathcal{F}_{dl}$ by definition of $V_h$, we have $\bm{v}_h\in H(\tdiv;\Omega)$.
    Furthermore, since $\nabla\cdot(\bm{v}_h|_\tau)=0$ for all $\tau\in\mathcal{T}_h$, we have $\nabla\cdot\bm{v}_h=0$.
\end{proof}

\begin{remark}(divergence free velocity).
Our proposed scheme can yield a divergence-free velocity by following Lemma~\ref{lem:divfree}, which is a desirable feature. Thanks to the divergence-free property and the specially designed term for the nonlinear convective term, we are able to prove that the convergence estimates are independent of the pressure variable and the coefficient $\nu$ under a suitable assumption on the source term $\bm{f}$. Unlike the existing works on polygonal meshes \cite{chen2018,Frerichs2020,Mu2020,Zhao2020arxiv}, we do not require velocity reconstruction, which can greatly reduce the computational complexity and ease the construction of the method.
\end{remark}

\section{Existence and uniqueness}\label{sec:unisolv}
In this section, we discuss the existence and uniqueness of the solution to \eqref{eq:discform}.
A solution operator $T_h:V_h^\tdiv\rightarrow V_h$ is defined as follows: For given $\bm{z}_h\in V_h$, find $\bm{w}_h=T_h(\bm{v}_h)\in V_h$ so that
\begin{equation}\label{eq:Th}
    A_h((\bm{S}_h,\bm{w}_h,r_h),(\bm{H},\bm{v},q))+N_h(\bm{z}_h;\bm{w}_h,\bm{v})=(\bm{f},\bm{v})\quad\forall (\bm{H},\bm{v},q)\in H_h\times V_h\times Q_h
\end{equation}
for some $\bm{G}_h\in H_h$ and $p_h\in Q_h^0$.
Here,
\begin{equation*}
    A_h((\bm{S},\bm{w},r),(\bm{H},\bm{v},q))=\nu^{-1}(\bm{S},\bm{H})+B_h^*(\bm{w},\bm{H})+B_h(\bm{S},\bm{v})+b_h^*(r_h,\bm{v})+b_h(\bm{w},q).
\end{equation*}
Observe that finding the solution to \eqref{eq:discform} is equivalent to finding a fixed-point $\bm{u}_h$ of $T_h$ so that
\begin{equation*}
    T_h(\bm{u}_h)=\bm{u}_h
\end{equation*}
with its corresponding $\bm{G}_h$ and $p_h$.

\begin{lemma}\label{lem:Th-divfree}
    $T_h$ is well-defined on $V_h^\tdiv$ and $T_h(\bm{z}_h)\in V_h^\tdiv$ for all $\bm{z}_h\in V_h^\tdiv$.
\end{lemma}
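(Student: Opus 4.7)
The plan is to establish well-posedness of the linear problem \eqref{eq:Th} by showing uniqueness (which yields existence for square finite-dimensional systems), and then to verify the divergence-free property of $T_h(\bm{z}_h)$ by testing with a suitable choice of test functions.

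For well-posedness, I would test \eqref{eq:Th} against $(\bm{H},\bm{v},q)=(\bm{S}_h,\bm{w}_h,r_h)$ with $\bm{f}=\bm{0}$. The adjoint properties \eqref{eq:adjoint} make the cross terms cancel pairwise: $B_h^*(\bm{w}_h,\bm{S}_h)+B_h(\bm{S}_h,\bm{w}_h)=0$ and $b_h^*(r_h,\bm{w}_h)+b_h(\bm{w}_h,r_h)=0$. What remains is
\begin{equation*}
\nu^{-1}\norm{\bm{S}_h}_{L^2(\Omega)}^2+N_h(\bm{z}_h;\bm{w}_h,\bm{w}_h)=0.
\end{equation*}
Here the hypothesis $\bm{z}_h\in V_h^\tdiv$ is crucial: it allows Lemma~\ref{lem:Nh-nonnegativity} to give $N_h(\bm{z}_h;\bm{w}_h,\bm{w}_h)\geq 0$, so both terms must vanish and in particular $\bm{S}_h=\bm{0}$. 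Substituting back into the first equation in \eqref{eq:Th} gives $B_h^*(\bm{w}_h,\bm{H})=0$ for every $\bm{H}\in H_h$, which by \eqref{eq:adjoint} is equivalent to $B_h(\bm{H},\bm{w}_h)=0$ for all $\bm{H}\in H_h$. The inf-sup condition for $B_h$ in Lemma~\ref{lem:infsup} then forces $\norm{\bm{w}_h}_h=0$, i.e.\ $\bm{w}_h=\bm{0}$. With $\bm{S}_h=\bm{0}$ and $\bm{w}_h=\bm{0}$, the second equation reduces to $b_h^*(r_h,\bm{v})=0$ for all $\bm{v}\in V_h$, and since $r_h\in Q_h^0$, the second inf-sup estimate of Lemma~\ref{lem:infsup} yields $r_h=0$. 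This gives uniqueness, hence existence, so $T_h$ is well-defined on $V_h^\tdiv$.

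For the second assertion, I would test \eqref{eq:Th} with $(\bm{H},\bm{v},q)=(\bm{0},\bm{0},q)$ for arbitrary $q\in Q_h$. The nonlinear term and the $\bm{f}$ term drop out, leaving $b_h(\bm{w}_h,q)=0$ for every $q\in Q_h$, which is in particular true for every $q\in Q_h^0$. Lemma~\ref{lem:divfree} then immediately yields $\bm{w}_h\in H(\tdiv;\Omega)$ with $\nabla\cdot\bm{w}_h=0$, i.e.\ $T_h(\bm{z}_h)=\bm{w}_h\in V_h^\tdiv$, completing the proof.

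The only step requiring attention is the invocation of non-negativity: it hinges entirely on the assumption $\bm{z}_h\in V_h^\tdiv$ (guaranteeing $\nabla\cdot\bm{z}_h=0$ and $\bm{z}_h\in H(\tdiv;\Omega)$ via Lemma~\ref{lem:divfree}), and explains why the domain of $T_h$ is restricted to the divergence-free subspace. Everything else is a routine application of the adjoint identities and the two inf-sup conditions already at our disposal.
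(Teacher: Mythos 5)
Your proof is correct and follows essentially the same route as the paper: non-negativity of $N_h(\bm{z}_h;\cdot,\cdot)$ kills the kernel contribution to $\bm{S}_h$, the two inf-sup conditions of Lemma~\ref{lem:infsup} then force $\bm{w}_h=\bm{0}$ and $r_h=0$, uniqueness plus finite dimensionality gives existence, and Lemma~\ref{lem:divfree} gives $T_h(\bm{z}_h)\in V_h^{\tdiv}$. The only cosmetic difference is that you argue on the homogeneous system while the paper subtracts two candidate solutions, which is the same argument since the problem is linear in $(\bm{S}_h,\bm{w}_h,r_h)$ for fixed $\bm{z}_h$.
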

\begin{proof}
    Let $\bm{z}_h\in V_h^\tdiv$ be given.
    From Lemma~\ref{lem:divfree}, it is clear that $T_h(\bm{z}_h)\in V_h^\tdiv$.
    By Lemma~\ref{lem:Nh-nonnegativity} and the definition of $A_h$, we obtain
    \begin{equation*}
        A_h(\varphi,\varphi)+N_h(\bm{z}_h;\bm{v},\bm{v})\geq \nu^{-1}\norm{\bm{H}}_{L^2(\Omega)}^2\quad\forall \varphi=(\bm{H},\bm{v},q)\in H_h\times V_h\times Q_h^0.
    \end{equation*}
    Let $\varsigma_i=(\bm{G}_i,\bm{w}_i,p_i)$ be the solution to \eqref{eq:Th} with $\bm{z}_h$ for $i=1,2$.
    Then
    \begin{equation*}
        \nu^{-1}\norm{\bm{G}_1-\bm{G}_2}_{L^2(\Omega)}^2\leq  A_h(\varsigma_1-\varsigma_2,\varsigma_1-\varsigma_2)+N_h(\bm{z}_h;\bm{w}_1-\bm{w}_2,\bm{w}_1-\bm{w}_2)=0.
    \end{equation*}
    Therefore, we have $\bm{G}_1=\bm{G}_2$.
    Furthermore, Lemma~\ref{lem:infsup} and taking $(\bm{v},q)=(\bm{0},0)$ in \eqref{eq:Th} lead to
    \begin{equation*}
        \norm{\bm{w}_1-\bm{w}_2}_h\leq C\sup_{\bm{H}\in H_h}\frac{B_h(\bm{H},\bm{w}_1-\bm{w}_2)}{\norm{\bm{H}}_{L^2(\Omega)}}=C\sup_{\bm{H}\in H_h}\frac{B_h^*(\bm{w}_1-\bm{w}_2,\bm{H})}{\norm{\bm{H}}_{L^2(\Omega)}}=0.
    \end{equation*}
    Finally, the discrete adjoint property \eqref{eq:adjoint} and \eqref{eq:Th} with $(\bm{H},q)=(\bm{0},0)$ read
    \begin{equation*}
        0=b_h^*(p_1-p_2,\bm{v})=b_h(\bm{v},p_1-p_2)\quad\forall \bm{v}\in V_h.
    \end{equation*}
    By the inf-sup condition \eqref{lem:infsup}, we can obtain
    \begin{align*}
        \norm{p_1-p_2}_{1,h}\leq C \sup_{\bm{v}\in V_h}\frac{b_h(\bm{v},p_1-p_2)}{\norm{\bm{v}}_{L^2(\Omega)}}=0.
    \end{align*}
    Therefore, we have $\varsigma_1=\varsigma_2$.
    Since $V_h$ is finite dimensional space and $T_h$ is linear, $T_h$ is well-defined.

\end{proof}

Then, we have the following stability estimate.
\begin{lemma}\label{lem:Thbound}
    For any $\bm{z}_h\in V_h^\tdiv$, we have
    \begin{equation*}
        \norm{T_h(\bm{z}_h)}_h=\norm{\bm{w}_h}_h\leq C_B^{-2}\nu^{-1}\norm{\bm{f}_0}_{L^2(\Omega)},
    \end{equation*}
    where $\bm{f}_0$ is from the Helmholtz-Hodge decomposition $\bm{f}=\bm{f}_0+\nabla\chi$.
\end{lemma}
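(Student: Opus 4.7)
The plan is to test \eqref{eq:Th} against the solution itself and combine three ingredients: non-negativity of $N_h$, pressure-robustness via the Helmholtz--Hodge decomposition, and the inf-sup condition for $B_h$. First I would choose $(\bm{H},\bm{v},q)=(\bm{G}_h,\bm{w}_h,p_h)$ in \eqref{eq:Th}. The discrete adjoint identities \eqref{eq:adjoint} make the cross terms $B_h^*(\bm{w}_h,\bm{G}_h)+B_h(\bm{G}_h,\bm{w}_h)$ and $b_h^*(p_h,\bm{w}_h)+b_h(\bm{w}_h,p_h)$ cancel, leaving
\begin{equation*}
\nu^{-1}\|\bm{G}_h\|_{L^2(\Omega)}^2 + N_h(\bm{z}_h;\bm{w}_h,\bm{w}_h) = (\bm{f},\bm{w}_h).
\end{equation*}
Since $\bm{z}_h\in V_h^{\tdiv}$, Lemma~\ref{lem:Nh-nonnegativity} kills the convective contribution, yielding $\nu^{-1}\|\bm{G}_h\|_{L^2(\Omega)}^2 \le (\bm{f},\bm{w}_h)$.

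Next, to eliminate the pressure-dependent part of the forcing, I would invoke the decomposition $\bm{f}=\bm{f}_0+\nabla\chi$ supplied by Lemma~\ref{lem:bound_by_f0}. By Lemma~\ref{lem:Th-divfree}, $\bm{w}_h\in V_h^{\tdiv}\subset H(\tdiv;\Omega)$ with $\nabla\cdot\bm{w}_h=0$, and the argument in Lemma~\ref{lem:divfree} actually delivers $\jump{\bm{w}_h\cdot\nn}_e=0$ on every primal edge, including boundary edges, so $\bm{w}_h\cdot\nn=0$ on $\partial\Omega$. Integration by parts then gives $(\nabla\chi,\bm{w}_h)=0$, hence
\begin{equation*}
(\bm{f},\bm{w}_h)=(\bm{f}_0,\bm{w}_h) \le \|\bm{f}_0\|_{L^2(\Omega)}\|\bm{w}_h\|_{L^2(\Omega)} \le C\|\bm{f}_0\|_{L^2(\Omega)}\|\bm{w}_h\|_h,
\end{equation*}
where the last step uses Lemma~\ref{lem:discSobolev}.

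Finally I would convert the $\bm{G}_h$-bound into a $\bm{w}_h$-bound via the inf-sup condition in Lemma~\ref{lem:infsup}. Testing \eqref{eq:Th} with $(\bm{v},q)=(\bm{0},0)$ yields $B_h^*(\bm{w}_h,\bm{H})=-\nu^{-1}(\bm{G}_h,\bm{H})$ for all $\bm{H}\in H_h$, so using \eqref{eq:adjoint},
\begin{equation*}
C_B\|\bm{w}_h\|_h \le \sup_{\bm{H}\in H_h}\frac{B_h(\bm{H},\bm{w}_h)}{\|\bm{H}\|_{L^2(\Omega)}} = \sup_{\bm{H}\in H_h}\frac{-B_h^*(\bm{w}_h,\bm{H})}{\|\bm{H}\|_{L^2(\Omega)}} \le \nu^{-1}\|\bm{G}_h\|_{L^2(\Omega)}.
\end{equation*}
Chaining the three estimates gives $\nu^{-1}\|\bm{G}_h\|_{L^2(\Omega)}^2 \le C C_B^{-1}\nu^{-1}\|\bm{f}_0\|_{L^2(\Omega)}\|\bm{G}_h\|_{L^2(\Omega)}$, hence $\|\bm{G}_h\|_{L^2(\Omega)}\le CC_B^{-1}\|\bm{f}_0\|_{L^2(\Omega)}$, and reinserting into the inf-sup bound produces the claimed $\|\bm{w}_h\|_h\le CC_B^{-2}\nu^{-1}\|\bm{f}_0\|_{L^2(\Omega)}$. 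The main subtlety I anticipate is the step where $(\nabla\chi,\bm{w}_h)$ vanishes: it is not enough that $\nabla\cdot\bm{w}_h=0$ elementwise; one needs $\bm{w}_h\in H(\tdiv;\Omega)$ with vanishing normal trace on $\partial\Omega$, both of which are structural consequences of the staggered construction and of the homogeneous data $\bm{g}=\bm{0}$. Once this pressure-robust orthogonality is in place, the remainder is a routine energy estimate.
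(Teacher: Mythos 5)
Your proposal is correct and follows essentially the same route as the paper: test \eqref{eq:Th} with the solution itself, use the adjoint identities and Lemma~\ref{lem:Nh-nonnegativity} to get $\nu^{-1}\norm{\bm{G}_h}_{L^2(\Omega)}^2\leq(\bm{f},\bm{w}_h)$, use the inf-sup condition of Lemma~\ref{lem:infsup} together with \eqref{eq:discform1} to bound $\norm{\bm{w}_h}_h$ by $\nu^{-1}\norm{\bm{G}_h}_{L^2(\Omega)}$, and kill $(\nabla\chi,\bm{w}_h)$ using $\bm{w}_h\in V_h^{\tdiv}$. Your explicit check that $\jump{\bm{w}_h\cdot\nn}=0$ on boundary primal edges (so the normal trace vanishes on $\partial\Omega$) is a point the paper leaves implicit, and your generic constant $C$ from the discrete Poincar\'e/Sobolev bound $\norm{\bm{w}_h}_{L^2(\Omega)}\leq C\norm{\bm{w}_h}_h$ is only a cosmetic difference from the stated constant $C_B^{-2}$.
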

\begin{proof}
    Let $\bm{z}_h\in V_h^\tdiv$ and $\bm{w}_h=T_h(\bm{z}_h)$. Then there exists $\varsigma=(\bm{G}_h,\bm{w}_h,p_h)$ such that
    \begin{equation*}
        A_h(\varsigma,\varphi)+N_h(\bm{z}_h;\bm{w}_h,\bm{v})=(\bm{f},\bm{v})\quad\forall\varphi=(\bm{H},\bm{v},q)\in H_h\times V_h\times Q_h^0.
    \end{equation*}
    Taking $\varphi=\varsigma$, we obtain
    \begin{equation*}
        \nu^{-1}\norm{\bm{G}_h}_{L^2(\Omega)}^2\leq A_h(\varsigma,\varsigma)+N_h(\bm{z}_h;\bm{w}_h,\bm{w}_h)=(\bm{f},\bm{w}_h).
    \end{equation*}
    From Lemma~\ref{lem:infsup} and \eqref{eq:Th}, we have
    \begin{equation*}
        C_B^2\norm{\bm{w}_h}_h^2\leq \nu^{-2}\norm{\bm{G}_h}_{L^2(\Omega)}^2\leq \nu^{-1}(\bm{f},\bm{w}_h).
    \end{equation*}
    Finally, recall that $\bm{w}_h=T_h(\bm{z}_h)\in V_h^\tdiv$ by Lemma~\ref{lem:Th-divfree}.
    Then the Helmholtz-Hodge decomposition $\bm{f}=\bm{f}_0+\nabla\chi$ yields
    \begin{equation*}
        C_B^2\norm{\bm{w}_h}_h\leq \nu^{-1}\frac{(\bm{f}_0 +\nabla \chi,\bm{w}_h)}{\norm{\bm{w}_h}_h}=\nu^{-1}\frac{(\bm{f}_0,\bm{w}_h)}{\norm{\bm{w}_h}_h}\leq \nu^{-1}\norm{\bm{f}_0}_{L^2(\Omega)}.
    \end{equation*}
    Therefore, the proof is completed.
\end{proof}

\begin{remark}
    Lemma~\ref{lem:Thbound} implies that $T_h(\bm{z}_h)$ is independent of the irrotational part of $\bm{f}$.
\end{remark}

By Lemma~\ref{lem:Thbound} and the Brouwer fixed point theorem, the existence of the fixed-point $\bm{u}_h=T_h(\bm{u}_h)$ is guaranteed.
To show that the fixed-point is unique, it suffices to show that $T_h$ is a contraction mapping.
\begin{theorem}\label{thm:existence}
    Assume that $\bm{f}=\bm{f}_0+\nabla\chi$ satisfies
    \begin{equation*}
        \norm{\bm{f}_0}_{L^2(\Omega)}<C_B^{4}C_N^{-1}\nu^{2}.
    \end{equation*}
    Then $T_h$ has a unique fixed-point in
    \begin{equation*}
        W_h^\rho=\{\bm{v}\in V_h^\tdiv:\norm{\bm{v}}_h\leq \rho<C_B^{2}C_N^{-1}\nu\}.
    \end{equation*}
\end{theorem}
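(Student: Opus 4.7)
My plan is to prove existence and uniqueness by verifying the hypotheses of the Banach fixed-point theorem applied to $T_h$ restricted to $W_h^\rho$, which is a closed convex bounded subset of the finite-dimensional space $V_h^\tdiv$.

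First I would show that $\rho$ can actually be chosen so that the prescribed inequalities can hold simultaneously, and that $T_h$ maps $W_h^\rho$ into itself. The smallness hypothesis $\|\bm{f}_0\|_{L^2(\Omega)} < C_B^{4}C_N^{-1}\nu^{2}$ together with Lemma~\ref{lem:Thbound} gives $\|T_h(\bm{z}_h)\|_h \le C_B^{-2}\nu^{-1}\|\bm{f}_0\|_{L^2(\Omega)} < C_B^{2}C_N^{-1}\nu$, so picking any $\rho$ in the nonempty interval $[C_B^{-2}\nu^{-1}\|\bm{f}_0\|_{L^2(\Omega)},\; C_B^{2}C_N^{-1}\nu)$ works, and since Lemma~\ref{lem:Th-divfree} already secures $T_h(\bm{z}_h)\in V_h^\tdiv$, we have $T_h(W_h^\rho)\subset W_h^\rho$.

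The heart of the argument is a contraction estimate. Given $\bm{z}_1,\bm{z}_2\in W_h^\rho$, write $\varsigma_i=(\bm{G}_i,\bm{w}_i,p_i)$ for the corresponding solutions of \eqref{eq:Th}. Subtracting and testing the resulting equation with $\varphi=\varsigma_1-\varsigma_2$, the pressure and velocity-tensor couplings cancel through the adjoint property \eqref{eq:adjoint}, leaving
\begin{equation*}
\nu^{-1}\|\bm{G}_1-\bm{G}_2\|_{L^2(\Omega)}^2 + N_h(\bm{z}_1;\bm{w}_1,\bm{w}_1-\bm{w}_2)-N_h(\bm{z}_2;\bm{w}_2,\bm{w}_1-\bm{w}_2)=0.
\end{equation*}
I would then split the nonlinear difference by adding and subtracting $N_h(\bm{z}_1;\bm{w}_2,\bm{w}_1-\bm{w}_2)$. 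Linearity of $N_h$ in its second argument reduces the first pair to $N_h(\bm{z}_1;\bm{w}_1-\bm{w}_2,\bm{w}_1-\bm{w}_2)\ge 0$ by Lemma~\ref{lem:Nh-nonnegativity} (applicable because $\bm{z}_1\in V_h^\tdiv$). The remaining pair $N_h(\bm{z}_1;\bm{w}_2,\bm{w}_1-\bm{w}_2)-N_h(\bm{z}_2;\bm{w}_2,\bm{w}_1-\bm{w}_2)$ is bounded using Lemma~\ref{lem:Nh-bound} together with the elementary Lipschitz estimate $\bigl||a|-|b|\bigr|\le |a-b|$ for the upwind absolute-value term, yielding the bound $C_N\|\bm{z}_1-\bm{z}_2\|_h\|\bm{w}_2\|_h\|\bm{w}_1-\bm{w}_2\|_h\le C_N\rho\|\bm{z}_1-\bm{z}_2\|_h\|\bm{w}_1-\bm{w}_2\|_h$.

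To convert the $\bm{G}$-bound into a $\bm{w}$-bound, I would use the inf-sup condition of Lemma~\ref{lem:infsup} applied to the first block of \eqref{eq:Th} (difference form), which gives $C_B\nu\|\bm{w}_1-\bm{w}_2\|_h\le \|\bm{G}_1-\bm{G}_2\|_{L^2(\Omega)}$. Combining these inequalities yields
\begin{equation*}
\|\bm{w}_1-\bm{w}_2\|_h \le C_B^{-2}C_N\nu^{-1}\rho\,\|\bm{z}_1-\bm{z}_2\|_h,
\end{equation*}
and the choice $\rho<C_B^{2}C_N^{-1}\nu$ makes the contraction constant strictly less than $1$, so Banach's fixed-point theorem delivers a unique fixed point of $T_h$ in $W_h^\rho$. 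The main obstacle I expect is the careful handling of the upwind term $|\{\bm{w}\cdot\bm{n}\}|$ when splitting the nonlinear difference, since $N_h$ is not linear in its first argument; the Lipschitz property of the absolute value combined with the discrete trace and Sobolev inequalities underlying Lemma~\ref{lem:Nh-bound} is what keeps this step clean.
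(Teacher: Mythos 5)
Your proposal is correct and follows essentially the same route as the paper: subtract the two linearized problems, test with $\varsigma_1-\varsigma_2$ so the adjoint properties \eqref{eq:adjoint} cancel the coupling terms, use Lemma~\ref{lem:Nh-nonnegativity} for $N_h(\bm{z}_1;\bm{w}_1-\bm{w}_2,\bm{w}_1-\bm{w}_2)$, bound the remaining difference by $C_N\norm{\bm{z}_1-\bm{z}_2}_h\norm{\bm{w}_2}_h\norm{\bm{w}_1-\bm{w}_2}_h$, and pass from $\bm{G}$ to $\bm{w}$ via Lemma~\ref{lem:infsup} to get a contraction under the smallness assumption. The only differences are cosmetic and in your favor: you explicitly verify $T_h(W_h^\rho)\subset W_h^\rho$ and bound $\norm{\bm{w}_2}_h$ by $\rho$ rather than by $C_B^{-2}\nu^{-1}\norm{\bm{f}_0}_{L^2(\Omega)}$ as the paper does, and you treat the upwind term's lack of linearity in the first argument via $\bigl||a|-|b|\bigr|\le|a-b|$, a point the paper glosses over by writing the difference as $N_h(\bm{z}_2-\bm{z}_1;\cdot,\cdot)$.
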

\begin{proof}
    Let $\bm{\varsigma}_i=(\bm{G}_i,\bm{w}_i,p_i)$ be the solution to \eqref{eq:Th} with $\bm{z}_i\in V_h^\tdiv$ for $i=1,2$.
    Then we have
    \begin{equation*}
        A_h(\bm\varsigma_i,\bm\varphi)+N_h(\bm{z}_i;\bm{w}_i,\bm{v})=(\bm{f},\bm{v})\quad\forall\varphi=(\bm{H},\bm{v},q)\in H_h\times V_h\times Q_h^0.
    \end{equation*}
    Taking $\bm\varphi=\bm\varsigma_1-\bm\varsigma_2$, we obtain
    \begin{equation*}
        \begin{aligned}
            \nu^{-1}\norm{\bm{G}_1-\bm{G}_2}_{L^2(\Omega)}^2
            &\leq A_h(\bm\varsigma_1-\bm\varsigma_2,\bm\varsigma_1-\bm\varsigma_2) + N_h(\bm{z}_1;\bm{w}_1-\bm{w}_2,\bm{w}_1-\bm{w}_2)\\
            &=N_h(\bm{z}_2-\bm{z}_1;\bm{w}_2,\bm{w}_1-\bm{w}_2)\\
            &\leq C_N\norm{\bm{z}_1-\bm{z}_2}_h\norm{\bm{w}_2}_h\norm{\bm{w}_1-\bm{w}_2}_h.
        \end{aligned}
    \end{equation*}
    From Lemma~\ref{lem:infsup} and Lemma~\ref{lem:Thbound}, we obtain
    \begin{equation*}
        \begin{aligned}
        \norm{\bm{w}_1-\bm{w}_2}_h^2
        &\leq C_B^{-2}\nu^{-2}\norm{\bm{G}_1-\bm{G}_2}_{L^2(\Omega)}^2\\
        &\leq C_B^{-2}C_N\nu^{-1}\norm{\bm{z}_2-\bm{z}_1}_h\norm{\bm{w}_2}\norm{\bm{w}_1-\bm{w}_2}_h\\
        &\leq C_B^{-4}C_N\nu^{-2}\norm{\bm{f}_0}_{L^2(\Omega)}\norm{\bm{z}_2-\bm{z}_1}_h\norm{\bm{w}_1-\bm{w}_2}_h.
        \end{aligned}
    \end{equation*}
    Recalling that $\bm{w}_i=T_h(\bm{z}_i)$, there holds
    \begin{align*}
        \norm{T_h(\bm{z}_1)-T_h(\bm{z}_2)}_h\leq C_B^{-4}\nu^{-2}\norm{\bm{f}_0}_{L^2(\Omega)}\norm{\bm{z}_2-\bm{z}_1}_h.
    \end{align*}
    By assuming $\norm{\bm{f}_0}_{L^2(\Omega)}<C_B^{4}C_N^{-1}\nu^{2}$, $T_h$ is a contraction mapping.
    Therefore, an application of the Brouwer fixed-point theorem implies that there exists a unique fixed-point in $W_h^\rho$.
\end{proof}

\section{\textit{A priori} error estimates}\label{sec:pri}

In this section, we derive \textit{a priori} error estimates of the discrete solution.
In particular, the $L^2$ error estimates for all the variables are derived, and a superconvergent result for velocity can be designed under $\|\cdot\|_h$ norm.

Firstly, we have the following \textit{a priori} bounds for $\norm{\bm{u}_h-J_h\bm{u}}_h$.
\begin{lemma}\label{lem:bound_intpU_by_intpG}
    Let $(\bm{u},\bm{G})$ be the solution of \eqref{eq:model-conserve} and $(\bm{u}_h,\bm{G}_h)\in V_h\times H_h$ be the discrete solution of \eqref{eq:discform}.
    Then the following estimate holds:
    \begin{equation*}
        \norm{J_h\bm{u}-\bm{u}_h}_h\leq C_B\nu^{-1}\norm{\bm{G}-\bm{G}_h}_{L^2(\Omega)}.
    \end{equation*}
\end{lemma}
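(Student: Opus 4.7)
The plan is to isolate an error equation involving only the first discrete equation \eqref{eq:discform1}, so that the nonlinear term $N_h$ plays no role, and then close by the inf-sup condition of Lemma~\ref{lem:infsup} together with Cauchy--Schwarz. Since the result compares the velocity error to the gradient variable error only, the strategy is essentially the same argument used in mixed methods for elliptic problems, adapted to the staggered setting via the adjoint identity \eqref{eq:adjoint}.

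First, I would produce the continuous counterpart of \eqref{eq:discform1}. Testing $\nu^{-1}\bm{G}=\nabla \bm{u}$ against an arbitrary $\bm{H}\in H_h$, integrating by parts element-by-element, and distributing the boundary contributions onto primal and dual edges using the jump conditions $\jump{\bm{H}\nn}=\bm{0}$ on $\mathcal{F}_{pr}^0$ and $\jump{\tn\cdot\bm{H}\nn}=0$ on $\mathcal{F}_{dl}$ that define $H_h$, together with $\bm{u}\in [H^1_0(\Omega)]^2$, yields
\begin{equation*}
\nu^{-1}(\bm{G},\bm{H})+B_h^*(\bm{u},\bm{H})=0\qquad\forall \bm{H}\in H_h.
\end{equation*}
Subtracting the discrete equation \eqref{eq:discform1} (with $\bm{g}=\bm 0$) gives the error identity
\begin{equation*}
\nu^{-1}(\bm{G}-\bm{G}_h,\bm{H})+B_h^*(\bm{u}-\bm{u}_h,\bm{H})=0.
\end{equation*}
Now invoke Lemma~\ref{lem:intp}, namely $B_h^*(J_h\bm{u}-\bm{u},\bm{H})=0$, to rewrite the second term as $B_h^*(J_h\bm{u}-\bm{u}_h,\bm{H})$, and then apply the discrete adjoint relation \eqref{eq:adjoint} to exchange $B_h^*$ for $-B_h$. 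This leaves the clean identity
\begin{equation*}
B_h(\bm{H},J_h\bm{u}-\bm{u}_h)=\nu^{-1}(\bm{G}-\bm{G}_h,\bm{H})\qquad\forall \bm{H}\in H_h.
\end{equation*}

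Finally, Cauchy--Schwarz gives $B_h(\bm{H},J_h\bm{u}-\bm{u}_h)\leq \nu^{-1}\|\bm{G}-\bm{G}_h\|_{L^2(\Omega)}\|\bm{H}\|_{L^2(\Omega)}$. Since $J_h\bm{u}-\bm{u}_h\in V_h$, the inf-sup condition of Lemma~\ref{lem:infsup} delivers
\begin{equation*}
\|J_h\bm{u}-\bm{u}_h\|_h\leq C_B^{-1}\sup_{\bm{H}\in H_h}\frac{B_h(\bm{H},J_h\bm{u}-\bm{u}_h)}{\|\bm{H}\|_{L^2(\Omega)}}\leq C_B^{-1}\nu^{-1}\|\bm{G}-\bm{G}_h\|_{L^2(\Omega)},
\end{equation*}
which is the desired bound (up to the inf-sup constant convention used in the statement).

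I expect the only non-mechanical step to be verifying the continuous identity $\nu^{-1}(\bm{G},\bm{H})+B_h^*(\bm{u},\bm{H})=0$ for $\bm{u}\in [H^1_0(\Omega)]^2$ and $\bm{G}$ only $H(\tdiv;\Omega)$-regular, which is where the staggered jump conditions on $H_h$ do the work of making all inter-element traces well-defined and the boundary sums collapse. Everything after that step is a one-line application of Lemma~\ref{lem:intp}, \eqref{eq:adjoint}, Cauchy--Schwarz, and the inf-sup stability.
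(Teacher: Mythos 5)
Your proposal is correct and follows essentially the same route as the paper: the error identity you derive is exactly the "Galerkin orthogonality" the paper asserts, after which both arguments use Lemma~\ref{lem:intp}, the adjoint relation \eqref{eq:adjoint}, Cauchy--Schwarz, and the inf-sup condition of Lemma~\ref{lem:infsup}. The only difference is cosmetic: you spell out the consistency/integration-by-parts step and write the constant as $C_B^{-1}$ (which is what Lemma~\ref{lem:infsup} literally gives), while the paper states the orthogonality directly and absorbs the constant as $C_B$.
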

\begin{proof}
    The Galerkin orthogonality reads
    \begin{equation*}
        \nu^{-1}(\bm{G}-\bm{G}_h,\bm{H})+B_h^*(\bm{u}-\bm{u}_h,\bm{H})=0\quad\forall\bm{H}\in H_h.
    \end{equation*}
    By Lemma~\ref{lem:intp}, we have
    \begin{equation*}
        B_h(\bm{H},J_h\bm{u}-\bm{u}_h)=-B_h^*(\bm{u}-\bm{u}_h,\bm{H})=\nu^{-1}(\bm{G}-\bm{G}_h,\bm{H}).
    \end{equation*}
    This and Lemma~\ref{lem:infsup} imply that
    \begin{equation*}
        \begin{aligned}
        \norm{J_h\bm{u}-\bm{u}_h}_h
        &\leq C_B\sup_{\bm{H}\in H_h}\frac{B_h(\bm{H},J_h\bm{u}-\bm{u}_h)}{\norm{\bm{H}}_{L^2(\Omega)}}\\
        &=C_B\sup_{\bm{H}\in H_h}\frac{\nu^{-1}(\bm{G}-\bm{G}_h,\bm{H})}{\norm{\bm{H}}_{L^2(\Omega)}}\\
        &\leq C_B\nu^{-1}\norm{\bm{G}-\bm{G}_h}_{L^2(\Omega)}.
        \end{aligned}
    \end{equation*}
\end{proof}

We decompose the velocity error $\bm{u}-\bm{u}_h$ into two parts,
\begin{equation*}
    \bm{u}-\bm{u}_h=(\bm{u}-J_h\bm{u})+(J_h\bm{u}-\bm{u}_h)=:\bm{e}_*+\bm{e}_h.
\end{equation*}
\begin{lemma}\label{lem:Nh-uu-uhuh}
    Let $\bm{f}=\bm{f}_0+\nabla\chi$ be the Helmholtz-Hodge decomposition of $\bm{f}$.
    Then we have
    \begin{equation*}
       | N_h(\bm{u};\bm{u},\bm{v}_h)-N_h(\bm{u}_h;\bm{v}_h,\bm{v}_h)|\leq \nu^{-1}\norm{\bm{f}_0}_{L^2(\Omega)}(C_{-1}\norm{\bm{e}_*}_{0,4,h}+C_*\norm{\bm{e}_h}_h)\norm{\bm{v}_h}_h
    \end{equation*}
    for all $\bm{v}_h\in V_h$.
\end{lemma}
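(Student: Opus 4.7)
The plan is to bound the difference by linearizing $N_h$ in its first two slots, splitting into pieces that either lie entirely in $V_h$ (so that the boundedness Lemma~\ref{lem:Nh-bound} applies) or involve $\bm{e}_*$ (so that a direct Hölder estimate from the definition \eqref{eq:Nh} applies). First I would add and subtract $N_h(\bm{u}_h;\bm{u}_h,\bm{v}_h)$ to obtain
\begin{equation*}
N_h(\bm{u};\bm{u},\bm{v}_h) - N_h(\bm{u}_h;\bm{v}_h,\bm{v}_h) = N_h(\bm{u}-\bm{u}_h;\bm{u},\bm{v}_h) + N_h(\bm{u}_h;\bm{u}-\bm{u}_h,\bm{v}_h) + N_h(\bm{u}_h;\bm{u}_h-\bm{v}_h,\bm{v}_h).
\end{equation*}
Substituting $\bm{u}-\bm{u}_h = \bm{e}_* + \bm{e}_h$ in the first two terms and using $\bm{u}_h = J_h\bm{u}-\bm{e}_h$ together with a further insertion of $\pm N_h(\bm{u}_h;J_h\bm{u},\bm{v}_h)$ in the third term lets me organize everything as a sum whose summands have either (a) all three arguments in $V_h$, with one of them equal to $\bm{e}_h$, or (b) at least one argument equal to $\bm{e}_*$.

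For summands of type (a), such as $N_h(\bm{e}_h;J_h\bm{u},\bm{v}_h)$ and $N_h(\bm{u}_h;\bm{e}_h,\bm{v}_h)$, Lemma~\ref{lem:Nh-bound} gives a clean bound by the product of the three $\|\cdot\|_h$ norms. The factors $\|J_h\bm{u}\|_h$ and $\|\bm{u}_h\|_h$ are then each controlled by $C\nu^{-1}\|\bm{f}_0\|_{L^2(\Omega)}$: the first by the stability of the interpolation $J_h$ combined with Lemma~\ref{lem:bound_by_f0}, the second by Lemma~\ref{lem:Thbound}. This produces exactly the $C_*\nu^{-1}\|\bm{f}_0\|_{L^2(\Omega)}\|\bm{e}_h\|_h\|\bm{v}_h\|_h$ contribution.

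For summands of type (b), Lemma~\ref{lem:Nh-bound} does not apply since $\bm{e}_* \notin V_h$, so I would go back to the definition \eqref{eq:Nh}. The volume integrals $(\bm{\psi}\otimes\bm{w},\nabla\bm{v}_h)_\tau$ are estimated by Hölder with exponents $(4,4,2)$. The edge integrals are estimated edge-by-edge by Hölder together with the discrete trace inequality \eqref{eq:traceL4} applied to $\bm{u}$, $J_h\bm{u}$, or $\bm{u}_h$; after tracking the $h_e$ weights these reassemble into $\|\bm{e}_*\|_{0,4,h}$ times the $\|\cdot\|_{0,4,h}$ norm of the other non-$\bm{v}_h$ argument times $\|\bm{v}_h\|_h$. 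The $\|\cdot\|_{0,4,h}$ norm of $\bm{u}_h$ or $J_h\bm{u}$ is then absorbed into $C\nu^{-1}\|\bm{f}_0\|_{L^2(\Omega)}$ via Lemma~\ref{lem:discSobolev} (together with Lemma~\ref{lem:bound_by_f0}), yielding the $C_{-1}\nu^{-1}\|\bm{f}_0\|_{L^2(\Omega)}\|\bm{e}_*\|_{0,4,h}\|\bm{v}_h\|_h$ contribution.

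The delicate step will be the edge estimates in case (b): because $\bm{e}_*$ is only a smooth remainder rather than a $V_h$ function, the jumps and averages appearing in \eqref{eq:Nh} must be handled via the discrete trace inequality \eqref{eq:traceL4} rather than being absorbed cleanly by the discrete Sobolev embedding, and getting the powers of $h_e$ to align so that $\|\cdot\|_{0,4,h}$ emerges for one factor and $\|\cdot\|_h$ for the other is the place where care is required to ensure that $C_{-1}$ and $C_*$ are independent of $h$ and of $\nu$.
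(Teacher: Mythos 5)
The decomposition you use to deal with the statement as printed does not close. After writing the difference as $N_h(\bm{u}-\bm{u}_h;\bm{u},\bm{v}_h)+N_h(\bm{u}_h;\bm{u}-\bm{u}_h,\bm{v}_h)+N_h(\bm{u}_h;\bm{u}_h-\bm{v}_h,\bm{v}_h)$, your claim that inserting $\pm N_h(\bm{u}_h;J_h\bm{u},\bm{v}_h)$ into the third term reduces everything to summands of type (a) or (b) is false: it yields $-N_h(\bm{u}_h;\bm{e}_h,\bm{v}_h)$ (harmless) plus $N_h(\bm{u}_h;J_h\bm{u}-\bm{v}_h,\bm{v}_h)$, whose middle argument is neither $\bm{e}_*$ nor $\bm{e}_h$ and is of order one for a generic $\bm{v}_h\in V_h$, so it cannot be absorbed into $\nu^{-1}\norm{\bm{f}_0}_{L^2(\Omega)}(C_{-1}\norm{\bm{e}_*}_{0,4,h}+C_*\norm{\bm{e}_h}_h)\norm{\bm{v}_h}_h$. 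In fact no proof of the literal statement is possible: replacing $\bm{v}_h$ by $t\bm{v}_h$ makes the left-hand side grow quadratically in $t$ (through $N_h(\bm{u}_h;\bm{v}_h,\bm{v}_h)$, which is nonnegative and generically positive by Lemma~\ref{lem:Nh-nonnegativity}) while the right-hand side grows only linearly. The resolution is that the second slot of the second $N_h$ in the statement is a misprint for $\bm{u}_h$: the quantity actually proved and used is $N_h(\bm{u};\bm{u},\bm{v}_h)-N_h(\bm{u}_h;\bm{u}_h,\bm{v}_h)$, as the proof in Appendix~\ref{sec:app} and the application in Theorem~\ref{thm:convG} (with test function $\bm{u}_h-J_h\bm{u}$) show. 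You should flag the typo and drop the spurious third term rather than claim it can be reorganized away.

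For the corrected difference your plan is essentially the paper's proof. The paper inserts $\pm N_h(J_h\bm{u};J_h\bm{u},\bm{v}_h)$ to obtain $N_h(\bm{e}_*;\bm{u},\bm{v}_h)+N_h(J_h\bm{u};\bm{e}_*,\bm{v}_h)+N_h(\bm{e}_h;\bm{u}_h,\bm{v}_h)+N_h(J_h\bm{u};\bm{e}_h,\bm{v}_h)$, which is the same two-case structure you describe: the $\bm{e}_*$-terms are estimated directly from the definition \eqref{eq:Nh} by H\"older with exponents $(4,4,2)$ on the volume part and the trace inequality \eqref{eq:traceL4} on the edge part, producing $\norm{\bm{e}_*}_{0,4,h}$; the $\bm{e}_h$-terms are handled with the discrete trace and Sobolev inequalities (in effect Lemma~\ref{lem:Nh-bound}); and the remaining factors $\norm{\bm{u}}_{H^1(\Omega)}$, $\norm{J_h\bm{u}}_h$, $\norm{\bm{u}_h}_h$ are converted to $C\nu^{-1}\norm{\bm{f}_0}_{L^2(\Omega)}$ via Lemma~\ref{lem:bound_by_f0}, the stability of $J_h$, and Lemma~\ref{lem:Thbound}/Theorem~\ref{thm:existence}, exactly as you propose. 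So once the misread statement is corrected, the remainder of your argument coincides with the paper's.
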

For the proof, see Appendix~\ref{sec:app}.
With the aid of the aforementioned lemmas, we can prove the convergence of $\bm{G}_h$.

\begin{theorem}\label{thm:convG}
    Assume that $\norm{\bm{f}_0}_{L^2(\Omega)}\leq \nu^2\min\{\frac{1}{2}C_*^{-1}C_B^2,C_B^{4}C_N^{-1}\}$.
    Then we have
    \begin{equation*}
        \norm{\bm{G}_h-\Pi_h\bm{G}_h}_{L^2(\Omega)}\leq C(\nu^{2}\norm{\bm{u}-J_h\bm{u}}_{0,4,h}+\norm{\bm{G}-\Pi_h\bm{G}}_{L^2(\Omega)}).
    \end{equation*}
\end{theorem}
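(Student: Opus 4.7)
Let me set $\eta:=\Pi_h\bm{G}-\bm{G}_h$ and $\bm{e}_h:=J_h\bm{u}-\bm{u}_h$; the goal is to control $\|\eta\|_{L^2(\Omega)}$ (the expression in the statement should read $\|\Pi_h\bm{G}-\bm{G}_h\|_{L^2(\Omega)}$). The backbone of the argument is the energy identity obtained from Galerkin orthogonality. Subtracting \eqref{eq:discform} from the consistent weak form of \eqref{eq:model-conserve-with-G} (consistency of the nonlinear term is given by Lemma~\ref{lem:Nh-consistency}) yields, for all admissible discrete test pairs,
\begin{align*}
\nu^{-1}(\bm{G}-\bm{G}_h,\bm{H})+B_h^{*}(\bm{u}-\bm{u}_h,\bm{H})&=0,\\
B_h(\bm{G}-\bm{G}_h,\bm{v})+N_h(\bm{u};\bm{u},\bm{v})-N_h(\bm{u}_h;\bm{u}_h,\bm{v})+b_h^{*}(p-p_h,\bm{v})&=0.
\end{align*}
A preliminary observation is that $\bm{e}_h\in V_h^{\tdiv}$: \eqref{eq:discform3} combined with $b_h(J_h\bm{u}-\bm{u},q)=0$ from Lemma~\ref{lem:intp} gives $b_h(\bm{e}_h,q)=0$, and Lemma~\ref{lem:divfree} concludes.

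I now test the first orthogonality with $\bm H=\eta$ and use \eqref{eq:adjoint} together with Lemma~\ref{lem:intp} (to kill $B_h^{*}(\bm{u}-J_h\bm{u},\eta)$) to obtain
\begin{equation*}
\nu^{-1}\|\eta\|_{L^2(\Omega)}^{2}=-\nu^{-1}(\bm{G}-\Pi_h\bm{G},\eta)+B_h(\eta,\bm{e}_h).
\end{equation*}
Testing the second orthogonality with $\bm v=\bm{e}_h$, the divergence-free property of $\bm{e}_h$ (together with \eqref{eq:adjoint} and Lemma~\ref{lem:intp}) kills the pressure coupling, while $B_h(\bm{G}-\Pi_h\bm{G},\bm{e}_h)=0$ by Lemma~\ref{lem:intp}, leaving $B_h(\eta,\bm{e}_h)=N_h(\bm{u}_h;\bm{u}_h,\bm{e}_h)-N_h(\bm{u};\bm{u},\bm{e}_h)$. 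Substituting gives the energy identity
\begin{equation*}
\nu^{-1}\|\eta\|_{L^2(\Omega)}^{2}=-\nu^{-1}(\bm{G}-\Pi_h\bm{G},\eta)+N_h(\bm{u}_h;\bm{u}_h,\bm{e}_h)-N_h(\bm{u};\bm{u},\bm{e}_h).
\end{equation*}

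To handle the nonlinear difference, I expand $\bm{u}_h=J_h\bm{u}-\bm{e}_h$ in the middle slot, writing $N_h(\bm{u}_h;\bm{u}_h,\bm{e}_h)=N_h(\bm{u}_h;J_h\bm{u},\bm{e}_h)-N_h(\bm{u}_h;\bm{e}_h,\bm{e}_h)$, and drop the non-negative quadratic term by Lemma~\ref{lem:Nh-nonnegativity} (applicable since $\bm{u}_h\in V_h^{\tdiv}$). The remaining piece is controlled by Lemma~\ref{lem:Nh-uu-uhuh} with $\bm v_h=\bm{e}_h$, producing a bound of the form $\nu^{-1}\|\bm{f}_0\|_{L^2(\Omega)}(C_{-1}\|\bm{e}_*\|_{0,4,h}+C_*\|\bm{e}_h\|_h)\|\bm{e}_h\|_h$. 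Then Lemma~\ref{lem:bound_intpU_by_intpG} combined with the triangle inequality gives $\|\bm{e}_h\|_h\le C_B\nu^{-1}(\|\bm{G}-\Pi_h\bm{G}\|_{L^2(\Omega)}+\|\eta\|_{L^2(\Omega)})$, feeding everything into a quadratic-in-$\|\eta\|_{L^2(\Omega)}$ inequality; Cauchy--Schwarz on $(\bm G-\Pi_h\bm G,\eta)$ and Young's inequality organize the rest.

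The main obstacle is the absorption step at the end. After all substitutions, a term of the shape $C_*C_B^{2}\nu^{-3}\|\bm{f}_0\|_{L^2(\Omega)}\|\eta\|_{L^2(\Omega)}^{2}$ appears on the right, and it must be hidden inside the left-hand $\nu^{-1}\|\eta\|_{L^2(\Omega)}^{2}$. This is exactly the role of the smallness hypothesis $\|\bm{f}_0\|_{L^2(\Omega)}\le\tfrac12 C_*^{-1}C_B^{2}\nu^{2}$ (matched to the fixed-point smallness in Theorem~\ref{thm:existence}): it forces that coefficient to be at most $\tfrac12\nu^{-1}$, so half of the left-hand side survives. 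Once absorbed, the remaining $\|\bm{f}_0\|$-weighted cross terms are bounded by using the same hypothesis to trade $\|\bm{f}_0\|$ for the appropriate power of $\nu$, which is what produces the $\nu^{2}$ factor in front of $\|\bm{u}-J_h\bm{u}\|_{0,4,h}$, and Young's inequality on $(\bm G-\Pi_h\bm G,\eta)$ yields the $\|\bm{G}-\Pi_h\bm{G}\|_{L^2(\Omega)}$ contribution. Taking square roots delivers the stated estimate.
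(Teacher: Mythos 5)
Your argument is correct and follows essentially the same route as the paper's proof: the same energy identity from the two Galerkin-orthogonality relations tested with $\Pi_h\bm{G}-\bm{G}_h$ and $J_h\bm{u}-\bm{u}_h$, elimination of the pressure via Lemma~\ref{lem:intp} and the divergence-free property, control of the nonlinear difference by Lemma~\ref{lem:Nh-uu-uhuh}, and absorption through Lemma~\ref{lem:bound_intpU_by_intpG} and the smallness of $\norm{\bm{f}_0}_{L^2(\Omega)}$ (you are also right that the estimate should be read as a bound on $\norm{\bm{G}_h-\Pi_h\bm{G}}_{L^2(\Omega)}$). Your only deviation, splitting off and discarding $N_h(\bm{u}_h;\bm{e}_h,\bm{e}_h)$ via Lemma~\ref{lem:Nh-nonnegativity} before invoking Lemma~\ref{lem:Nh-uu-uhuh}, is harmless but redundant, since that lemma is meant to bound the full difference $N_h(\bm{u};\bm{u},\bm{v}_h)-N_h(\bm{u}_h;\bm{u}_h,\bm{v}_h)$, and the term you drop is exactly the (nonnegative, bounded) discrepancy between that quantity and the one you feed into the lemma, so the resulting estimate is the same as the paper's.
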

\begin{proof}
    Let $(\bm{G}_h,\bm{u}_h,p_h)\in H_h\times V_h\times Q_h^0$ be the solution to \eqref{eq:discform}.
    Then we have
    \begin{equation*}
        \begin{aligned}
            \nu^{-1}\norm{\bm{G}_h-\Pi_h\bm{G}_h}_{L^2(\Omega)}^2
            &=\nu^{-1}(\bm{G}_h-\bm{G},\bm{G}_h-\Pi_h\bm{G})+\nu^{-1}(\bm{G}-\Pi_h\bm{G},\bm{G}_h-\Pi_h\bm{G})\\
            &=B_h^*(\bm{u}_h-\bm{u},\bm{G}_h-\Pi_h\bm{G})
            +\nu^{-1}(\bm{G}-\Pi_h\bm{G},\bm{G}_h-\Pi_h\bm{G}).
        \end{aligned}
    \end{equation*}
    By Lemma~\ref{lem:intp} and the adjoint properties of $B_h(\cdot,\cdot)$ and $B_h^*(\cdot,\cdot)$, we obtain
    \begin{equation*}
        \begin{aligned}
            \nu^{-1}\norm{\bm{G}_h-\Pi_h\bm{G}_h}_{L^2(\Omega)}^2
            &=B_h^*(\bm{u}_h-J_h\bm{u},\bm{G}_h-\Pi_h\bm{G})+\nu^{-1}(\bm{G}-\Pi_h\bm{G},\bm{G}_h-\Pi_h\bm{G})\\
            &=B_h(\bm{G}_h-\Pi_h\bm{G},\bm{u}_h-J_h\bm{u})+\nu^{-1}(\bm{G}-\Pi_h\bm{G},\bm{G}_h-\Pi_h\bm{G})\\
            &=B_h(\bm{G}_h-\bm{G},\bm{u}_h-J_h\bm{u})+\nu^{-1}(\bm{G}-\Pi_h\bm{G},\bm{G}_h-\Pi_h\bm{G}).
        \end{aligned}
    \end{equation*}
    The consistency of discrete operators and \eqref{eq:discform2} yield
    \begin{equation*}
        \begin{aligned}
            \nu^{-1}\norm{\bm{G}_h-\Pi_h\bm{G}_h}_{L^2(\Omega)}^2
            &=N_h(\bm{u};\bm{u},\bm{u}_h-J_h\bm{u})-N_h(\bm{u}_h;\bm{u}_h,\bm{u}_h-J_h\bm{u})\\
            &\quad +b_h^*(p-p_h,\bm{u}_h-J_h\bm{u})+\nu^{-1}(\bm{G}-\Pi_h\bm{G},\bm{G}_h-\Pi_h\bm{G}).
        \end{aligned}
    \end{equation*}
    Lemmas~\ref{lem:intp},\;\ref{lem:Nh-uu-uhuh}, \eqref{eq:adjoint} and \eqref{eq:discform3} lead to
    \begin{equation*}
        \begin{aligned}
            \nu^{-1}\norm{\bm{G}_h-\Pi_h\bm{G}_h}_{L^2(\Omega)}^2
            &=N_h(\bm{u};\bm{u},\bm{u}_h-J_h\bm{u})-N_h(\bm{u}_h;\bm{u}_h,\bm{u}_h-J_h\bm{u})\\
            &\quad +b_h(\bm{u}_h-J_h\bm{u},I_hp-p_h)+\nu^{-1}(\bm{G}-\Pi_h\bm{G},\bm{G}_h-\Pi_h\bm{G})\\
            &=N_h(\bm{u};\bm{u},\bm{u}_h-J_h\bm{u})-N_h(\bm{u}_h;\bm{u}_h,\bm{u}_h-J_h\bm{u})\\
            &\quad +\nu^{-1}(\bm{G}-\Pi_h\bm{G},\bm{G}_h-\Pi_h\bm{G})\\
            &\leq C\nu^{-1}(\norm{\bm{f}_0}_{L^2(\Omega)}\norm{\bm{u}-J_h\bm{u}}_{0,4,h} + \norm{\bm{G}-\Pi_h\bm{G}}_{L^2(\Omega)})\norm{\bm{G}_h-\Pi_h\bm{G}}_{L^2(\Omega)}\\
            &\quad +C_*\nu^{-1}\norm{\bm{f}_0}_{L^2(\Omega)}\norm{J_h\bm{u}-\bm{u}_h}_h^2.
        \end{aligned}
    \end{equation*}
    Using Lemma~\ref{lem:bound_intpU_by_intpG}, we obtain
    \begin{equation*}
        \nu^{-1}(1-C_*C_B^{-2}\nu^{-2}\norm{\bm{f}_0}_{L^2(\Omega)})\norm{\bm{G}_h-\Pi_h\bm{G}_h}_{L^2(\Omega)}\leq C\nu^{-1}(\norm{\bm{f}_0}_{L^2(\Omega)}\norm{\bm{u}-J_h\bm{u}}_{0,4,h}+\norm{\bm{G}-\Pi_h\bm{G}}_{L^2(\Omega)}).
    \end{equation*}
    This coupled with the assumption $\norm{\bm{f}_0}_{L^2(\Omega)}\leq \frac{1}{2}C_*^{-1}C_B^2\nu^2$ yields
    \begin{equation*}
        \norm{\bm{G}_h-\Pi_h\bm{G}}_{L^2(\Omega)}\leq C(\norm{\bm{f}_0}_{L^2(\Omega)}\norm{\bm{u}-J_h\bm{u}}_{0,4,h} + \norm{\bm{G}-\Pi_h\bm{G}}_{L^2(\Omega)}).
    \end{equation*}
    Therefore, the proof is completed.
\end{proof}
Now, we are ready to prove the main theorem of this paper.
\begin{theorem}\label{thm:conv}
    Assume that $\bm{f}_0$ satisfies the assumption in Theorem~\ref{thm:convG} and the weak solution $(\bm{u},p)$ to \eqref{eq:model-conserve} belongs to $[H^{k+2}(\Omega)]^2\times H^{k+1}(\Omega)$.
    Then it holds
    \begin{equation*}
        \begin{aligned}
            \norm{\bm{G}-\bm{G}_h}_{L^2(\Omega)}&\leq C\nu h^{k+1}\norm{\bm{u}}_{H^{k+2}(\Omega)},\\
            \norm{\bm{u}-\bm{u}_h}_{L^2(\Omega)}&\leq Ch^{k+1}\norm{\bm{u}}_{H^{k+2}(\Omega)},\\
            \norm{p-p_h}_{L^2(\Omega)}&\leq Ch^{k+1}(\nu \norm{\bm{u}}_{H^{k+2}(\Omega)}+\norm{p}_{H^{k+1}(\Omega)}).
        \end{aligned}
    \end{equation*}
    In addition, the following superconvergence holds
    \begin{align*}
      \norm{J_h\bm{u}-\bm{u}_h}_h&\leq Ch^{k+1}\norm{\bm{u}}_{H^{k+2}(\Omega)}.
    \end{align*}

\end{theorem}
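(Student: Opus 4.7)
The plan is to chain the four estimates, taking Theorem~\ref{thm:convG}'s bound on $\Pi_h\bm{G}-\bm{G}_h$ as the pivot. The order of deduction will be: first the $L^2$ estimate on $\bm{G}$, from which the superconvergence on the velocity follows through Lemma~\ref{lem:bound_intpU_by_intpG}; next the $L^2$ estimate on $\bm{u}$ by feeding the superconvergence through Lemma~\ref{lem:discSobolev}; and finally the pressure estimate by inf-sup duality. Only the last step needs a genuinely new idea; the first three are essentially triangle inequalities applied to the appropriate interpolants.

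\textbf{Gradient and velocity.} I would decompose $\bm{G}-\bm{G}_h=(\bm{G}-\Pi_h\bm{G})+(\Pi_h\bm{G}-\bm{G}_h)$. The first piece is bounded by Lemma~\ref{lem:intperr}, and since $\bm{G}=\nu\nabla\bm{u}$ it picks up the factor $\nu$. The second piece is exactly what Theorem~\ref{thm:convG} controls; to convert the $\norm{\bm{u}-J_h\bm{u}}_{0,4,h}$ contribution into an $H^{k+2}$-quantity, one applies Lemma~\ref{lem:intperr} together with the Sobolev embedding $H^{k+2}(\Omega)\hookrightarrow W^{k+1,4}(\Omega)$ in two dimensions. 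The superconvergence bound then follows at once from Lemma~\ref{lem:bound_intpU_by_intpG}, whose $\nu^{-1}$ factor exactly cancels the $\nu$ in the $\bm{G}$-estimate. For $\norm{\bm{u}-\bm{u}_h}_{L^2}$ I add and subtract $J_h\bm{u}$: the interpolant piece is handled by Lemma~\ref{lem:intperr} and the discrete piece by Lemma~\ref{lem:discSobolev} with $q=2$, which converts the superconvergent $h$-norm bound into $L^2$.

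\textbf{Pressure.} I would write $p-p_h=(p-I_hp)+(I_hp-p_h)$; the first piece is handled by Lemma~\ref{lem:intperr}. For the second, apply the inf-sup condition in Lemma~\ref{lem:infsup} to get
\[
\norm{I_hp-p_h}_{1,h}\le C_b^{-1}\sup_{\bm{v}\in V_h}\frac{b_h^*(I_hp-p_h,\bm{v})}{\norm{\bm{v}}_{L^2(\Omega)}},
\]
then a discrete Poincar\'e inequality on the mean-zero element $I_hp-p_h\in Q_h^0$ promotes $\norm{\cdot}_{1,h}$ to $\norm{\cdot}_{L^2}$. Subtracting \eqref{eq:discform2} from its continuous counterpart and invoking Lemma~\ref{lem:intp} rewrites the numerator as
\[
b_h^*(I_hp-p_h,\bm{v})=B_h(\bm{G}-\bm{G}_h,\bm{v})+N_h(\bm{u};\bm{u},\bm{v})-N_h(\bm{u}_h;\bm{u}_h,\bm{v}),
\]
and each term on the right is to be bounded by a quantity of size $h^{k+1}\norm{\bm{v}}_{L^2}$ using the $\bm{G}$- and velocity-estimates already derived together with Lemma~\ref{lem:Nh-uu-uhuh}.

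\textbf{Main obstacle.} The genuinely delicate step is the pressure estimate. The inf-sup of Lemma~\ref{lem:infsup} has $\norm{\bm{v}}_{L^2}$, not $\norm{\bm{v}}_h$, in the denominator, so the right-hand side of the identity above must be bounded in terms of $\norm{\bm{v}}_{L^2}$ alone; a naive Cauchy--Schwarz on $B_h(\bm{G}-\bm{G}_h,\bm{v})$ yields only $\norm{\bm{v}}_h$, and converting via an inverse inequality would cost an $h^{-1}$ and kill a full power of $h$. The remedy is to first apply the orthogonality $B_h(\bm{G}-\Pi_h\bm{G},\bm{v})=0$ from Lemma~\ref{lem:intp} to reduce the estimate to a purely discrete quantity, then integrate by parts via the adjoint identity \eqref{eq:adjoint}, exploiting the staggered continuity built into $H_h$ and the divergence-free constraint $\bm{u}_h\in V_h^{\tdiv}$; the nonlinear difference is handled analogously by leaning on the upwind structure of $N_h$ and $\nabla\cdot\bm{u}_h=0$, so that no stray inverse factors appear.
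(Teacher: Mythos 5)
Your treatment of the first three estimates coincides with the paper's proof: triangle inequality through $\Pi_h\bm{G}$ plus Theorem~\ref{thm:convG}, Lemma~\ref{lem:intperr} and the embedding $H^{k+2}(\Omega)\hookrightarrow W^{k+1,4}(\Omega)$ for $\bm{G}$; Lemma~\ref{lem:bound_intpU_by_intpG} for the superconvergence; and Lemma~\ref{lem:discSobolev} applied to $J_h\bm{u}-\bm{u}_h$ for the $L^2$ velocity bound. No issues there.

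The pressure estimate is where your route diverges from the paper and where there is a genuine gap. You invoke the \emph{discrete} inf-sup of Lemma~\ref{lem:infsup}, whose denominator is $\norm{\bm{v}}_{L^2(\Omega)}$, so you must bound $B_h(\bm{G}_h-\bm{G},\bm{v})$ and $N_h(\bm{u}_h;\bm{u}_h,\bm{v})-N_h(\bm{u};\bm{u},\bm{v})$ by $Ch^{k+1}\norm{\bm{v}}_{L^2(\Omega)}$. None of the available tools delivers this: Lemma~\ref{lem:Nh-uu-uhuh} bounds the nonlinear difference by a quantity times $\norm{\bm{v}}_h$, and $B_h$ is naturally bounded by $\norm{\bm{H}}_{L^2(\Omega)}\norm{\bm{v}}_h$. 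Your proposed remedy does not repair this: after using $B_h(\bm{G}-\Pi_h\bm{G},\bm{v})=0$ and the adjoint identity \eqref{eq:adjoint}, you face $B_h^*(\bm{v},\Pi_h\bm{G}-\bm{G}_h)=\sum_\tau(\bm{v},\tdiv(\Pi_h\bm{G}-\bm{G}_h))_\tau-\sum_{e\in\mathcal{F}_{dl}}\einner{\bm{v}\cdot\nn,\jump{\nn\cdot(\Pi_h\bm{G}-\bm{G}_h)\nn}}_e$, and controlling $\tdiv(\Pi_h\bm{G}-\bm{G}_h)$ and the edge jumps in terms of $\norm{\Pi_h\bm{G}-\bm{G}_h}_{L^2(\Omega)}$ requires inverse/trace inequalities costing $h^{-1}$, i.e.\ you land at $O(h^k)$, one order short; the upwind structure of $N_h$ and $\nabla\cdot\bm{u}_h=0$ do not remove the $\nabla\bm{v}$ and $h_e^{-1}$-weighted jump contributions either. (You also need a discrete Poincar\'e inequality $\norm{q}_{L^2(\Omega)}\leq C\norm{q}_{1,h}$ on $Q_h^0$, an extra ingredient the paper never uses.) The paper sidesteps all of this by using the \emph{continuous} inf-sup for the divergence over $[H^1_0(\Omega)]^2$: it writes $C\norm{p-p_h}_{L^2(\Omega)}\leq\sup_{\bm{v}\in H^1_0}(\nabla\cdot\bm{v},p_h-p)/\norm{\bm{v}}_{H^1(\Omega)}$, splits off $p-I_hp$, replaces $\bm{v}$ by $J_h\bm{v}$ using $b_h(\bm{v}-J_h\bm{v},q)=0$ (Lemma~\ref{lem:intp}) and the stability $\norm{J_h\bm{v}}_h\leq C\norm{\bm{v}}_{H^1(\Omega)}$, so the relevant denominator becomes $\norm{J_h\bm{v}}_h$ and the error-equation terms are then bounded directly by Lemma~\ref{lem:Nh-uu-uhuh} and the $\bm{G}$-estimate without any inverse inequality. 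You should adopt this duality-through-$J_h$ argument (or supply a genuinely new mechanism) to close the pressure bound at order $h^{k+1}$.
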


\begin{proof}
    From Theorem~\ref{thm:conv} and Lemma~\ref{lem:intperr}, we have
    \begin{equation*}
        \begin{aligned}
        \norm{\bm{G}-\bm{G}_h}_{L^2(\Omega)}
        &\leq \norm{\bm{G}-\Pi_h\bm{G}}_{L^2(\Omega)}+\norm{\bm{G}_h-\Pi_h\bm{G}}_{L^2(\Omega)}\\
        &\leq C(\nu^2\norm{\bm{u}-J_h\bm{u}}_{0,4,h}+\norm{\bm{G}-\Pi_h\bm{G}}_{L^2(\Omega)})\\
        &\leq Ch^{k+1}(\nu^2\norm{\bm{u}}_{W^{k+1,4}(\Omega)}+\norm{\bm{G}}_{H^{k+1}(\Omega)}).
        \end{aligned}
    \end{equation*}
    Note that $\bm{G}=\nu\nabla\bm{u}$, the Sobolev embedding theorem yields
    \begin{equation*}
        \norm{\bm{G}-\bm{G}_h}_{L^2(\Omega)}\leq Ch^{k+1}(\nu^2\norm{\bm{u}}_{W^{k+1,4}(\Omega)}+\nu\norm{\bm{u}}_{H^{k+2}(\Omega)}).
    \end{equation*}
    Then the assumption on $\bm{f}_0$ implies
    \begin{equation*}
        \norm{\bm{G}-\bm{G}_h}_{L^2(\Omega)}\leq C\nu h^{k+1}\norm{\bm{u}}_{H^{k+2}(\Omega)}.
    \end{equation*}
    This and Lemma~\ref{lem:bound_intpU_by_intpG} yield the superconvergence
    \begin{equation*}
        \norm{J_h\bm{u}-\bm{u}_h}_h\leq C\nu^{-1}\norm{\bm{G}-\bm{G}_h}_h\leq Ch^{k+1}\norm{\bm{u}}_{H^{k+2}(\Omega)}.
    \end{equation*}
    The second assertion can be derived by using the triangle inequality and Lemma~\ref{lem:discSobolev}.
    \begin{equation*}
        \begin{aligned}
        \norm{\bm{u}-\bm{u}_h}_{L^2(\Omega)}
        &\leq \norm{\bm{u}-J_h\bm{u}}_{L^2(\Omega)}+\norm{\bm{u}_h-J_h\bm{u}}_{L^2(\Omega)}\\
        &\leq \norm{\bm{u}-J_h\bm{u}}_{L^2(\Omega)}+C\norm{\bm{u}_h-J_h\bm{u}}_h\\
        &\leq Ch^{k+1}\norm{\bm{u}}_{H^{k+2}(\Omega)}.
        \end{aligned}
    \end{equation*}
    Finally, we prove the error estimates for the pressure variable.
    By the inf-sup condition \cite{BraessBook} and integration by parts, the following holds 
    \begin{equation*}
        \begin{aligned}
        C\norm{p_h-p}_{L^2(\Omega)}
        &\leq \sup_{\bm{v}\in H^1_0(\Omega)}\frac{(\nabla\cdot\bm{v},p_h-p)}{\norm{\bm{v}}_{H^1(\Omega)}}\\
        &\leq \sup_{\bm{v}\in H^1_0(\Omega)}\frac{(\nabla\cdot\bm{v},p_h-I_hp)}{\norm{\bm{v}}_{H^1(\Omega)}}+\sup_{\bm{v}\in H^1_0(\Omega)}\frac{(\nabla\cdot \bm{v},I_hp-p)}{\norm{\bm{v}}_{H^1(\Omega)}}\\
        &\leq \sup_{\bm{v}\in H^1_0(\Omega)}\frac{-b_h(\bm{v},p_h-I_hp)}{\norm{\bm{v}}_{H^1(\Omega)}}+Ch^{k+1}\norm{p}_{H^{k+1}(\Omega)}.
        \end{aligned}
    \end{equation*}
    Therefore, it is enough to bound the first term on the right-hand side.
    Using the definition and the stability estimate of $J_h$, we obtain
    \begin{equation*}
        \sup_{\bm{v}\in H^1_0(\Omega)}\frac{-b_h(\bm{v},p_h-I_hp)}{\norm{\bm{v}}_{H^1(\Omega)}}\leq C\sup_{\bm{v}\in H^1_0(\Omega)}\frac{-b_h(J_h\bm{v},p_h-I_hp)}{\norm{J_h\bm{v}}_h}.
    \end{equation*}
    The error equation reads
    \begin{equation*}
        \sup_{\bm{v}\in H^1_0(\Omega)}\frac{-b_h(\bm{v},p_h-I_hp)}{\norm{\bm{v}}_{H^1(\Omega)}}\leq C\sup_{\bm{v}\in H^1_0(\Omega)}\frac{B_h(\bm{G}_h-\bm{G},J_h\bm{v})+N_h(\bm{u}_h;\bm{u}_h,J_h\bm{v})-N_h(\bm{u};\bm{u},J_h\bm{v})}{\norm{J_h\bm{v}}_h}.
    \end{equation*}
    Combining Lemma~\ref{lem:Nh-uu-uhuh} and the above estimates completes the proof.
\end{proof}

\begin{remark}
    The usual finite element methods for the (Navier-)Stokes problem yields velocity error estimates that depend on $\nu^{-1}\norm{p-I_hp}_0$.
    Thanks to the special finite element pairs designed, our method yields divergence free velocity, thereby the velocity error estimate is independent of $\nu$.
    The error estimates for velocity gradient and pressure also depend on the positive power of $\nu$, which is desirable when $\nu\ll 1$.
\end{remark}

\section{Numerical Experiments}\label{sec:num}
In this section, we verify our theoretical findings and demonstrate the performance of the proposed method. In particular, the pressure-robustness and optimality of the proposed method will be illustrated.
To solve the nonlinear equation, we used the fixed-point iteration: For $n=1,2,\cdots$, find $(\bm{G}_h^n,\bm{u}_h^n,p_h^n)\in H_h\times V_h\times Q_h$ such that
\begin{equation*}
    \begin{aligned}
        \nu^{-1}(\bm{G}_h^n,\bm{H}) + B_h^*(\bm{u}_h^n,\bm{H})&=\sum_{e\in\mathcal{F}_{pr}^b}\einner{\bm{g},\bm{H}\nn}_e&&\forall \bm H\in H_h,\\
        B_h(\bm{G}_h^n,\bm{v})+N_h(\bm{u}_h^{n-1};\bm{u}_h^n,\bm{v})+b_h^*(p^n,\bm{v})&=(\bm{f},\bm{v})+\sum_{e\in\mathcal{F}_h^b}\einner{|\bm{g}\cdot\nn|,\bm{g}\cdot\bm{v}}_e&&\forall\bm{v}\in V_h,\\
        b_h(\bm{u}_h^n,q)&=\sum_{e\in\mathcal{F}_{pr}^b}\einner{\bm{g}\cdot\nn, q}_e&&\forall q\in Q_h
    \end{aligned}
\end{equation*}
with known $\bm{u}_h^{n-1}$ and the initial guess is chosen to be zero.
The stopping criteria is set to a successive maximum error at nodal points with the tolerance $10^{-7}$.
In this section, $\|\cdot\|$ in the legend of each figure represents $\|\cdot\|_{L^2(\Omega)}$.

\subsection{Taylor vortex}
\begin{figure}
    \centering
    \includegraphics[width=0.45\textwidth]{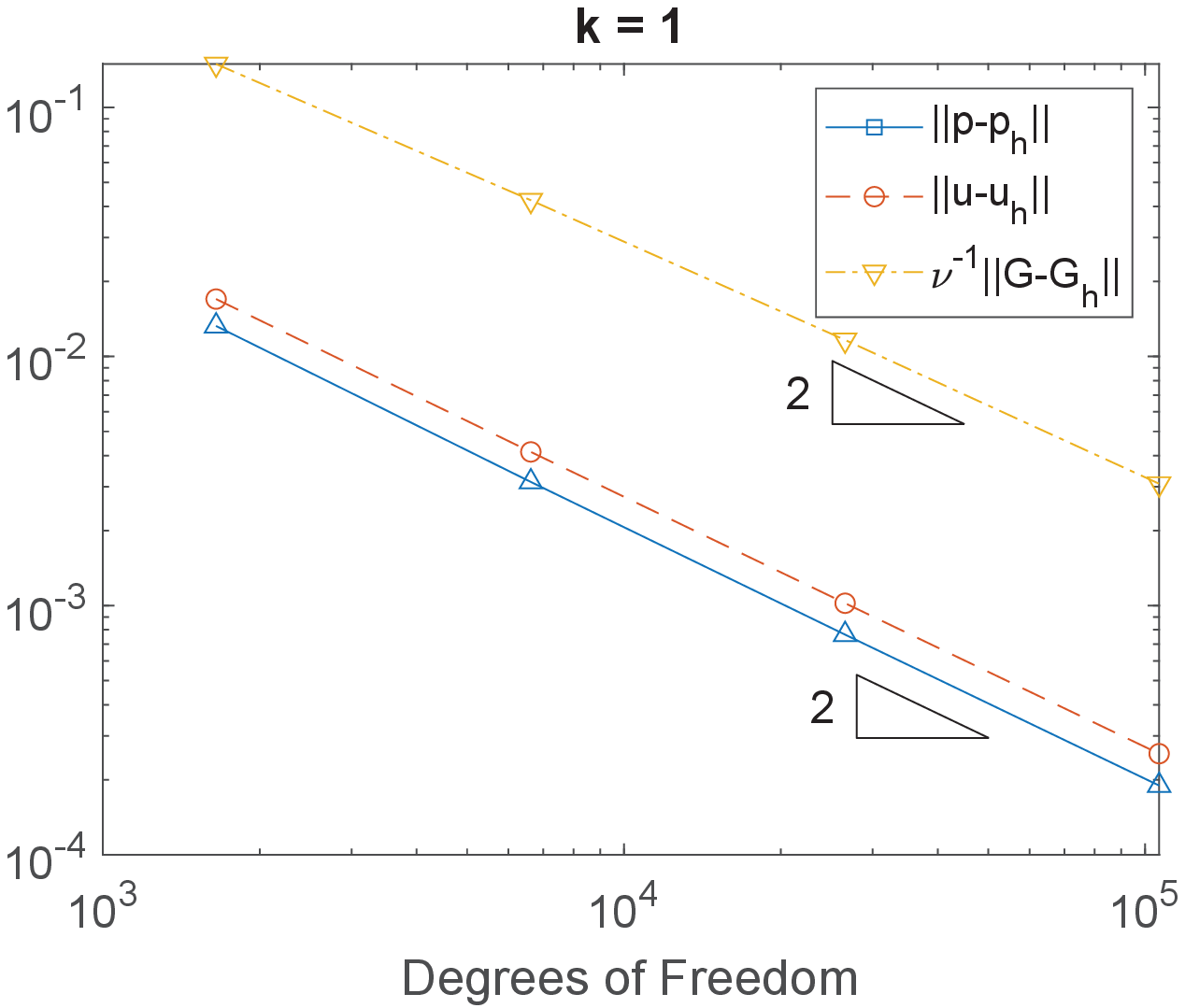}
    \includegraphics[width=0.45\textwidth]{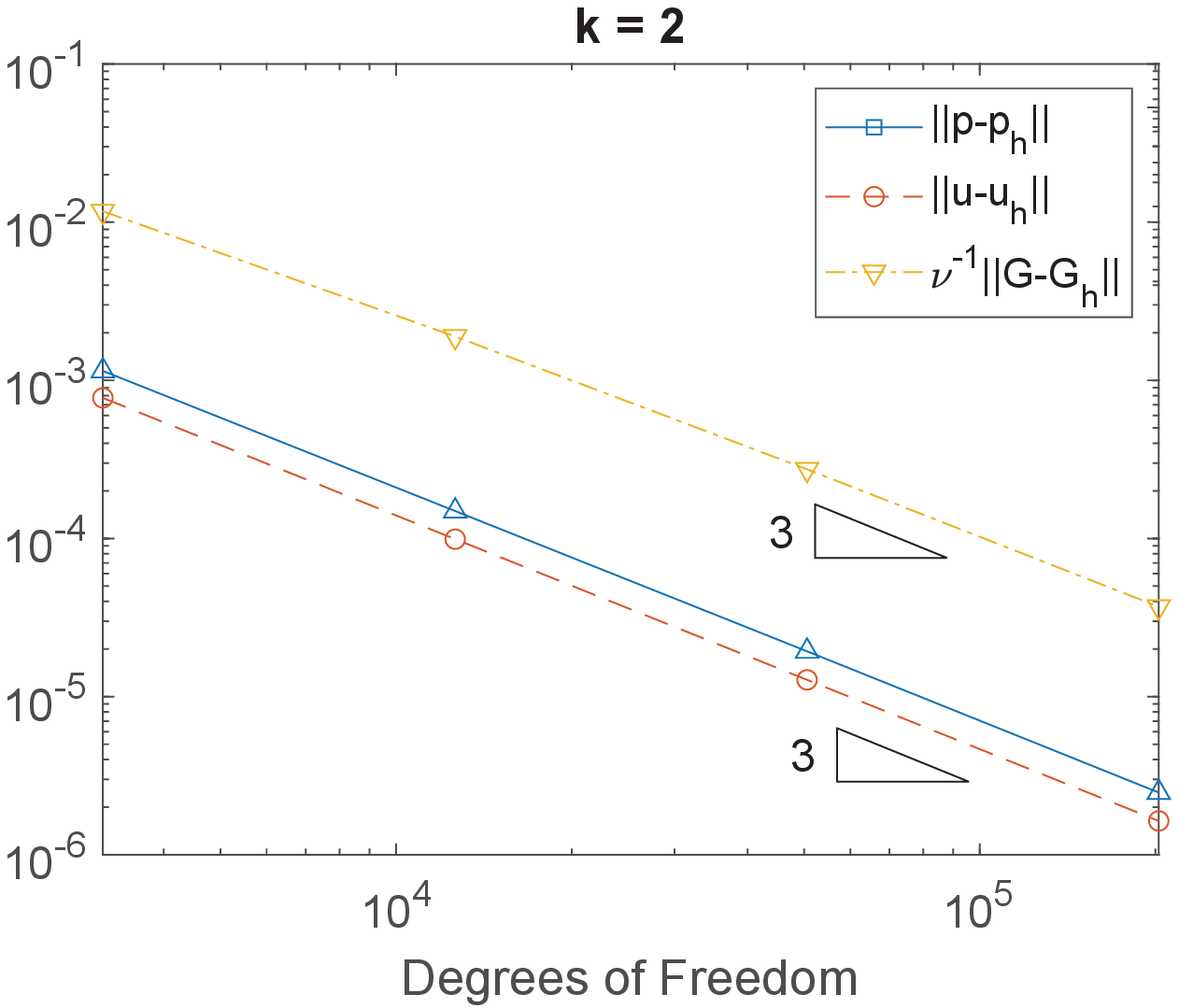}
    \caption{Convergence history with respect to degrees of freedom with $k=1$ (left) and $k=2$ (right) for the Taylor vortex example.}
    \label{fig:conv_taylor_NS}
\end{figure}
\begin{figure}
    \centering
    \includegraphics[width=0.45\textwidth]{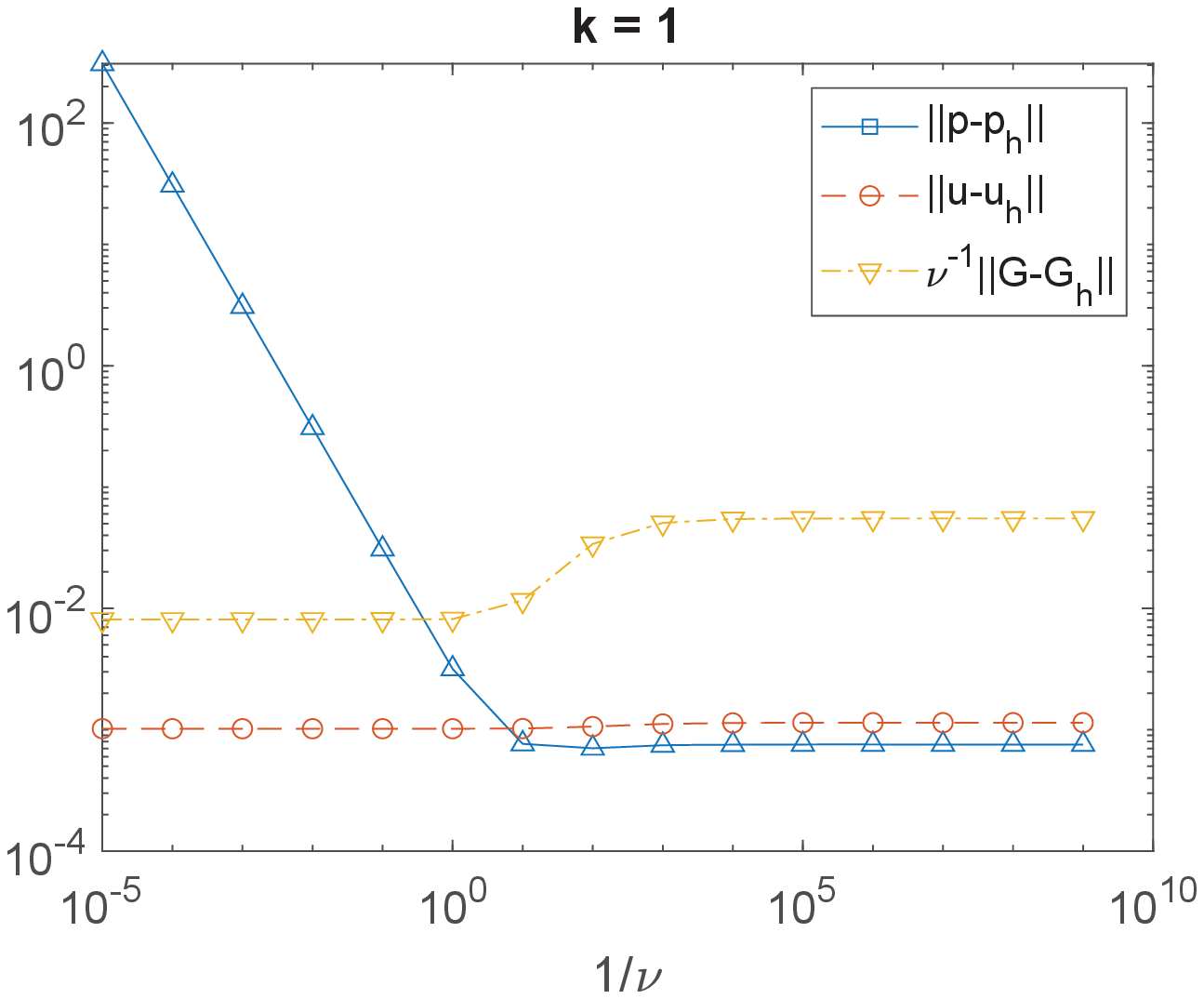}
    \includegraphics[width=0.45\textwidth]{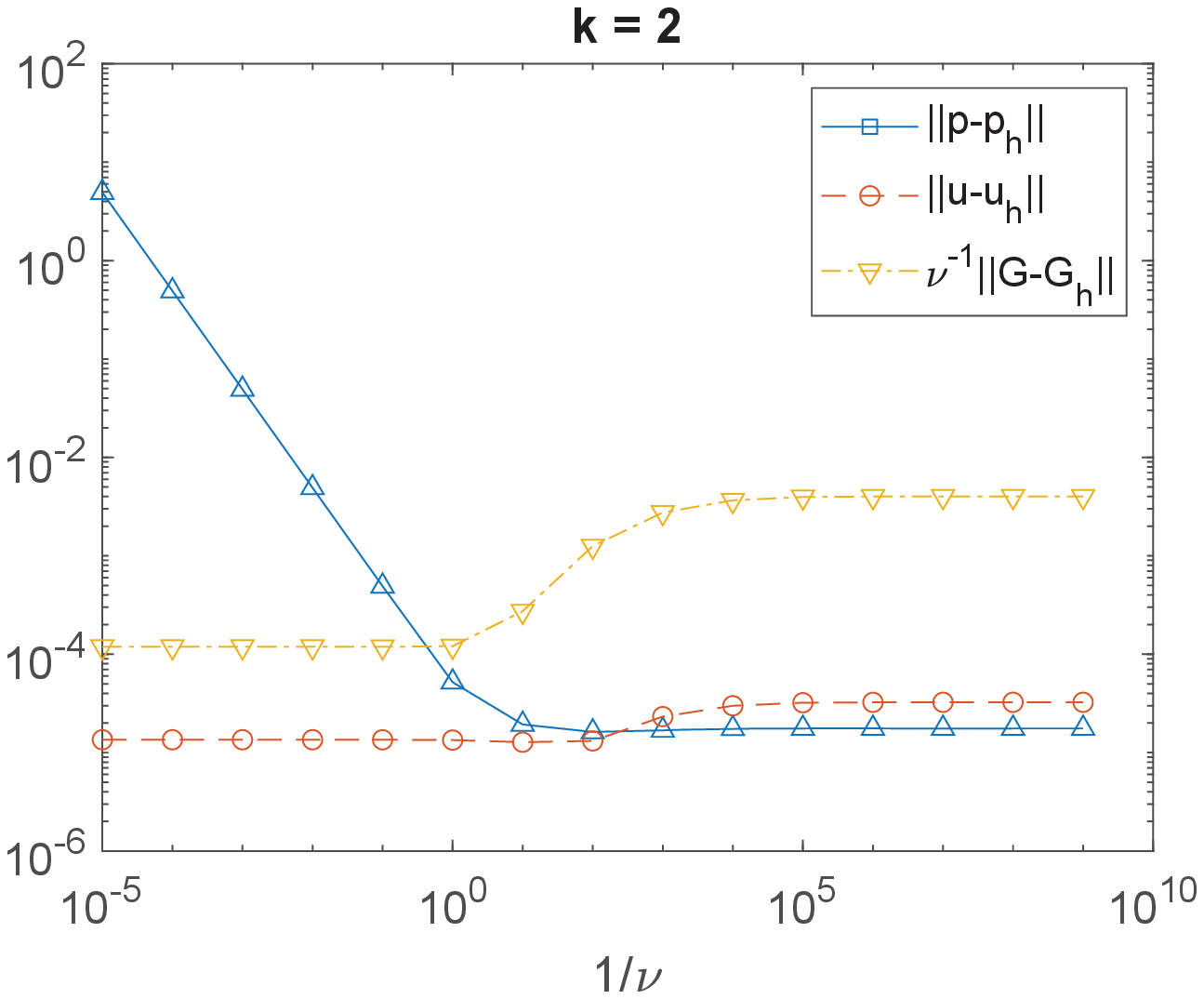}
    \caption{$L^2$-error with various $\nu$ for the Taylor vortex example of the Navier-Stokes equations. Here, $h=1/16$ is used with $k=1$ (left) and $k=2$ (right).}
    \label{fig:taylor_NS}
\end{figure}
\begin{figure}
    \centering
    \includegraphics[width=0.45\textwidth]{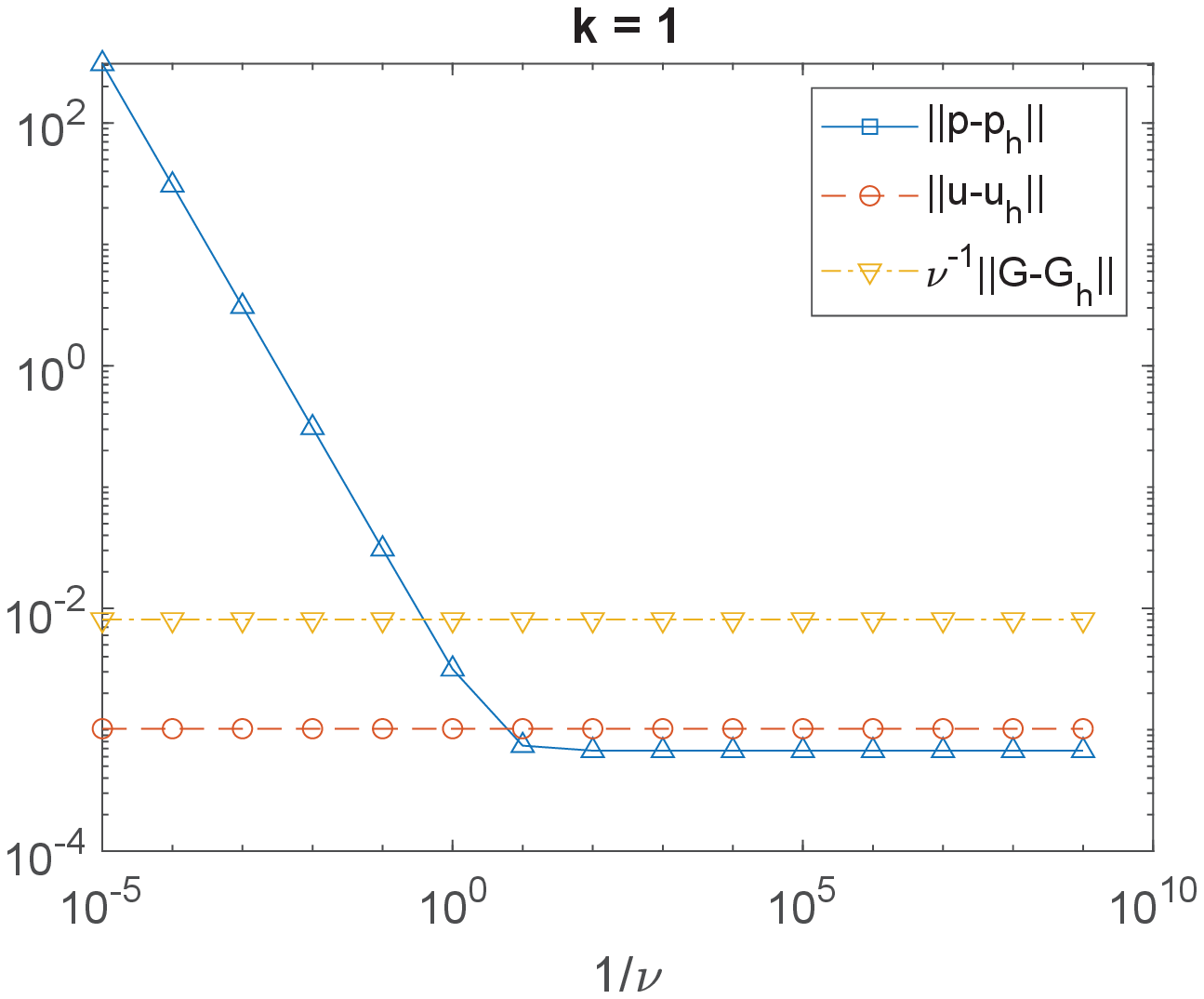}
    \includegraphics[width=0.45\textwidth]{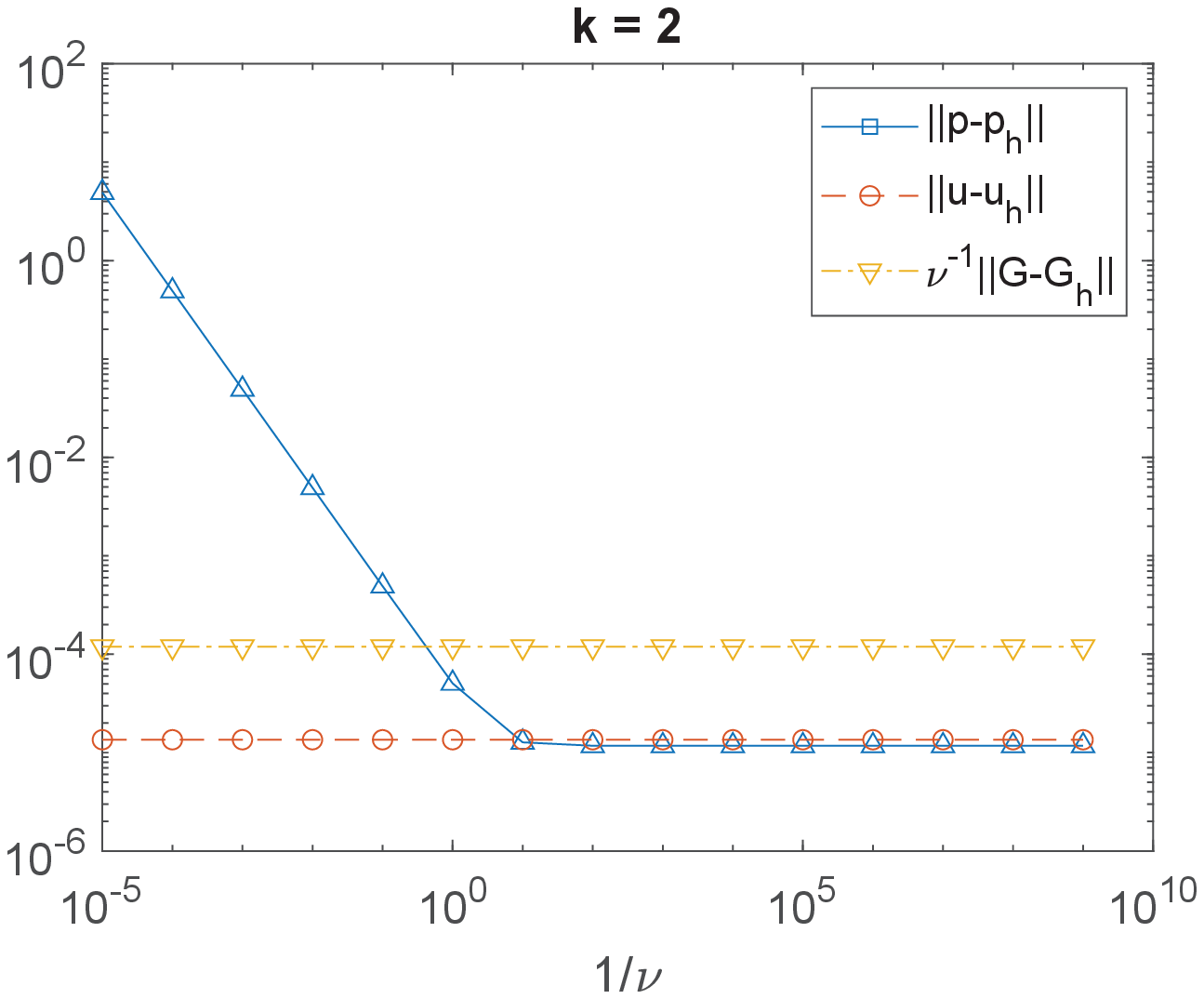}
    \caption{$L^2$-error with various $\nu$ for the Taylor vortex example of the Stokes equations. Here, $h=1/16$ is used with $k=1$ (left) and $k=2$ (right).}
    \label{fig:taylor_Stokes}
\end{figure}

In this example, the optimal convergence and the pressure-robustness of the solution are studied.
Consider a solution $(\bm{u},p)$ defined by
\begin{equation*}
    \bm{u}=\begin{bmatrix}-\cos(\pi x)\sin(\pi y)\\\phantom{-}\sin(\pi x)\cos(\pi y)\end{bmatrix},
    \quad p=-\frac{\cos(2\pi x)+\cos(2\pi y)}{4}.
\end{equation*}
The velocity gradient $\bm{G}$ and the force $\bm{f}$ can be computed from the solution.

In the first experiment, we set $\nu=10^{-1}$.
To observe the convergence behavior of the discrete solution, uniform rectangular grids with $h=2^{-2},\;2^{-3},\;\cdots,\;2^{-5}$ are considered.
In Figure~\ref{fig:conv_taylor_NS}, the convergence history against the number of degrees of freedom with polynomial orders $k=1$ and $k=2$ is depicted.
Optimal convergence rates derived in Theorem~\ref{thm:conv} for all variables are observed for both $k=1$ and $k=2$.

The second experiment is performed to observe the pressure-robustness of the proposed method.
In Theorem~\ref{thm:conv}, both velocity and gradient errors are independent of pressure error.
Also, we expect that the error is bounded by interpolation error independent of $\nu$ when $\bm{f}$ satisfies the small data assumption.
In Figure~\ref{fig:taylor_NS}, $L^2$-errors with varying $\nu$ are observed.
We can observe that the pressure error depends on $\nu$ when $\nu$ is not sufficiently small which can be attributed to the dominant $\nu\norm{\bm{u}-I_h\bm{u}}_{L^2(\Omega)}$.
However, the error shows $\nu$-independent behavior for sufficiently small $\nu$, i.e., the interpolation error of the pressure dominates the pressure error.
For the velocity error, we can observe that the velocity error is independent of $\nu$ when $\nu$ is sufficiently small.
While the convergence analysis without small data assumption for $\bm{f}$ implies that the error may depend on $\nu^{-1}$, the error shows independent behavior for sufficiently small $\nu$.

The dependence of gradient error on $\nu^{-1}$ comes from the nonlinear convective term $N_h(\cdot;\cdot,\cdot)$.
Therefore, we can expect that our formulation leads to $\nu$-independent error for velocity and velocity gradient when the Stokes problem is considered.
In Figure~\ref{fig:taylor_Stokes}, we indeed have $\nu$-independent error behavior for both $k=1$ and $k=2$.

\subsection{No-flow example}
\begin{figure}
    \centering
    \includegraphics[width=0.45\textwidth]{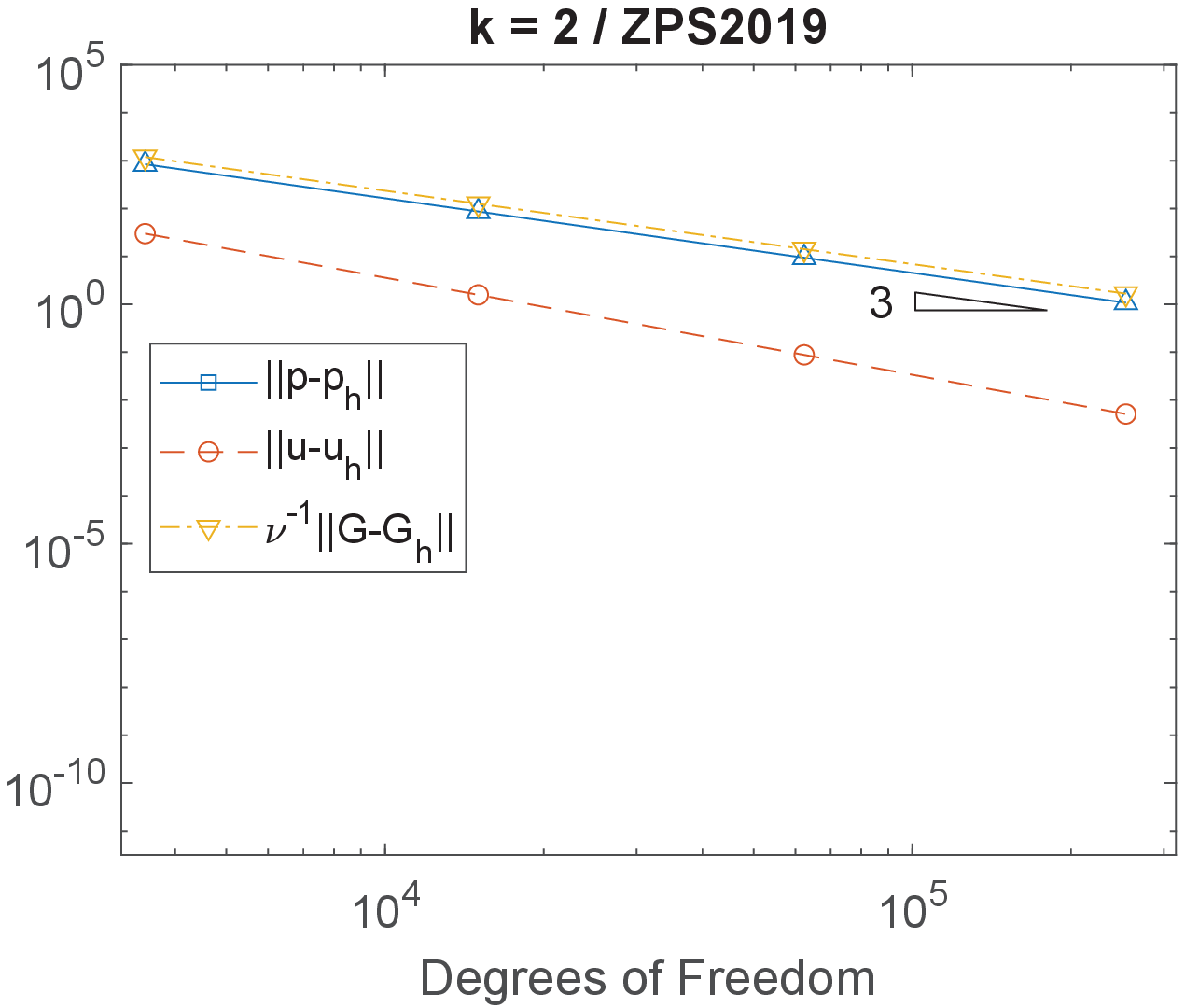}
    \includegraphics[width=0.45\textwidth]{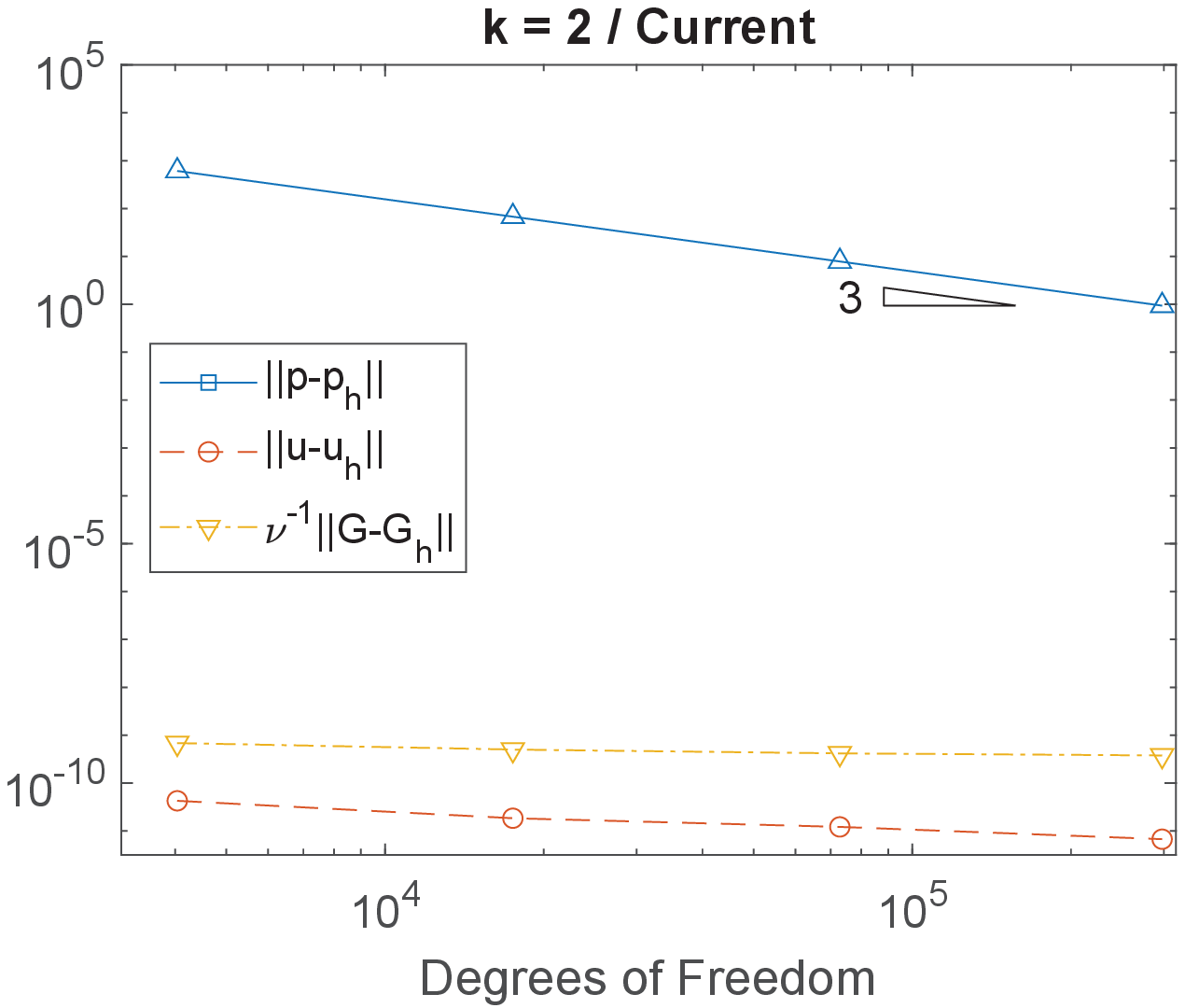}
    \caption{Convergence history of $L^2$-norm for the no-flow example with $\lambda=10^7$ and $\nu=1$. The formulation in \cite{Zhao2019} (left) and the current formulation (right) are used, respectively.}
    \label{fig:noflow_conv}
\end{figure}
\begin{figure}
    \centering
    \includegraphics[width=0.45\textwidth]{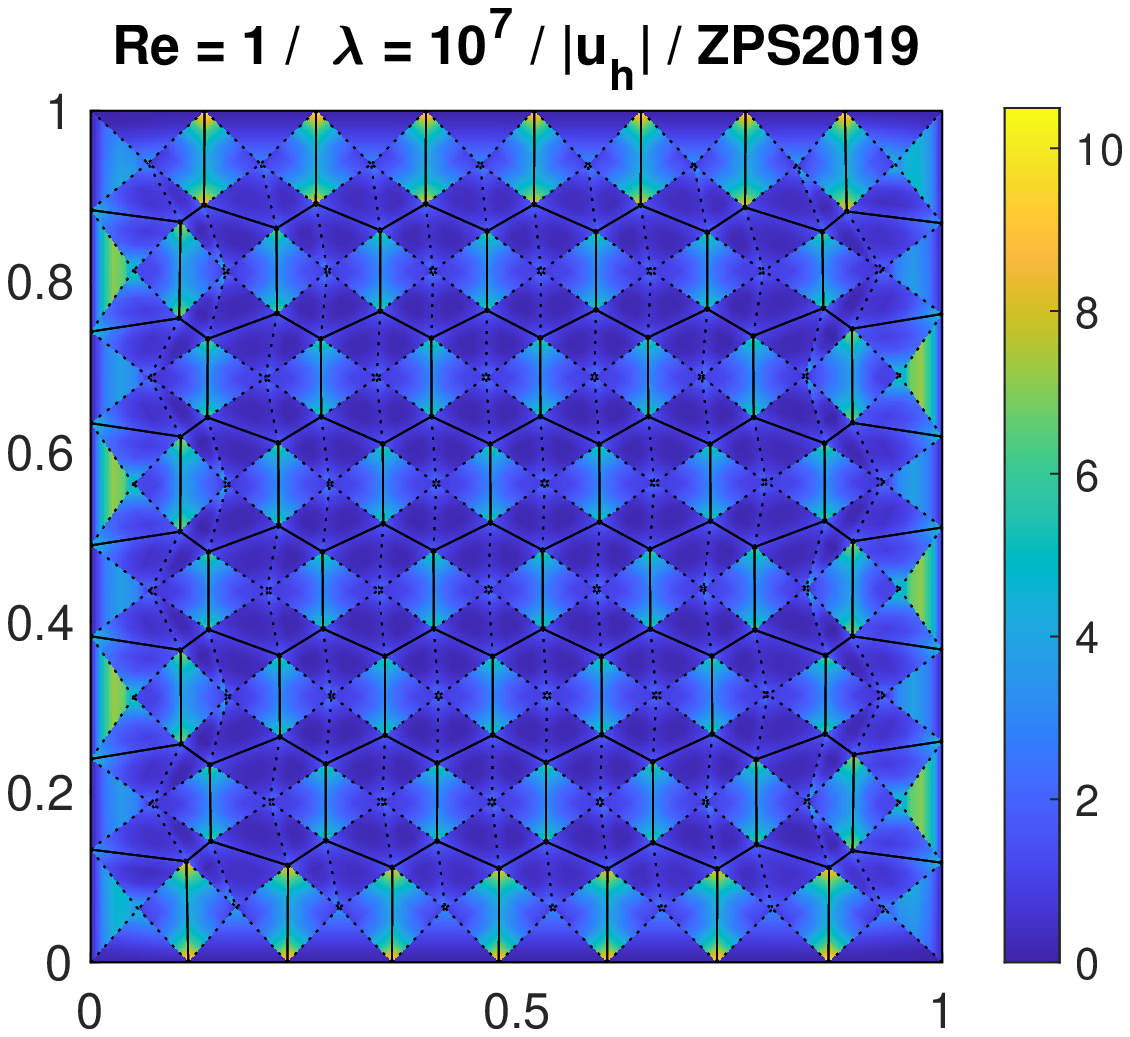}
    \includegraphics[width=0.45\textwidth]{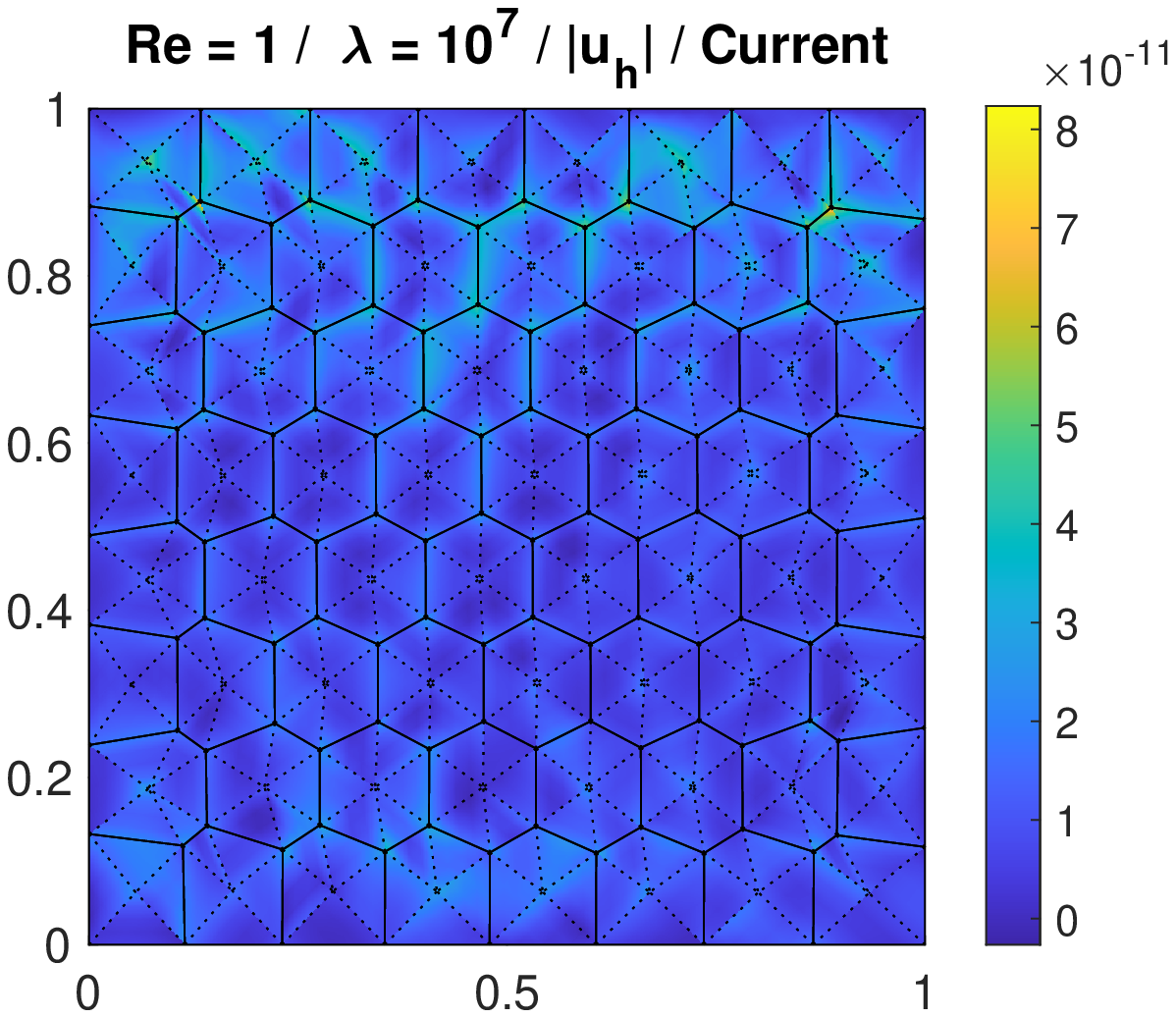}
    \caption{No-flow example. Velocity magnitudes obtained with the formulation given in \cite{Zhao2019} (left) and current (right). Both approximations are obtained with a quasi-uniform polygonal mesh of $h\approx1/8$.}
    \label{fig:noflow}
\end{figure}
In this example, we investigate the pressure-robustness more thoroughly.
Consider the following no-flow example
\begin{equation*}
    \bm{u}=\bm{0},\quad p=\lambda(y^3 - y^2/2 + y - 7/12),
\end{equation*}
where $\lambda=10^7$.
Since $\bm{u}\in V_h$, we have $\norm{\bm{u}-J_h\bm{u}}_{L^2(\Omega)}=0$.
Therefore, one can expect that a pressure-robust method produces a discrete velocity that coincides with the exact velocity.
In this example, we also implemented the formulation derived in \cite{Zhao2019} for comparison.
Since \cite{Zhao2019} considered the Stokes equations with $k=0$, we extended the formulation proposed therein to arbitrary order polynomial and handled the nonlinear term as in the current formulation.
In Figure~\ref{fig:noflow_conv}, the convergence history of $L^2$-errors obtained with quasi-uniform polygonal meshes and quadratic polynomial spaces is depicted.
While the discrete velocity obtained from \cite{Zhao2019} converges, its magnitude is almost the same as the pressure error.
This shows that the velocity error depends on not only $(\bm{u}-J_h\bm{u})$ but also $(p-I_hp)$.
On the other hand, the discrete velocity produced by the proposed method is around $10^{-10}$ regardless of the mesh size.
Here, $10^{-10}$ is compatible with the machine precision multiplied by the condition number of the resulting matrix.
We also included the velocity magnitude profiles, in Figure~\ref{fig:noflow}, obtained from the two formulations when $h\approx 1/8$.
From those two observations, we can conclude that the proposed method indeed leads to a pressure-robust velocity approximation.

\subsection{Lid-driven cavity}
\begin{figure}
    \centering
    \includegraphics[width=0.32\textwidth]{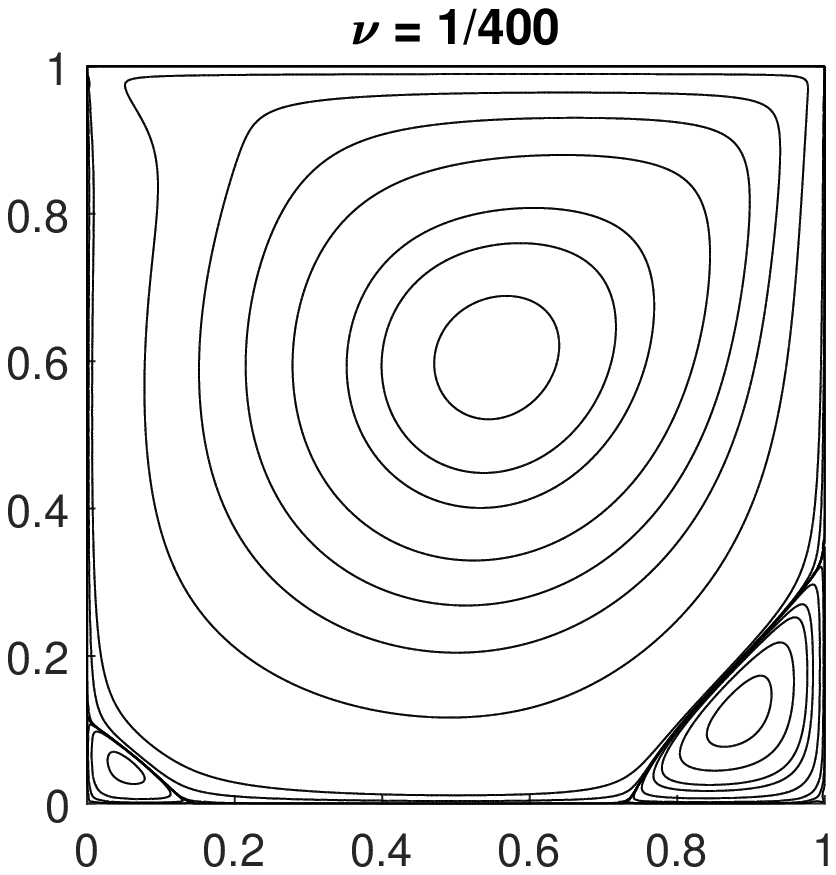}
    \includegraphics[width=0.32\textwidth]{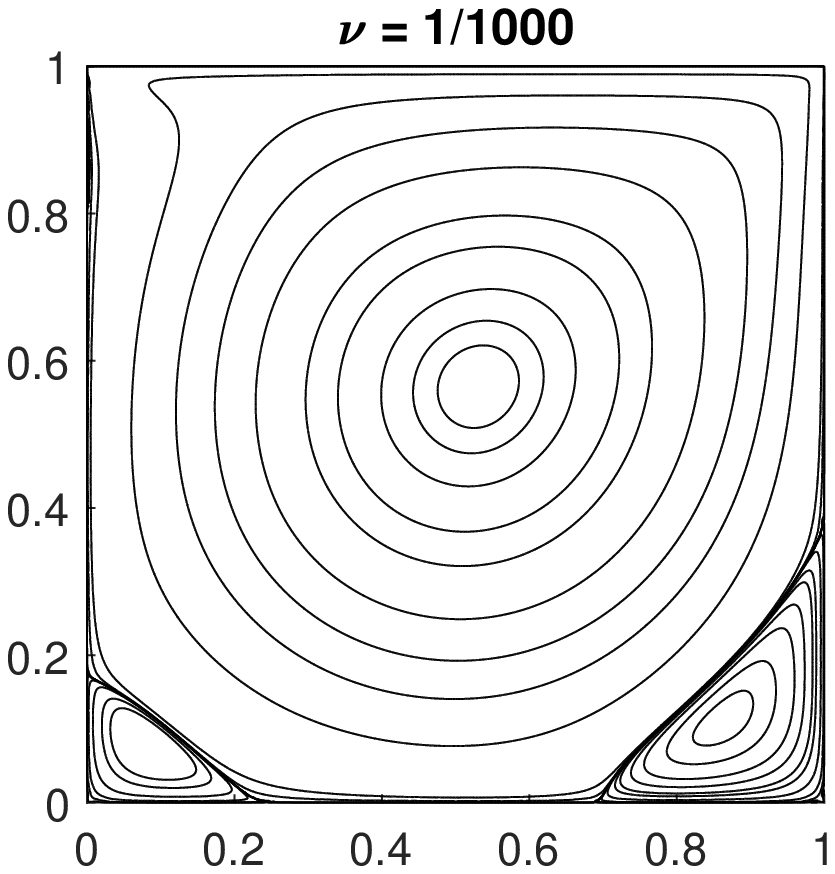}
    \includegraphics[width=0.32\textwidth]{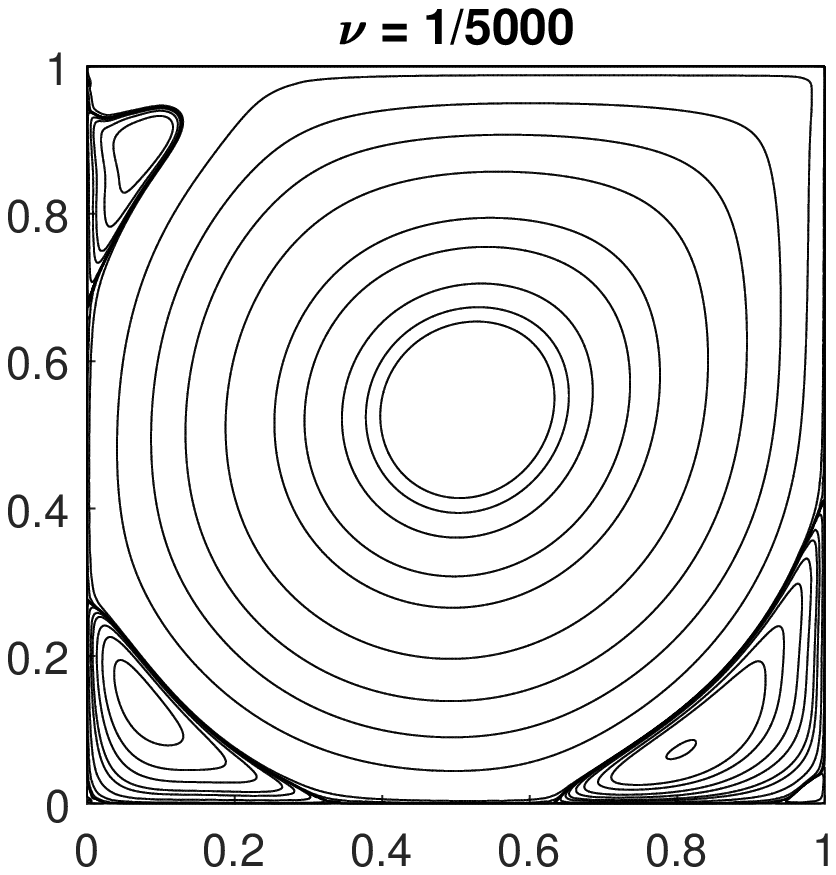}
    \caption{Streamline of lid-driven cavity with $h=1/32$ and $k=2$.}
    \label{fig:cavity}
\end{figure}
In the last example, we consider the famous lid-driven cavity problem to demonstrate the performance of the proposed method.
Consider a rectangular domain $(0,1)\times(0,1)$.
We choose $\bm{f}=\bm{0}$ and the Dirichlet boundary condition is given by
\begin{equation*}
    \bm{u}=\begin{cases}[1,0]^T&\text{on }(0,1)\times\{y=1\},\\\bm{0}&\text{otherwise.}\end{cases}
\end{equation*}
Piecewise quadratic polynomials on a uniform rectangular grid with $h=1/32$ is used.
In Figure~\ref{fig:cavity}, contour plots of streamfunctions for $\nu^{-1}=400,\;1000,\;5000$ are displayed.
Here, the contour levels are chosen as in \cite{ghia1982}.
We can observe that the streamlines are qualitatively similar to that of \cite{ghia1982}.

\section{Conclusion}\label{sec:con}
In this study, we developed a high-order polygonal staggered DG method for the Navier-Stokes equations.
A new finite element pair is introduced in the spirit of \cite{Zhao2020c}, which enables us to achieve pressure robustness.
It is worth mentioning that the proposed scheme yields a divergence free velocity approximation without any postprocessing which is a desirable feature in the applications of (Navier-)Stokes equations.
Another novel contribution lies in the design a new nonlinear convective term that earns non-negativity.
A full convergence analysis is carried out, showing that the error estimate for velocity is independent of pressure and of the viscosity.
Finally, several numerical experiments are carried out, and we can observe that our scheme is indeed pressure robust. The numerical experiments indicate that our method is a good candidate for practical applications in particular for problems with high Reynolds number.

\appendix
\section{Proof of Lemma~\ref{lem:Nh-uu-uhuh}}\label{sec:app}
In this appendix, we prove Lemma~\ref{lem:Nh-uu-uhuh}.
\begin{proof}
    By adding and subtracting $N_h(J_h\bm{u};J_h\bm{u},\bm{v}_h)$, we obtain
    \begin{equation}\label{eq:Nh_etas}
        \begin{aligned}
            N_h(\bm{u};\bm{u},\bm{v}_h)-N_h(\bm{u}_h;\bm{u}_h,\bm{v}_h)
            &=N_h(\bm{e}_*;\bm{u},\bm{v}_h)+N_h(J_h\bm{u};\bm{e}_*,\bm{v}_h)\\
            &\quad N_h(\bm{e}_h;\bm{u}_h,\bm{v}_h)+N_h(J_h\bm{u};\bm{e}_h,\bm{v}_h).
        \end{aligned}
    \end{equation}
    We will bound each term separately.
    First we have
    \begin{equation*}
        \begin{aligned}
            N_h(\bm{e}_*;\bm{u},\bm{v}_h)
            &=-(\bm{u}\otimes\bm{e}_*,\nabla\bm{v}_h)+\einner{\avg{\bm{e}_*\cdot\nn},\avg{\bm{u}}\cdot\jump{\bm{v}_h}}_{\mathcal{F}_h}.
        \end{aligned}
    \end{equation*}
    The first term on the right-hand side can be bounded using H\"older's inequality and the Sobolev inequality
    \begin{equation*}
        -(\bm{u}\otimes\bm{e}_*,\nabla\bm{v}_h)\leq C\norm{\bm{u}}_{H^1(\Omega)}\norm{\bm{e}_*}_{L^4(\mathcal{T}_h)}\norm{\nabla\bm{v}_h}_{L^2(\mathcal{T}_h)}.
    \end{equation*}
    Let us focus on the second term.
    For $e\in \mathcal{F}_h$, the trace inequality \eqref{eq:traceL4} implies that
    \begin{equation*}
        \begin{aligned}
            \einner{\avg{\bm{e}_*\cdot\nn},\avg{\bm{u}}\cdot\jump{\bm{v}_h}}_e
            &\leq \norm{\avg{\bm{e}_*\cdot\nn}}_{L^4(e)}\norm{\avg{\bm{u}}}_{L^4(e)}\norm{\jump{\bm{v}_h}}_{L^2(e)}\\
            &\leq C\left(\norm{\bm{u}}_{L^4(T)}
            +h_T^{1/2}\norm{\bm{u}}_{H^1(T)}\right)h_T^{1/4}\norm{\avg{\bm{e}_*\cdot\nn}}_{L^4(e)}h_T^{-1/2}\norm{\jump{\bm{v}_h}}_{L^2(e)},
        \end{aligned}
    \end{equation*}
    where $T$ is a neighboring triangle of $e$.
    By using shape regularity $h_T\leq \rho h_e$, summing over edges yields
    \begin{equation*}
        \begin{aligned}
            \einner{\avg{\bm{e}_*\cdot\nn},\avg{\bm{u}}\cdot\jump{\bm{v}_h}}_{\mathcal{F}_h}\leq C\norm{\bm{u}}_{L^4(\Omega)}\left(\sum_{e\in\mathcal{F}_h}h_e\norm{\avg{\bm{e}_*\cdot\nn}}_{L^4(e)}^4\right)^{1/4}\left(\sum_{e\in\mathcal{F}_h}h_e^{-1}\norm{\jump{\bm{v}_h}}_{L^2(e)}^2\right)^{1/2}.
        \end{aligned}
    \end{equation*}
    By combining these, we obtain
    \begin{equation*}
        N_h(\bm{e}_*;\bm{u},\bm{v}_h)\leq C\norm{\bm{u}}_{H^1(\Omega)}\norm{\bm{e}_*}_{0,4,h}\norm{\bm{v}_h}_h,
    \end{equation*}
    where $\norm{\cdot}_{0,4,h}$ is defined in \eqref{eq:discL4norm}.
    Similarly, we obtain
    \begin{equation*}
        N_h(J_h\bm{u};\bm{e}_*,\bm{v}_h)\leq C\norm{J_h\bm{u}}_h\norm{\bm{e}_*}_{0,4,h}\norm{\bm{v}_h}_h\leq C\norm{\bm{u}}_{H^1(\Omega)}\norm{\bm{e}_*}_{0,4,h}\norm{\bm{v}_h}.
    \end{equation*}
    It remains to estimate the last two terms on the right hand side of \eqref{eq:Nh_etas} . Observe that
    \begin{equation*}
        \begin{aligned}
            N_h(\bm{e}_h;\bm{u}_h,\bm{v}_h)
            &=-(\bm{u}_h\otimes\bm{e}_h,\nabla\bm{v}_h)+\einner{\avg{\bm{e}_h\cdot\nn},\avg{\bm{u}_h}\cdot\jump{\bm{v}_h}}_{\mathcal{F}_h}+\einner{|\avg{\bm{e}_h\cdot\nn}|,\jump{\bm{u}_h}\cdot\jump{\bm{v}_h}}_{\mathcal{F}_h}\\
            &\leq \norm{\bm{u}_h}_{L^4(\Omega)}\norm{\bm{e}_h}_{L^4(\Omega)}\norm{\nabla\bm{v}_h}_{L^2(\mathcal{T}_h)}\\
            &\quad+\sum_{e\in\mathcal{F}_h}\left(\norm{\avg{\bm{u}_h}}_{L^4(e)} + \norm{\jump{\bm{u}_h}}_{L^4(e)}\right)\norm{\avg{\bm{e}_h\cdot\nn}}_{L^4(e)}\norm{\jump{\bm{v}_h}}_{L^2(e)}.
        \end{aligned}
    \end{equation*}
    The discrete trace inequality \eqref{eq:discTrace} yields
    \begin{equation*}
        \begin{aligned}
        N_h(\bm{e}_h;\bm{u}_h,\bm{v}_h)
        &\leq \norm{\bm{u}_h}_{L^4(\Omega)}\norm{\bm{e}_h}_{L^4(\Omega)}\norm{\nabla\bm{v}_h}_{L^2(\mathcal{T}_h)}\\
        &\quad+C\norm{\bm{e}_h}_{L^4(\Omega)}\norm{\bm{u}_h}_{L^4(\Omega)}\left(\sum_{e\in\mathcal{F}_h}h_e^{-1}\norm{\jump{\bm{v}_h}}_{L^2(e)}^2\right)^{1/2}.
        \end{aligned}
    \end{equation*}
    By \eqref{eq:Lq}, we obtain
    \begin{equation*}
        N_h(\bm{e}_h;\bm{u}_h,\bm{v}_h)\leq C\norm{\bm{u}_h}_h\norm{\bm{e}_h}_h\norm{\bm{v}_h}_h.
    \end{equation*}
    Similarly, we obtain
    \begin{equation*}
        N_h(J_h\bm{u};\bm{e}_h,\bm{v}_h)\leq C\norm{J_h\bm{u}}_h\norm{\bm{e}_h}_h\norm{\bm{v}_h}_h\leq C\norm{\bm{u}}_{H^1(\Omega)}\norm{\bm{e}_h}_h\norm{\bm{v}_h}_h.
    \end{equation*}
    Combining the preceding estimates with Lemma~\ref{lem:bound_by_f0} and Theorem~\ref{thm:existence} concludes the proof.
\end{proof}

\section*{Acknowledgments}

The research of Eric Chung is partially supported by the Hong Kong RGC General Research Fund (Project numbers 14304719 and 14302018)
and the CUHK Faculty of Science Direct Grant 2020-21.
The research of Eun-Jae Park was supported by the National Research Foundation of Korea (NRF) grant funded by the Ministry of Science and ICT (NRF-2015R1A5A1009350 and NRF-2019R1A2C2090021).

\bibliographystyle{siamplain}
\bibliography{references}

\begin{thebibliography}{10}

\bibitem{Arndt2014}
{\sc D.~Arndt, H.~Dallmann, and G.~Lube}, {\em Local projection {FEM}
  stabilization for the time‐dependent incompressible {N}avier–{S}tokes
  problem}, Numer. Methods Partial Differential Equations,  (2015),
  pp.~1224--1250.

\bibitem{veiga2013}
{\sc L.~{Beir\~{a}o da Veiga}, F.~Brezzi, A.~Cangiani, G.~Manzini, L.~D.
  Marini, and A.~Russo}, {\em Basic principles of virtual element methods},
  Math. Models Methods Appl. Sci., 23 (2013), pp.~199--214.

\bibitem{BraessBook}
{\sc D.~Braess}, {\em Finite Elements: Theory, Fast Solvers, and Applications
  in Elasticity Theory}, Cambridge University Press, 2007.

\bibitem{Brennecke15}
{\sc C.~Brennecke, A.~Linke, C.~Merdon, and J.~Sch\"oberl}, {\em Optimal and
  pressure-independent {$L^2$} velocity error estimates for a modified
  {C}rouzeix-{R}aviart {S}tokes element with {BDM} reconstructions}, J. Comput.
  Math., 33 (2015), pp.~191--208.

\bibitem{Burman2007}
{\sc E.~Burman and M.~A. Fern\'andez}, {\em Continuous interior penalty finite
  element method for the time-dependent {N}avier–{S}tokes equations: space
  discretization and convergence}, Numer. Math.,  (2007), pp.~39--77.

\bibitem{Cangiani2017}
{\sc A.~Cangiani, Z.~Dong, E.~H. Georgoulis, and P.~Houston}, {\em hp-version
  Discontinuous {G}alerkin Methods on Polygonal and Polyhedral Meshes},
  Springer Briefs in Mathematics, Springer, Cham, 2017.

\bibitem{Cangiani17}
{\sc A.~Cangiani, E.~H. Georgoulis, T.~Pryer, and O.~J. Sutton}, {\em A
  posteriori error estimates for the virtual element method}, Numer. Math., 137
  (2017), pp.~857--893.

\bibitem{Quiroz2020}
{\sc D.~{Castanon Quiroz} and D.~A. {Di Pietro}}, {\em A hybrid high-order
  method for the incompressible {N}avier–{S}tokes problem robust for large
  irrotational body forces}, Comput. Math. Appl., 79 (2020), pp.~2655--2677.

\bibitem{Cesmelioglu2017}
{\sc A.~Cesmelioglu, B.~Cockburn, and W.~Qiu}, {\em Analysis of a hybridizable
  discontinuous {G}alerkin method for the steady-state incompressible
  {N}avier-{S}tokes equations}, Math. Comp., 86 (2017), pp.~1643--1670.

\bibitem{chen2018}
{\sc L.~Chen and F.~Wang}, {\em A divergence free weak virtual element method
  for the {S}tokes problem on polytopal meshes}, Journal of Scientific
  Computing,  (2019), pp.~864--–886.

\bibitem{cheung2015}
{\sc S.~W. Cheung, E.~Chung, H.~H. Kim, and Y.~Qian}, {\em Staggered
  discontinuous {G}alerkin methods for the incompressible {N}avier-{S}tokes
  equations}, J. Comput. Phys., 302 (2015), pp.~251--266.

\bibitem{chung2013b}
{\sc E.~T. Chung and P.~{Ciarlet Jr.}}, {\em A staggered discontinuous
  {G}alerkin method for wave propagation in media with dielectrics and
  meta-materials}, J. Comput. Appl. Math., 239:189--207 (2013).

\bibitem{ChungDu17}
{\sc E.~T. Chung, J.~Du, and C.~Y. Lam}, {\em Discontinuous {G}alerkin methods
  with staggered hybridization for linear elastodynamics}, Comput. Math. Appl.,
  74 (2017), pp.~1198--1214.

\bibitem{chung2006}
{\sc E.~T. Chung and B.~Engquist}, {\em Optimal discontinuous {G}alerkin
  methods for wave propagation}, SIAM J. Numer. Anal., 44 (2006),
  pp.~2131--2158.

\bibitem{chung2009}
{\sc E.~T. Chung and B.~Engquist}, {\em Optimal discontinuous {G}alerkin
  methods for the acoustic wave equation in higher dimensions}, SIAM J. Numer.
  Anal., 47 (2009), pp.~3820--3848.

\bibitem{chung2013}
{\sc E.~T. Chung, H.~H. Kim, and O.~B. Widlund}, {\em Two-level overlapping
  {S}chwarz algorithms for a staggered discontinuous {G}alerkin method}, SIAM
  J. Numer. Anal., 51 (2013), pp.~47--67.

\bibitem{ChungLee11}
{\sc E.~T. Chung and C.~S. Lee}, {\em {A staggered discontinuous {G}alerkin
  method for the curl–curl operator}}, IMA J. Numer. Anal., 32 (2011),
  pp.~1241--1265.

\bibitem{ChungQiu17}
{\sc E.~T. Chung and W.~Qiu}, {\em Analysis of an {SDG} method for the
  incompressible {N}avier--{S}tokes equations}, SIAM J. Numer. Anal., 55
  (2017), pp.~543--569.

\bibitem{Cockburn05}
{\sc B.~Cockburn, G.~Kanschat, and D.~Schötzau}, {\em A locally conservative
  {LDG} method for the incompressible {N}avier-{S}tokes equations}, Math.
  Comp., 74 (2005), pp.~1067--1095.

\bibitem{Cockburn2007}
{\sc B.~Cockburn, G.~Kanschat, and D.~Schötzau}, {\em A note on discontinuous
  {G}alerkin divergence-free solutions of the {N}avier–{S}tokes equations},
  J. Sci. Comput., 31 (2007), pp.~61--73.

\bibitem{Cockburn2009NS}
{\sc B.~Cockburn, G.~Kanschat, and D.~Schötzau}, {\em An equal-order {DG}
  method for the incompressible {N}avier-{S}tokes equations}, J. Sci. Comput.,
  40 (2009), pp.~188--210.

\bibitem{pietro2010}
{\sc D.~A. {Di P}ietro and A.~Ern}, {\em Discrete functional analysis tools for
  discontinuous {G}alerkin methods with application to the incompressible
  {N}avier–{S}tokes equations}, Math. Comp., 79 (2010), pp.~1303--1330.

\bibitem{Pietro2012}
{\sc D.~A. {Di P}ietro and A.~Ern}, {\em Mathematical Aspects of Discontinuous
  {G}alerkin Methods}, Springer-Verlag Berlin Heidelberg, 2012.

\bibitem{pietro2015}
{\sc D.~A. {Di P}ietro and A.~Ern}, {\em A hybrid high-order locking-free
  method for linear elasticity on general meshes}, Comput. Meth. Appl. Mech.
  Engrg.,  (2015), pp.~1--21.

\bibitem{Dorok1994}
{\sc O.~Dorok, W.~Grambow, and L.~Tobiska}, {\em Aspects of Finite Element
  Discretizations for Solving the {B}oussinesq Approximation of the
  {N}avier-{S}tokes Equations}, Vieweg+Teubner Verlag, Wiesbaden, 1994,
  pp.~50--61.

\bibitem{Franca1992}
{\sc L.~P. Franca and S.~L. Frey}, {\em Stabilized finite element methods:
  {II}. {T}he incompressible {N}avier-{S}tokes equations}, Comput. Methods
  Appl. Mech. Engrg., 99 (1992), pp.~209--233.

\bibitem{Frerichs2020}
{\sc D.~Frerichs and C.~Merdon}, {\em Divergence-preserving reconstructions on
  polygons and a really pressure-robust virtual element method for the {S}tokes
  problem}, IMA J. Numer. Anal.,  (2020), p.~draa073.

\bibitem{Frutos2018}
{\sc J.~d. Frutos, B.~Garc\'{i}a-Archilla, V.~John, and J.~Novo}, {\em Analysis
  of the grad-div stabilization for the time-dependent {N}avier–{S}tokes
  equations with inf-sup stable finite elements}, Adv. Comput. Math.,  (2018),
  pp.~195--225.

\bibitem{Fu2018}
{\sc G.~Fu, Y.~Jin, and W.~Qiu}, {\em Parameter-free superconvergent
  {H}(div)-conforming {HDG} methods for the {B}rinkman equations}, IMA J.
  Numer. Anal., 39 (2018), pp.~957--982.

\bibitem{ghia1982}
{\sc U.~Ghia, K.~N. Ghia, and C.~T. Shin}, {\em High-resolutions for
  incompressible flow using the {N}avier-{S}tokes equations and a multigrid
  method}, J. Comput. Phys., 48 (1982), pp.~387--411.

\bibitem{Girault1986}
{\sc V.~Girault and P.-A. Raviart}, {\em Finite Element Methods for
  {N}avier-{S}tokes Equations}, Springer-Verlag Berlin Heidelberg, 1986.

\bibitem{Guzman13}
{\sc J.~Guzmán and M.~Neilan}, {\em {Conforming and divergence-free {S}tokes
  elements in three dimensions}}, IMA J. Numer. Anal., 34 (2013),
  pp.~1489--1508.

\bibitem{Guzman14}
{\sc J.~Guzmán and M.~Neilan}, {\em Conforming and divergence-free {S}tokes
  elements on general triangular meshes}, Math. Comp., 83 (2014), pp.~15--36.

\bibitem{Han2020}
{\sc Y.~Han and Y.~Hou}, {\em Semirobust analysis of an {H}(div)-conforming
  {DG} method with semi-implicit time-marching for the evolutionary
  incompressible {N}avier–{S}tokes equations}, IMA J. Numer. Anal.,  (2021),
  pp.~1--30.

\bibitem{Karakashian98}
{\sc O.~A. Karakashian and W.~N. Jureidini}, {\em A nonconforming finite
  element method for the stationary {{N}avier}-{{S}tokes} equations}, SIAM J.
  Numer. Anal., 35 (1998), pp.~93--120.

\bibitem{dohyun2020}
{\sc D.~Kim, L.~Zhao, and E.-J. Park}, {\em Staggered {DG} methods for the
  pseudostress-velocity formulation of the {S}tokes equations on general
  meshes}, SIAM J. Sci. Comput., 42 (2020), pp.~A2537--A2560.

\bibitem{Kim13}
{\sc H.~H. Kim, E.~T. Chung, and C.~S. Lee}, {\em A staggered discontinuous
  {G}alerkin method for the {S}tokes system}, SIAM J. Numer. Anal., 51 (2013),
  pp.~3327--3350.

\bibitem{Linke16}
{\sc A.~Linke, G.~Matthies, and L.~Tobiska}, {\em Robust arbitrary order mixed
  finite element methods for the incompressible {S}tokes equations with
  pressure independent velocity errors}, ESIAM: Math. Model. Numer. Anal., 50
  (2016), pp.~289--309.

\bibitem{Mu2020}
{\sc L.~Mu}, {\em Pressure robust weak {G}alerkin finite element methods for
  {S}tokes problems}, SIAM J. Sci. Comput.,  (2020), pp.~B608--B629.

\bibitem{Sander18}
{\sc S.~Rhebergen and G.~N. Wells}, {\em A hybridizable discontinuous
  {G}alerkin method for the {N}avier–{S}tokes equations with pointwise
  divergence-free velocity field}, J. Sci. Comput., 76 (2018), pp.~1484--1501.

\bibitem{Riviere14}
{\sc B.~Riviere and S.~Sardar}, {\em Penalty-free discontinuous {G}alerkin
  methods for incompressible {N}avier–{S}tokes equations}, Math. Models
  Methods Appl., 24 (2014), pp.~1217--1236.

\bibitem{Schroeder2018}
{\sc P.~W. Schroeder, C.~Lehrenfeld, A.~Linke, and G.~Lube}, {\em Towards
  computable flows and robust estimates for inf-sup stable {FEM} applied to the
  time-dependent incompressible {N}avier-{S}tokes equations}, SeMA J.,  (2018),
  pp.~629--653.

\bibitem{taylor1973}
{\sc C.~Taylor and P.~Hood}, {\em A numerical solution of the {N}avier-{S}tokes
  equations using the finite element technique}, Comput. Fluids, 1 (1973),
  pp.~73--100.

\bibitem{Wang2009}
{\sc J.~Wang, Y.~Wang, and X.~Ye}, {\em A robust numerical method for {S}tokes
  equations based on divergence-free {H}(div) finite element methods}, SIAM J.
  Sci. Comput., 31 (2009), pp.~2784--2802.

\bibitem{Wang07}
{\sc J.~Wang and X.~Ye}, {\em New finite element methods in computational fluid
  dynamics by {H}(div) elements}, SIAM J. Numer. Anal., 45 (2007),
  pp.~1269--1286.

\bibitem{Wang14}
{\sc J.~Wang and X.~Ye}, {\em A weak {G}alerkin mixed finite element method for
  second order elliptic problems}, Math. Comp., 241 (2013), pp.~103--115.

\bibitem{Zhao2020c}
{\sc L.~Zhao, E.~Chung, and M.~F. Lam}, {\em A new staggered {DG} method for
  the {B}rinkman problem robust in the {D}arcy and {S}tokes limits}, Comput.
  Methods Appl. Mech. Engrg., 364 (2020), p.~112986.

\bibitem{Zhao2021}
{\sc L.~Zhao, E.~Chung, E.-J. Park, and G.~Zhou}, {\em Staggered {DG} method
  for coupling of the {S}tokes and {D}arcy--{F}orchheimer problems}, SIAM J.
  Numer. Anal., 59 (2021), pp.~1--31.

\bibitem{zhao2018}
{\sc L.~Zhao and E.-J. Park}, {\em A staggered discontinuous {G}alerkin method
  of minimal dimension on quadrilateral and polygonal meshes}, SIAM J. Sci.
  Comput., 40 (2018), pp.~A2543--A2567.

\bibitem{Zhao2020b}
{\sc L.~Zhao and E.-J. Park}, {\em A lowest-order staggered {DG} method for the
  coupled {S}tokes–{D}arcy problem}, IMA J. Numer. Anal., 40 (2020),
  pp.~2871--2897.

\bibitem{Zhao2020e}
{\sc L.~Zhao and E.-J. Park}, {\em A new hybrid staggered discontinuous
  {G}alerkin method on general meshes}, J. Sci. Comput., 82 (2020), pp.~1--33.

\bibitem{Zhao2020d}
{\sc L.~Zhao and E.-J. Park}, {\em A staggered cell-centered {DG} method for
  linear elasticity on polygonal meshes}, SIAM J. on Sci. Comput., 42 (2020),
  pp.~A2158--A2181.

\bibitem{Zhao2020arxiv}
{\sc L.~Zhao, E.-J. Park, and E.~Chung}, {\em A pressure robust staggered
  discontinuous {G}alerkin method for the {S}tokes equations}, arXiv preprint,
  (2020), \url{https://doi.org/arXiv:2007.00298}.

\bibitem{Zhao2020a}
{\sc L.~Zhao, E.-J. Park, and E.~Chung}, {\em Staggered discontinuous
  {G}alerkin methods for the {H}elmholtz equation with large wave number},
  Comput. Math. Appl., 80 (2020), pp.~2676--2690.

\bibitem{Zhao2019}
{\sc L.~Zhao, E.-J. Park, and D.~Shin}, {\em A staggered {DG} method of minimal
  dimension for the {S}tokes equations on general meshes}, Comput. Methods
  Appl. Mech. Engrg., 345 (2019), pp.~854--875.

\end{thebibliography}


\end{document}